\documentclass[11pt,twoside]{article}

\usepackage{amsfonts,amsmath,amssymb,amsthm}
\usepackage{fullpage}
\usepackage{graphics,graphicx}
\usepackage{cases}

\setlength{\textwidth}{\paperwidth}
\addtolength{\textwidth}{-6cm}
\setlength{\textheight}{\paperheight}
\addtolength{\textheight}{-4cm}
\addtolength{\textheight}{-1.1\headheight}
\addtolength{\textheight}{-\headsep}
\addtolength{\textheight}{-\footskip}
\setlength{\oddsidemargin}{0.5cm}
\setlength{\evensidemargin}{0.5cm}

% WIDEBAR COMMAND
\newlength{\widebarargwidth}
\newlength{\widebarargheight}
\newlength{\widebarargdepth}
\DeclareRobustCommand{\widebar}[1]{%
  \settowidth{\widebarargwidth}{\ensuremath{#1}}%
  \settoheight{\widebarargheight}{\ensuremath{#1}}%
  \settodepth{\widebarargdepth}{\ensuremath{#1}}%
  \addtolength{\widebarargwidth}{-0.3\widebarargheight}%
  \addtolength{\widebarargwidth}{-0.3\widebarargdepth}%
  \makebox[0pt][l]{\hspace{0.3\widebarargheight}%
    \hspace{0.3\widebarargdepth}%
    \addtolength{\widebarargheight}{0.3ex}%
    \rule[\widebarargheight]{0.95\widebarargwidth}{0.1ex}}%
  {#1}}

\usepackage[colorlinks]{hyperref}

\newcommand{\GamHat}{\ensuremath{\widehat{\Gamma}}}
\newcommand{\gamhat}{\ensuremath{\widehat{\gamma}}}
\newcommand{\betahat}{\ensuremath{\widehat{\beta}}}
\newcommand{\betastar}{\ensuremath{\beta^*}}
\newcommand{\zhat}{\ensuremath{\widehat{z}}}
\newcommand{\Loss}{\ensuremath{\mathcal{L}}}

\newcommand{\nutil}{\ensuremath{\widetilde{\nu}}}
\newcommand{\ball}{\ensuremath{\mathbb{B}}}

\newcommand{\betamin}{\ensuremath{\betastar_{\min}}}
\newcommand{\betatil}{\ensuremath{\widetilde{\beta}}}

\newcommand{\ztil}{\ensuremath{\widetilde{z}}}

\newcommand{\Lossbar}{\ensuremath{\widebar{\Loss}}}

\newcommand{\Sigmahat}{\ensuremath{\widehat{\Sigma}}}
\newcommand{\Thetahat}{\ensuremath{\widehat{\Theta}}}

\newcommand{\Thetastar}{\ensuremath{\Theta^*}}

\newcommand{\tr}{\ensuremath{\operatorname{trace}}}
\newcommand{\VEC}{\ensuremath{\operatorname{vec}}}
\newcommand{\Sigmastar}{\ensuremath{\Sigma^*}}

\newcommand{\real}{\ensuremath{\mathbb{R}}}
\newcommand{\defn}{\ensuremath{:  =}}
\newcommand{\inprod}[2]{\ensuremath{\langle #1 , \, #2 \rangle}}
\DeclareMathOperator{\sign}{sign}
\newcommand{\supp}{\ensuremath{\operatorname{supp}}}

\newcommand{\order}{{\mathcal{O}}}
\newcommand{\Gamstar}{\ensuremath{\Gamma^*}}
\newcommand{\Deltastar}{\ensuremath{\Delta^*}}
\newcommand{\scriptT}{\ensuremath{\mathcal{T}}}
\newcommand{\Qhat}{\ensuremath{\widehat{Q}}}

\newcommand{\condind}{\ensuremath{\perp\!\!\!\perp}}
\newcommand{\opnorm}[1]{\left|\!\left|\!\left|{#1}\right|\!\right|\!\right|}
\newcommand{\Cov}{\ensuremath{\operatorname{Cov}}}

\newcommand{\E}{\ensuremath{\mathbb{E}}}
\newcommand{\mprob}{\ensuremath{\mathbb{P}}}
\newcommand{\boracle}{\ensuremath{\betahat^{\mathcal{O}}}}

\newcommand{\numobs}{\ensuremath{n}}
\newcommand{\pdim}{\ensuremath{p}}

\newtheorem{lem*}{Lemma}
\newtheorem{thm*}{Theorem}
\newtheorem{alg*}{Algorithm}
\newtheorem{cor*}{Corollary}
\newtheorem{prop*}{Proposition}

\newtheorem{assumption}{Assumption}

\bibliographystyle{plain}

\usepackage{color}

\newcommand{\kdim}{\ensuremath{k}}

\newcommand{\CLOW}{c_\ell} \newcommand{\CUP}{c_u}
\newcommand{\CFINAL}{c_3}

\newcommand{\LossSn}{\ensuremath{(\Loss_\numobs)\Big|_S}}
\newcommand{\LossSnbar}{\ensuremath{(\widebar{\Loss}_\numobs)\Big|_S}}
\newcommand{\specvec}{\nu} 
\newcommand{\Term}{\ensuremath{T}}

%%% New version of \caption puts things in smaller type, single-spaced 
%%% and indents them to set them off more from the text.
\makeatletter
\long\def\@makecaption#1#2{
        \vskip 0.8ex
        \setbox\@tempboxa\hbox{\small {\bf #1:} #2}
        \parindent 1.5em  %% How can we use the global value of this???
        \dimen0=\hsize
        \advance\dimen0 by -3em
        \ifdim \wd\@tempboxa >\dimen0
                \hbox to \hsize{
                        \parindent 0em
                        \hfil 
                        \parbox{\dimen0}{\def\baselinestretch{0.96}\small
                                {\bf #1.} #2
                                %%\unhbox\@tempboxa
                                } 
                        \hfil}
        \else \hbox to \hsize{\hfil \box\@tempboxa \hfil}
        \fi
        }
\makeatother

%%%%%%%%%%%%%%%%%%%%%%%%%%%%%%%%%%%%%%%%%%%%%%%%%%%%%%%%%%%%%%%%%%%%%%%%%

\begin{document}

\begin{center}
	
	{\bf{\LARGE{Support recovery without incoherence: \\A case for
              nonconvex regularization}}}
	\vspace*{.25in}

	\begin{tabular}{ccc}
	{\large{Po-Ling Loh$^1$}} & \hspace*{1.5in} & {\large{Martin
            J. Wainwright$^2$}}
        \\ {\large{\texttt{loh@wharton.upenn.edu}}} & &
           {\large{\texttt{wainwrig@berkeley.edu}}}
    
%	\end{tabular}

	\vspace*{.2in} \\

%	\begin{tabular}{ccc}
	Department of Statistics$^1$ & \hspace{1.25in} & Department of
        Statistics$^2$ \\ The Wharton School && Department of EECS
        \\ University of Pennsylvania && UC Berkeley \\ Philadelphia,
        PA 19104 & & Berkeley, CA 94720
	\end{tabular}

\vspace*{.2in}

December 2014 

\vspace*{.2in}

\end{center}

\begin{abstract}
We demonstrate that the primal-dual witness proof method may be used
to establish variable selection consistency and $\ell_\infty$-bounds
for sparse regression problems, even when the loss function and/or
regularizer are nonconvex.  Using this method, we derive two theorems
concerning support recovery and $\ell_\infty$-guarantees for the
regression estimator in a general setting. Our results provide
rigorous theoretical justification for the use of nonconvex
regularization: For certain nonconvex regularizers with vanishing
derivative away from the origin, support recovery consistency may be
guaranteed without requiring the typical incoherence conditions
present in $\ell_1$-based methods.  We then derive several corollaries
that illustrate the wide applicability of our method to analyzing composite
objective functions involving losses such as least squares, nonconvex
modified least squares for errors-in-variables linear regression, the
negative log likelihood for generalized linear models, and the
graphical Lasso. We conclude with empirical studies to corroborate our
theoretical predictions.
\end{abstract}

%%%%%%%%%%%%%%%%%%%%%%%%%%%%%%%%%%%%%%%%%%%%%%%%%%%%%%%%%%%%%%%%%%%%%%%%%%

\section{Introduction}

The last two decades have generated a significant body of work
involving convex relaxations of nonconvex problems arising in
high-dimensional sparse regression (e.g., see the
papers~\cite{DonSta89, Tib96, CheEtal98, CandesTao06, BicEtal08,
  Wai09} and references therein).  In broad terms, the underlying goal
is to identify a relatively \emph{sparse} solution from among a set of
candidates that also yields a good fit to the data. A hard sparsity
constraint is most directly encoded in terms of the $\ell_0$-``norm,"
which counts the number of nonzero entries in a vector. However, this
produces a nonconvex optimization problem that may be NP-hard to solve
or even approximate~\cite{NesNem87, Vav95}. As a result, much work has
focused instead on a slightly different problem, where the
$\ell_0$-based constraint is replaced by the \emph{convex}
$\ell_1$-norm.  There is a relatively well-developed theoretical
understanding of the conditions under which such $\ell_1$-relaxations
produce good estimates of the underlying parameter vector (e.g., see
the papers~\cite{BicEtal08, GeeBuh09, NegRavWaiYu12} and references
therein).

Although the $\ell_1$-norm encourages sparsity in the solution,
however, it differs from the $\ell_0$-norm in a crucial aspect:
Whereas the $\ell_0$-norm is constant for any nonzero argument, the
$\ell_1$-norm increases linearly with the absolute value of the
argument. This linear increase leads to a bias in the resulting
$\ell_1$-regularized solution, and noticeably affects the performance
of the estimator in finite-sample settings~\cite{FanLi01, BreHua11,
  MazEtal11}.  Motivated by this deficiency of
$\ell_1$-regularization, several authors have proposed alternative
forms of nonconvex regularization, including the smoothly clipped
absolute deviation (SCAD) penalty~\cite{FanLi01}, minimax concave
penalty (MCP)~\cite{Zha10}, and log-sum penalty
(LSP)~\cite{CanEtal08}. These regularizers may be viewed as a hybrid
of $\ell_0$- and $\ell_1$-regularizers---they resemble the
$\ell_1$-norm in a neighborhood of the origin, but become
(asymptotically) constant at larger values. Although the nonconvexity
of the regularizer causes the overall optimization problem to be
nonconvex, numerous empirical studies have shown that gradient-based
optimization methods, while only guaranteed to find local optima,
often produce estimators with consistently smaller estimation error
than the estimators produced by the convex
$\ell_1$-penalty~\cite{FanLi01, HunLi05, ZouLi08, BreHua11,
  MazEtal11}.

In recent years, several important advances have been made toward
developing a theoretical framework for nonconvex regularization.
Zhang and Zhang~\cite{ZhaZha12} provide results showing that
\emph{global} optima of nonconvex regularized least squares problems
are statistically consistent for the true regression vector, leaving
open the question of how to find such optima efficiently. Fan et
al.~\cite{FanEtal14} show that one step of a local linear
approximation (LLA) algorithm, initialized at a Lasso solution,
results in a local optimum of the nonconvex regularized least squares
problem that satisfies oracle properties; Wang et al.~\cite{WanEtal13}
establish similar guarantees for the output of a particular
path-following algorithm.  Our own past work~\cite{LohWai13} provides
a general set of sufficient conditions under which all stationary
points of the nonconvex regularized problem are guaranteed to lie
within statistical precision of the true parameter, which
substantially simplifies the optimization problem to one of finding
stationary points.  Our work also establishes bounds on the
$\ell_2$-norm and prediction error that agree with the well-known
bounds for the convex $\ell_1$-regularizer, up to constant factors.

Despite these advances, however, an important question has remained:
Are stationary points of such nonconvex problems also consistent for
variable selection? In other words, does the support set of a
stationary point agree with the support of the true regression vector,
with high probability, and at what rate does the error probability
tend to zero?  In addition to providing a natural venue for
establishing bounds on the $\ell_\infty$-error, support recovery
results furnish a much better understanding of when stationary points
of nonconvex objectives are actually unique.  For convex objectives,
various standard proof techniques for variable selection consistency
now exist, including approaches via Karush-Kuhn-Tucker optimality
conditions and primal-dual witness arguments
(e.g.,~\cite{Zhao06,Wai09, LeeEtal14}).  However, these arguments, as
previously stated, have relied crucially on convexity of both the loss
and regularizer.

The first main contribution of our paper is to show how the
primal-dual witness technique may be modified and extended to a
certain class of \emph{nonconvex} problems.  Our proof hinges on the
notion of generalized gradients from nonsmooth analysis~\cite{Cla75},
and optimization-theoretic results on norm-regularized, smooth, but
possibly nonconvex functions~\cite{FleWat80}. Our main result is to
establish sufficient conditions for variable selection consistency
when both the loss and regularizer are allowed to be nonconvex,
provided the loss function satisfies a form of restricted strong
convexity and the regularizer satisfies suitable mild
conditions. These assumptions are similar to the conditions required
in our earlier work on $\ell_1$- and
$\ell_2$-consistency~\cite{LohWai13}, with an additional assumption on the minimum signal strength that allows us to derive stronger support recovery guarantees. Remarkably, our results
demonstrate that for a certain class of regularizers---including the
SCAD and MCP regularizers---we may dispense with the usual incoherence
conditions required by $\ell_1$-based methods, and still guarantee
support recovery consistency for all stationary points of the
resulting nonconvex program.  This provides a strong theoretical reason for why certain nonconvex
regularizers might be favored over their convex counterparts. In
addition, we establish that for the same class of nonconvex regularizers, the unique
stationary point is in fact equal to the oracle solution.

Several other authors have mentioned the potential for nonconvex
regularizers to deliver estimation and support recovery guarantees
under weaker assumptions than the $\ell_1$-norm. The same line of work
introducing nonconvex penalties such as the SCAD and MCP and
developing subsequent theory~\cite{FanLi01, FanPen04, ZouLi08, Zha12,
  FanEtal14} demonstrates that in the absence of incoherence conditions, nonconvex regularized
problems possess local optima that are statistically consistent and
satisfy an oracle property. Since nonconvex programs may
have multiple solutions, however, these papers have focused on
establishing theoretical guarantees for the output of \emph{specific} optimization algorithms. More
recently, Wang et al.~\cite{WanEtal13} propose a path-following
homotopy algorithm for obtaining solutions to nonconvex regularized
$M$-estimators, and show that iterates of the homotopy algorithm
converge at a linear rate to the oracle solution of the $M$-estimation
problem.  In contrast to theory of this type---applicable to a
particular algorithm---the theory in our paper is purely statistical
and does \emph{not} concern iterates of a particular optimization
algorithm. Indeed, the novelty of our theoretical results is that they
establish support recovery consistency for \emph{all} stationary
points and, moreover, shed light on situations where such stationary
points are actually unique. Finally, Pan and Zhang~\cite{PanZha15}
provide results showing that under restricted eigenvalue assumptions
on the design matrix that are weaker than the standard restricted
eigenvalue conditions, a certain class of nonconvex regularizers yield
estimates that are consistent in $\ell_2$-norm. They provide
bounds on the sparsity of approximate global and approximate sparse
(AGAS) solutions, a notion also studied in earlier
work~\cite{ZhaZha12}.  However, their theoretical development stops
short of providing conditions for recovering the exact support of the
underlying regression vector.

The remainder of our paper is organized as follows: In
Section~\ref{SecBackground}, we provide basic background material on
regularized $M$-estimators and set up notation for the paper.  We also
outline the primal-dual witness proof method.
Section~\ref{SecMain} is devoted to the statements of our main results
concerning support recovery and $\ell_\infty$-bounds, followed by
corollaries that specialize our results to linear regression,
generalized linear models, and the graphical Lasso. In each case, we
contrast our conditions for nonconvex regularizers to those required
by convex regularizers and discuss the implications of our
significantly weaker assumptions. We provide proofs of our main
results in Appendices~\ref{SecSuppRecovery} and~\ref{SecThmEllInf}, with supporting results and more
technical lemmas contained in later appendices. Finally,
Section~\ref{SecSims} contains convergence guarantees for the
composite gradient descent algorithm and a variety of illustrative
simulations that confirm our theoretical results.

\paragraph{Notation:}
For functions $f(n)$ and $g(n)$, we write $f(n) \precsim g(n)$ to mean
that \mbox{$f(n) \le c g(n)$} for some universal constant $c \in (0,
\infty)$, and similarly, $f(n) \succsim g(n)$ when \mbox{$f(n) \ge c'
  g(n)$} for some universal constant $c' \in (0, \infty)$. We write
$f(n) \asymp g(n)$ when $f(n) \precsim g(n)$ and $f(n) \succsim g(n)$
hold simultaneously. For a vector $v \in \real^p$ and a subset $S
\subseteq \{1, \dots, p\}$, we write $v_S \in \real^S$ to denote the
vector $v$ restricted to $S$. For a matrix $M$, we write
$\opnorm{M}_2$ and $\opnorm{M}_F$ to denote the spectral and Frobenius
norms, respectively, and write $\opnorm{M}_{\max} \defn \max_{i,j}
|m_{ij}|$ to denote the elementwise $\ell_\infty$-norm of $M$. For a
function $h: \real^p \rightarrow \real$, we write $\nabla h$ to denote
a gradient or subgradient, if it exists. Finally, for $q, r > 0$, we
write $\ball_q(r)$ to denote the $\ell_q$-ball of radius $r$ centered
around 0.

%%%%%%%%%%%%%%%%%%%%%%%%%%%%%%%%%%%%%%%%%%%%%%%%%%%%%%%%%%%%%%%%%%%%%%%%%

\section{Problem formulation}
\label{SecBackground}

In this section, we briefly review the theory of regularized
$M$-estimators. We also outline the primal-dual witness proof
technique that underlies our proofs of variable consistency for
nonconvex problems.

%%%%%%%%%%%%%%%%%%%%%%%%%%%%%%%%%%%%%%%%%%%%%%%%%%%%%%%%%%%%%%%%%%%%%%%%

\subsection{Regularized $M$-estimators}

The analysis of this paper applies to regularized $M$-estimators of
the form
\begin{equation}
\label{EqnMEst}
\betahat \in \arg\min_{\|\beta\|_1 \le R, \; \beta \in \Omega} \left
\{\Loss_\numobs(\beta) + \rho_\lambda(\beta)\right\},
\end{equation}
where $\Loss_\numobs$ denotes the empirical loss function and
$\rho_\lambda$ denotes the penalty function, both assumed to be
continuous.  In our framework, both of these functions are allowed to
be nonconvex, but the theory applies a fortiori when only one function
is convex.  The prototypical example of a loss function to keep in
mind is the least squares objective, $\Loss_\numobs(\beta) =
\frac{1}{2 \numobs} \|y - X \beta\|_2^2$.  We include the side
constraint, $\|\beta\|_1 \le R$, in order to ensure that a global
minimum $\betahat$ exists.\footnote{In the sequel, we will give examples of
  nonconvex loss functions for which the global minimum fails to exist
  without such a side constraint (cf.\ Section~\ref{SecLoss} below).}
For modeling purposes, we have also allowed for an additional
constraint, $\beta \in \Omega$, where $\Omega$ is an open convex set;
note that we may take $\Omega = \real^\pdim$ when this extra
constraint is not needed.

The analysis of this paper is restricted to the class of
\emph{coordinate-separable regularizers}, meaning that $\rho_\lambda$
is expressible as the sum
\begin{equation}
\label{EqnRhoDecomp}
\rho_\lambda(\beta) = \sum_{j=1}^\pdim \rho_\lambda(\beta_j).
\end{equation}
Here, we have engaged in some minor abuse of notation; the
functions \mbox{$\rho_\lambda: \real \rightarrow \real$} appearing on
the right-hand side of equation~\eqref{EqnRhoDecomp} are univariate
functions acting upon each coordinate.  Our results are readily
extended to the inhomogenous case, where different coordinates have
different regularizers $\rho^j_\lambda$, but we restrict ourselves to
the homogeneous case in order to simplify our discussion.

From a statistical perspective, the purpose of solving the
optimization problem~\eqref{EqnMEst} is to estimate the vector
$\betastar \in \real^\pdim$ that minimizes the expected loss function:
\begin{align}
\label{EqnBetaPop}
\betastar & \defn \arg \min_{\beta \in \Omega}
\E[\Loss_\numobs(\beta)],
\end{align}
where we assume that $\betastar$ is unique and independent of the sample
size. Our goal is to develop sufficient conditions under which a minimizer
$\betahat$ of the composite objective~\eqref{EqnMEst} is a
consistent estimator for $\betastar$. Consequently, we will always
choose $R \ge \|\betastar\|_1$, which ensures that $\betastar$ is a
feasible point.

%%%%%%%%%%%%%%%%%%%%%%%%%%%%%%%%%%%%%%%%%%%%%%%%%%%%%%%%%%%%%%%%%%%%%%%%%%

\subsection{Class of regularizers}
\label{SecPenalties}

In this paper, we study the class of regularizers $\rho_\lambda: \real
\rightarrow \real$ that are amenable in the following sense.

\paragraph{Amenable regularizers:}  For a parameter $\mu \geq 0$, we say 
that $\rho_\lambda$ is $\mu$-amenable if:
\begin{enumerate}
\item[(i)] The function $t \mapsto \rho_\lambda(t)$ is symmetric
  around zero (i.e., $\rho_\lambda(t) = \rho_\lambda(-t)$ for all
  $t$), and $\rho_\lambda(0) = 0$.
\item[(ii)] The function $t \mapsto \rho_\lambda(t)$ is nondecreasing
  on $\real^+$.
\item[(iii)] The function $t \mapsto \frac{\rho_\lambda(t)}{t}$ is
  nonincreasing on $\real^+$.
\item[(iv)] The function $t \mapsto \rho_\lambda(t)$ is differentiable,
  for $t \neq 0$.
\item[(v)] The function $t \mapsto \rho_\lambda(t) + \frac{\mu}{2}
  t^2$ is convex, for some $\mu > 0$.
\item[(vi)] $\lim \limits_{t \rightarrow 0^+}\rho_\lambda'(t) =
  \lambda$.
\end{enumerate}
We say that $\rho_\lambda$ is $(\mu, \gamma)$-amenable if, in addition:
\begin{enumerate}
\item[(vii)] There is some scalar $\gamma \in (0, \infty)$ such that
  $\rho_\lambda'(t) = 0$, for all $t \ge \gamma \lambda$.
\end{enumerate}
Conditions (vi) and (vii) are also known as the \emph{selection} and
\emph{unbiasedness} properties, respectively, and the MCP
regularizer~\cite{Zha12} described below minimizes the maximum
concavity of $\rho$ subject to (vi)--(vii). Note that the usual
$\ell_1$-penalty $\rho_\lambda(t) = \lambda |t|$ is $0$-amenable, but
it is \emph{not} $(0, \gamma)$-amenable, for any $\gamma < \infty$.
The notion of $\mu$-amenability was also used in our past work on
$\ell_2$-bounds for nonconvex regularizers~\cite{LohWai13}, with the
exception of the selection property (vi).  Since the goal of the
current paper is to obtain \emph{stronger} conclusions, in terms of
variable selection and $\ell_\infty$-bounds, we will also require
$\rho_\lambda$ to satisfy the selection and unbiasedness properties.

Note that if we define $q_\lambda(t) \defn \lambda |t| -
\rho_\lambda(t)$, the conditions (iv) and (vi) together imply that
$q_\lambda$ is everywhere differentiable. Furthermore, if
$\rho_\lambda$ is $(\mu, \gamma)$-amenable, we have $q'_\lambda(t) =
\lambda \cdot \sign(t)$, \mbox{for all $|t| \geq \gamma \lambda$.} In
Appendix~\ref{AppAmenable}, we provide some other useful results concerning amenable regularizers. \\

Many popular regularizers are either $\mu$-amenable or $(\mu,
\gamma)$-amenable.  Let us consider a few examples to illustrate.

\paragraph{Smoothly clipped absolute deviation (SCAD) penalty:}  This 
penalty, due to Fan and Li~\cite{FanLi01}, takes the form
\begin{align}
\label{EqnSCADdefn}
\rho_\lambda(t) & \defn \begin{cases} \lambda |t|, & \mbox{for $|t|
    \le \lambda$,} \\
-\frac{t^2 - 2a\lambda |t| + \lambda^2}{2(a-1)}, & \mbox{for $\lambda <
  |t| \le a \lambda$,} \\
\frac{(a+1)\lambda^2}{2}, & \mbox{for $|t| > a\lambda$},
\end{cases}
\end{align}
where $a > 2$ is a fixed parameter. A straightforward calculation show
that the SCAD penalty is $(\mu, \gamma)$-amenable, with $\mu =
\frac{1}{a-1}$ and $\gamma = a$. \\

\paragraph{Minimax concave penalty (MCP):} This penalty, due to 
Zhang~\cite{Zha12}, takes the form
\begin{align}
\label{EqnMCPdefn}
\rho_\lambda(t) & \defn \sign(t) \, \lambda \cdot \int_0^{|t|} \left(1
- \frac{z}{\lambda b}\right)_+ dz,
\end{align}
where $b > 0$ is a fixed parameter. The MCP regularizer is $(\mu,
\gamma)$-amenable, with $\mu = \frac{1}{b}$ and $\gamma =
b$. \\

\noindent Finally, let us consider some examples of penalties that are
$\mu$-amenable, but \emph{not} $(\mu, \gamma)$-amenable, for any
$\gamma < \infty$.

\paragraph{Standard Lasso penalty:}  As mentioned previously, the
$\ell_1$-penalty $\rho_\lambda(t) = \lambda |t|$ is $0$-amenable, but not
$(0, \gamma)$-amenable, for any $\gamma < \infty$.

\paragraph{Log-sum penalty (LSP):} This penalty, studied in 
past work~\cite{CanEtal08}, takes the form
\begin{equation}
\label{EqnLSPdefn}
\rho_\lambda(t) = \log(1 + \lambda |t|).
\end{equation}
For $t > 0$, we have $\rho_\lambda'(t) = \frac{\lambda}{1+\lambda t}$, and
$\rho_\lambda''(t) = \frac{-\lambda^2}{(1+\lambda t)^2}$.  In
particular, the LSP regularizer is $\lambda^2$-amenable, but not $(\lambda^2, \gamma)$-amenable, for any $\gamma <
\infty$.

%%%%%%%%%%%%%%%%%%%%%%%%%%%%%%%%%%%%%%%%%%%%%%%%%%%%%%%%%%%%%%%%%%%%%

\subsection{Nonconvexity and restricted strong convexity}
\label{SecLoss}

Next, we consider some examples of the types of nonconvex loss
functions treated in this paper.  At a high level, we consider loss
functions that are differentiable and satisfy a form of
\emph{restricted strong convexity}, as used in large body of past work
on analysis of high-dimensional sparse $M$-estimators
(e.g.,~\cite{AgaEtal12, BicEtal08,GeeBuh09,LohWai13, NegRavWaiYu12}).
In order to provide intuition before stating the formal definition, note that
for any convex and differentiable function $f: \real^\pdim \rightarrow
\real$ that is globally convex and locally strongly convex around a
point $\beta \in \real^\pdim$, there exists a constant $\alpha > 0$ such
that
\begin{align}
\label{EqnChocoPie}
 \inprod{\nabla f(\beta + \Delta) - \nabla f(\beta)}{\Delta} & \geq
 \alpha \cdot \min \{ \|\Delta\|_2, \|\Delta\|_2^2 \},
\end{align}
for all $\Delta \in \real^\pdim$.  The notion of restricted strong
convexity (with respect to the $\ell_1$-norm) weakens this requirement
by adding a tolerance term that penalizes non-sparse vectors.  In
particular, for positive constants $\{(\alpha_j, \tau_j)\}_{j=1}^2$, we have the
following definition:
\paragraph{Restricted strong convexity:}
Given any pair of vectors $\beta, \Delta \in \real^p$, the loss
function $\Loss_\numobs$ satisfies an $(\alpha, \tau)$-RSC condition,
if:
\begin{subnumcases}{
\label{EqnRSC}
 \inprod{\nabla \Loss_\numobs(\beta + \Delta) - \nabla
   \Loss_\numobs(\beta)}{\Delta} \geq}
\label{EqnLocalRSC}
\alpha_1 \|\Delta\|_2^2 - \tau_1 \frac{\log p}{n} \|\Delta\|_1^2,
\quad \quad \mbox{for all $\|\Delta\|_2 \leq 1$,} \\
\label{EqnL2Bd}
\alpha_2 \|\Delta\|_2 - \tau_2 \sqrt{\frac{\log p}{n}} \|\Delta\|_1,
\quad \mbox{for all $\|\Delta\|_2 \geq 1$,}
\end{subnumcases}
where $(\alpha_1, \alpha_2)$ are strictly positive constants, and
$(\tau_1, \tau_2)$ are nonnegative constants. \\

As noted in inequality~\eqref{EqnChocoPie}, any locally strongly convex function that is also globally convex satisfies the RSC
condition with tolerance parameters $\tau_1 = \tau_2 = 0$.  For
$\tau_1, \tau_2 > 0$, the RSC
condition imposes strong curvature only in certain directions of
$p$-dimensional space---namely, those nonzero directions $\Delta \in
\real^\pdim$ for which the ratio
$\frac{\|\Delta\|_1}{\|\Delta\|_2}$ is relatively small; i.e., less than a constant multiple of
$\sqrt{\frac{\numobs}{\log \pdim}}$.  Note that for any $\kdim$-sparse
vector $\Delta$, we have $\frac{\|\Delta\|_1}{\|\Delta\|_2} \leq
\sqrt{\kdim}$, so that the RSC definition guarantees a form of strong
convexity for all $\kdim$-sparse vectors when $\numobs \succsim
\kdim \log \pdim$. \\

A line of past work
(e.g.,~\cite{RasEtAl10,RudZho13,NegRavWaiYu12,LohWai13}) shows that
the RSC condition holds, with high probability, for many types of convex
and nonconvex objectives arising in statistical estimation problems. We now consider a few illustrative examples.

\paragraph{Standard linear regression:}  
Consider the standard linear regression model, in which we observe
i.i.d.\ pairs $(x_i, y_i) \in \real^\pdim \times \real$, linked by the
linear model
\begin{align*}
y_i = x_i^T \betastar + \epsilon_i, \qquad \mbox{for $i = 1, \ldots,
  \numobs$,}
\end{align*}
and the goal is to estimate $\betastar \in \real^p$.  A standard loss
function in this case is the least squares function
$\Loss_\numobs(\beta) = \frac{1}{2 \numobs} \|y - X \beta\|_2^2$,
where $y \in \real^\numobs$ is the vector of responses and $X \in
\real^{\numobs \times \pdim}$ is the design matrix with $x_i \in
\real^\pdim$ as its $i^{th}$ row.  In this special case, for any
$\beta, \Delta \in \real^\pdim$, we have
\begin{align*}
\inprod{\nabla \Loss_\numobs(\beta + \Delta) - \nabla
  \Loss_\numobs(\beta)}{\Delta} & = \Delta^T \Big(\frac{X^T
  X}{\numobs} \Big) \Delta \, = \; \frac{\|X \Delta\|_2^2}{\numobs}.
\end{align*}
Consequently, for the least squares loss function, the RSC condition
is essentially equivalent to lower-bounding sparse restricted
eigenvalues~\cite{GeeBuh09, BicEtal08}.

%%%%%%%%%%%%%%%%%%%%%%%%%%%%%%%%%%%%%%%%%%%%%%%%%%%%%%%%%%%%%%%%%%%%%%%%%

\paragraph{Linear regression with errors in covariates:}

Let us now turn to a simple extension of the standard linear
regression model.  Suppose that instead of observing the covariates
$x_i \in \real^\pdim$ directly, we observe the corrupted vectors $z_i
= x_i + w_i$, where $w_i \in \real^\pdim$ is some type of noise
vector.  This setup is a particular instantiation of a more general
errors-in-variables model for linear regression.  The standard Lasso
estimate (applied to the observed pairs $\{(z_i,
y_i)\}_{i=1}^\numobs$) is inconsistent in this setting.

As studied previously~\cite{LohWai11a}, it is natural
to consider a corrected version of the Lasso, which we state in terms
of the quadratic objective,
\begin{equation}
\label{EqnLossLinear}
\Loss_\numobs(\beta) = \frac{1}{2} \beta^T \GamHat \beta - \gamhat^T
\beta.
\end{equation}
Our past work~\cite{LohWai11a} shows that as long as $(\GamHat,
\gamhat)$ are unbiased estimates of $(\Sigma_x, \Sigma_x \betastar)$,
any global minimizer $\betahat$ of the appropriately regularized problem~\eqref{EqnMEst} is a consistent estimate for $\betastar$.  In the
additive corruption model described in the previous paragraph, a natural choice for
the pair $(\GamHat, \gamhat)$ is given by
\begin{equation*}
(\GamHat, \gamhat) = \left(\frac{Z^TZ}{n} - \Sigma_w, \; \frac{Z^T y}{n}\right),
\end{equation*}
where the covariance matrix $\Sigma_w = \Cov(w_i)$ is assumed to be
known.  However, in the high-dimensional setting ($\numobs \ll
\pdim$), the random matrix $\GamHat$ is \emph{not} positive semidefinite,
so the quadratic objective function~\eqref{EqnLossLinear} is
nonconvex. (This is also a concrete instance where the
objective function~\eqref{EqnMEst} requires the constraint
$\|\beta\|_1 \le R$ in order to be bounded below.)  Nonetheless, our
past work~\cite{LohWai11a,LohWai13} shows that under certain tail
conditions on the covariates and noise vectors, the loss
function~\eqref{EqnLossLinear} does satisfy a form of restricted
strong convexity.

%%%%%%%%%%%%%%%%%%%%%%%%%%%%%%%%%%%%%%%%%%%%%%%%%%%%%%%%%%%%%%%%%%%%%%%%%%%

\paragraph{Generalized linear models:}

Moving beyond standard linear regression, suppose the pairs
$\{(x_i, y_i)\}_{i=1}^n$ are drawn from a generalized linear model
(GLM). Recall that the conditional distribution for a GLM takes the
form
\begin{equation}
\label{EqnCondGLM}
\mprob(y_i \mid x_i, \beta, \sigma) = \exp\left(\frac{y_i x_i^T \beta
  - \psi(x_i^T \beta)}{c(\sigma)}\right),
\end{equation}
where $\sigma > 0$ is a scale parameter and $\psi$ is the cumulant
function. The loss function corresponding to the negative log
likelihood is given by
\begin{equation}
\label{EqnLossGLM}
\Loss_\numobs(\beta) = \frac{1}{n} \sum_{i=1}^n \left(\psi(x_i^T
\beta) - y_i x_i^T \beta \right),
\end{equation}
and it is easy to see that equation~\eqref{EqnLossGLM} reduces to
equation~\eqref{EqnLossLinear} with the choice $\psi(t) =
\frac{t^2}{2}$. Using properties of exponential families, we may also
check that equation~\eqref{EqnBetaPop} holds.  Negahban et
al.~\cite{NegRavWaiYu12} show that a form of restricted strong
convexity holds for a broad class of generalized linear models.

\paragraph{Graphical Lasso:}
Now suppose that we observe a sequence $\{x_i\}_{i=1}^n \subseteq
\real^p$ of $\pdim$-dimensional random vectors with mean 0.
Our goal is to estimate the inverse covariance matrix $\Thetastar
\defn \Sigma_x^{-1}$, assumed to be relatively sparse.  In the
Gaussian case, sparse inverse covariance matrices arise from imposing
a Markovian structure on the random vector~\cite{Speed86}.  Letting
$\Sigmahat \defn \frac{X^TX}{n}$ denote the sample covariance matrix,
consider the loss function
\begin{equation}
\label{EqnGlassoLn}
\Loss_\numobs(\Theta) = \tr(\Sigmahat \Theta) - \log \det(\Theta),
\end{equation}
which we refer to as the graphical Lasso loss. Taking derivatives, it
is easy to check that \mbox{$\Thetastar = \arg\min_{\Theta}
  \E[\Loss_\numobs(\Theta)]$,} which verifies the population-level
condition~\eqref{EqnBetaPop}.  As we show in Section~\ref{SecGLasso},
the graphical Lasso loss is locally strongly convex, so it
satisfies the restricted strong convexity condition with $\tau_1 =
\tau_2 = 0$.

%%%%%%%%%%%%%%%%%%%%%%%%%%%%%%%%%%%%%%%%%%%%%%%%%%%%%%%%%%%%%%%%%%%%%%%%

\subsection{Primal-dual witness proof technique}
\label{SecPDW}

We now outline the main steps of the primal-dual witness (PDW) proof
technique, which we will use to establish support recovery and
$\ell_\infty$-bounds for the program~\eqref{EqnMEst}. Such a technique
was previously applied only in situations where $\Loss_\numobs$ is
convex and $\rho_\lambda$ is the $\ell_1$-penalty, but we show that
this machinery may be extended via a careful analysis of local optima
of norm-regularized functions based on generalized gradients.

As stated in Theorem~\ref{ThmSuppRecovery} below, the success of the PDW construction guarantees that stationary points of the nonconvex objective are consistent for variable selection consistency---in fact, they are unique. Recall that $\betatil \in \real^p$ is a \emph{stationary point} of the optimization program~\eqref{EqnMEst} if we have \mbox{$\inprod{\nabla
    \Loss_\numobs(\betatil) + \nabla \rho_\lambda(\betatil)}{\beta -
    \betatil} \ge 0$}, for all $\beta$ in the feasible
region~\cite{Ber99}. Due to the possible nondifferentiability of
$\rho_\lambda$ at 0, we abuse notation slightly and denote
\begin{equation*}
\inprod{\nabla \rho_\lambda(\betatil)}{\beta - \betatil} = \lim_{t
  \rightarrow 0^+} \inprod{\nabla \rho_\lambda(\betatil + t(\beta -
  \betatil))}{\beta - \betatil}
\end{equation*}
(see, e.g., Clarke~\cite{Cla75} for a more comprehensive treatment of
such generalized gradients). The set of stationary points includes all
local/global minima of the program~\eqref{EqnMEst}, as well as any
interior local maxima.

The key steps of the PDW argument are as follows:
\begin{enumerate}
\item[(i)] Optimize the \emph{restricted program}
\begin{equation}
\label{EqnRestrictedM}
\betahat_S \in \arg\min_{\beta \in \real^S: \; \|\beta\|_1 \le R, \;
  \beta \in \Omega} \left\{\Loss_\numobs(\beta) +
\rho_\lambda(\beta)\right\},
\end{equation}
where we enforce the additional constraint that $\supp(\betahat_S)
\subseteq \supp(\betastar) \defn S$. Establish that $\|\betahat_S\|_1
< R$; i.e., $\betahat_S$ is an interior point of the feasible set.
\item[(ii)] Define $\zhat_S \in \partial \|\betahat_S\|_1$, and choose
  $\zhat_{S^c}$ to satisfy the zero-subgradient condition
\begin{equation}
\label{EqnZeroSub}
\nabla \Loss_\numobs(\betahat) - \nabla q_\lambda(\betahat) + \lambda \zhat
= 0,
\end{equation}
where $\zhat = (\zhat_S, \zhat_{S^c})$, $\betahat \defn (\betahat_S,
0_{S^c})$, and $q_\lambda(t) \defn \lambda |t| - \rho_\lambda(t)$.
Establish \emph{strict dual feasibility} of $\zhat_{S^c}$; i.e.,
$\|\zhat_{S^c}\|_\infty < 1$.
\item[(iii)] Show that $\betahat$ is a \emph{local minimum} of the
  full program~\eqref{EqnMEst}, and moreover, \emph{all} stationary
  points of the program~\eqref{EqnMEst} are supported on $S$.
\end{enumerate}
Note that the output $(\betahat, \zhat)$ of the PDW construction
depends implicitly on the choice of $\lambda$ and $R$.

Under the restricted strong convexity condition, the restricted
problem~\eqref{EqnRestrictedM} minimized in step (i) is actually a
convex program. Hence, if $\|\betahat_S\|_1 < R$, the zero-subgradient
condition~\eqref{EqnZeroSub} must hold at $\betahat_S$ for the
restricted problem~\eqref{EqnRestrictedM}. Note that when
$\Loss_\numobs$ is convex and $\rho_\lambda$ is the $\ell_1$-penalty
as in the conventional setting, the additional $\ell_1$-constraint in
the programs~\eqref{EqnMEst} and~\eqref{EqnRestrictedM} is omitted. If
also $\Omega = \real^p$, the vector $\betahat_S$ is automatically a
zero-subgradient point if it is a global minimum of the restricted
program~\eqref{EqnRestrictedM}, which greatly simplifies the
analysis. Our refined analysis shows that under suitable restrictions,
global optimality still holds for $\betahat_S$ and $\betahat$, and the
convexity of the restricted program therefore implies uniqueness.

In the sections to follow, we show how the primal-dual witness
technique may be used to establish support recovery results for
general nonconvex regularized $M$-estimators, and then derive
sufficient conditions under which stationary points of the
program~\eqref{EqnMEst} are in fact unique.

%%%%%%%%%%%%%%%%%%%%%%%%%%%%%%%%%%%%%%%%%%%%%%%%%%%%%%%%%%%%%%%%%%%%%%%%%%%

\section{Main statistical results and consequences}
\label{SecMain}

We begin by stating our main theorems concerning support recovery and
$\ell_\infty$-bounds, and then specialize our analysis to particular settings of
interest.

\subsection{Main results}
\label{SecStat}

Our main statistical results concern stationary points of the
regularized $M$-estimator~\eqref{EqnMEst}, where the loss function
satisfies the RSC condition~\eqref{EqnRSC} with parameters
$\{(\alpha_j, \tau_j)\}_{j=1}^2$, and the regularizer is
$\mu$-amenable with $\mu \in [0, \alpha_1)$.  Our first theorem
  concerns the success of the PDW construction described in
  Section~\ref{SecPDW}. The theorem guarantees support recovery
  provided two conditions are met, the first involving an appropriate
  choice of $\lambda$ and $R$, and the second involving strict dual
  feasibility of the dual vector $\zhat$. Note that it is through
  validating the second condition that the incoherence assumption
  arises in the usual $\ell_1$-analysis, but we demonstrate in our
  corollaries to follow that strict dual feasibility may be guaranteed
  under \emph{weaker} conditions when a $(\mu, \gamma)$-amenable
  regularizer is used, instead. (See Appendix~\ref{AppDual} for a technical
  discussion.)  The proof of Theorem~\ref{ThmSuppRecovery} is
  contained in Appendix~\ref{SecSuppRecovery}.

\begin{thm*}[PDW construction for nonconvex functions]
\label{ThmSuppRecovery}
Suppose $\Loss_\numobs$ is an \mbox{$(\alpha, \tau)$-RSC} function and $\rho_\lambda$ is $\mu$-amenable, for some $\mu \in [0,
  \alpha_1)$. Suppose that:
\begin{itemize}
\item[(a)] The parameters $(\lambda, R)$ satisfy the bounds
\begin{subequations}
\begin{align}
\label{EqnLambdaCond}
2 \cdot \max \left\{\|\nabla \Loss_\numobs(\betastar)\|_\infty, \; \alpha_2
\sqrt{\frac{\log k}{n}}\right\} & \le \lambda \le
\sqrt{\frac{(2\alpha_1 - \mu) \alpha_2}{56k}}, \qquad \text{and} \\
\label{EqnRCond}
\max\left\{2 \; \|\betastar\|_1, \; \frac{60 k\lambda}{2\alpha_1 -
  \mu}\right\} & \le R \le \min\left\{\frac{\alpha_2}{8 \lambda}, \;
\frac{\alpha_2}{\tau_2} \sqrt{\frac{n}{\log p}}\right\}.
\end{align}
\end{subequations}
\item[(b)] For some $\delta \in \left[\frac{4R\tau_1 \log
    p}{n\lambda}, \; 1\right]$, the dual vector $\zhat$ from the PDW
  construction satisfies the strict dual feasibility condition
\begin{equation}
\label{EqnStrictDual}
\|\zhat_{S^c}\|_\infty \leq 1 - \delta.
\end{equation}
\end{itemize}
Then for any $k$-sparse vector $\betastar$, the
program~\eqref{EqnMEst} with a sample size $n \ge
\frac{2\tau_1}{2 \alpha_1 - \mu} k \log p$ has a unique stationary
point, given by the primal output $\betahat$ of the PDW construction.
\end{thm*}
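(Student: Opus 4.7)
The plan is to execute the three-step primal-dual witness construction of Section~\ref{SecPDW}, adapted to handle the nonconvexity of both $\Loss_\numobs$ and $\rho_\lambda$. The key structural observations are that RSC restricted to $k$-sparse directions gives genuine strong convexity up to a tolerance of order $\tau_1 k\log p/n$; that $\mu$-amenability yields a quadratic upper bound on the curvature of $q_\lambda \defn \lambda|\cdot| - \rho_\lambda$; and that the strict dual feasibility gap $\delta$ controls the $\ell_1$-tolerance term on the off-support.

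\textbf{Steps 1 and 2 (constructing $(\betahat,\zhat)$).} On the subspace indexed by $S$, for any $\Delta$ supported on $S$ with $\|\Delta\|_2 \le 1$, the local RSC bound~\eqref{EqnLocalRSC} combined with $\|\Delta\|_1 \le \sqrt{k}\|\Delta\|_2$ yields $\inprod{\nabla\Loss_\numobs(\beta+\Delta) - \nabla\Loss_\numobs(\beta)}{\Delta} \ge (\alpha_1 - \tau_1 k\log p/n)\|\Delta\|_2^2$. Together with the $\mu$-amenability of $\rho_\lambda$, the restricted composite objective~\eqref{EqnRestrictedM} is strongly convex with modulus at least $(2\alpha_1 - \mu)/2 > 0$ under the sample-size hypothesis, so $\betahat_S$ is unique. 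A first-order argument using $\|\nabla\Loss_\numobs(\betastar)\|_\infty \le \lambda/2$ gives $\|\betahat_S - \betastar_S\|_2 \precsim \sqrt{k}\lambda/(2\alpha_1 - \mu)$; converting to $\ell_1$ and invoking the upper bound on $\lambda$ in~\eqref{EqnLambdaCond} and the lower bound on $R$ in~\eqref{EqnRCond} yields $\|\betahat_S\|_1 < R$, so the $\ell_1$-constraint is inactive. The zero-subgradient equation~\eqref{EqnZeroSub} then determines $\zhat_S$ and $\zhat_{S^c}$, and hypothesis (b) ensures $\zhat \in \partial\|\betahat\|_1$, so $\betahat \defn (\betahat_S, 0_{S^c})$ is a stationary point of the full program~\eqref{EqnMEst}.

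\textbf{Step 3 (uniqueness).} This is the principal obstacle, since the full program is nonconvex and may a priori admit spurious stationary points. Let $\betatil$ be any stationary point and choose subgradients $\ztil, \zhat \in \partial\|\cdot\|_1$ satisfying the respective stationarity conditions. Writing $\rho_\lambda = \lambda|\cdot| - q_\lambda$ and subtracting, the inner product with $\betatil - \betahat$ yields
\begin{equation*}
\inprod{\nabla\Loss_\numobs(\betatil) - \nabla\Loss_\numobs(\betahat)}{\betatil - \betahat} - \inprod{\nabla q_\lambda(\betatil) - \nabla q_\lambda(\betahat)}{\betatil - \betahat} + \lambda\inprod{\ztil - \zhat}{\betatil - \betahat} \le 0.
\end{equation*}
I would first invoke the coarser $\ell_2$-consistency bound for stationary points from~\cite{LohWai13} (valid via the upper bounds on $R$) to ensure $\|\betatil - \betahat\|_2 \le 1$, so the local RSC bound is usable. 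Then lower-bounding the first term by $\alpha_1\|\betatil - \betahat\|_2^2 - \tau_1(\log p/n)\|\betatil - \betahat\|_1^2$, upper-bounding the second by $\mu\|\betatil - \betahat\|_2^2$ via $\mu$-amenability of $\rho_\lambda$, and splitting the subgradient inner product into its $S$ and $S^c$ parts (which contribute $\ge 0$ by monotonicity on $S$, and $\ge \delta\|\betatil_{S^c}\|_1$ on $S^c$, using $\|\zhat_{S^c}\|_\infty \le 1-\delta$) yields
\begin{equation*}
(\alpha_1 - \mu)\|\betatil - \betahat\|_2^2 + \lambda\delta\|\betatil_{S^c}\|_1 \le \tau_1\frac{\log p}{n}\|\betatil - \betahat\|_1^2 \le \frac{2R\tau_1\log p}{n}\|\betatil - \betahat\|_1.
\end{equation*}
The lower bound $\delta \ge 4R\tau_1\log p/(n\lambda)$ is calibrated precisely so that the $\|\betatil_{S^c}\|_1$ piece on the right is absorbed into $\lambda\delta\|\betatil_{S^c}\|_1/2$ on the left, while the on-support piece $\|\betatil_S - \betahat_S\|_1 \le \sqrt{k}\|\betatil_S - \betahat_S\|_2$ is absorbed into $(\alpha_1 - \mu)\|\betatil - \betahat\|_2^2/2$ via the sample-size assumption. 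This forces $\betatil_{S^c} = 0$ and $\betatil_S = \betahat_S$, so $\betatil = \betahat$, yielding both uniqueness and support recovery.
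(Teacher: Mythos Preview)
Your overall architecture is sound and close in spirit to the paper's, but Step~3 contains a genuine gap in the final ``absorption'' claim. After you bound the right side by $\frac{2R\tau_1\log p}{n}\|\betatil-\betahat\|_1$ and isolate the on-support piece, you are left trying to absorb
\[
\frac{2R\tau_1\log p}{n}\,\sqrt{k}\,\|\betatil_S-\betahat_S\|_2
\]
into $\tfrac{\alpha_1-\mu}{2}\|\betatil-\betahat\|_2^2$. But a term linear in $\|\betatil-\betahat\|_2$ can only be dominated by a quadratic one when $\|\betatil-\betahat\|_2$ is bounded \emph{below}, which is precisely what you are trying to rule out. The sample-size hypothesis does not help here; at best your inequality yields $\|\betatil-\betahat\|_2 \le \frac{4R\tau_1\sqrt{k}\log p}{n(\alpha_1-\mu)}$, a small but strictly positive bound, and it does not force $\betatil_{S^c}=0$ either.

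The paper closes this gap differently. Rather than the crude $\|\cdot\|_1 \le 2R$ bound, it first proves a \emph{cone condition} (Lemma~\ref{LemNuCone}): using strict dual feasibility in the variational inequality, one shows $\|\betatil-\betahat\|_1 \le (4/\delta+2)\sqrt{k}\,\|\betatil-\betahat\|_2$. This converts $\tau_1\frac{\log p}{n}\|\betatil-\betahat\|_1^2$ into a genuinely \emph{quadratic} term $\tau_1(4/\delta+2)^2\frac{k\log p}{n}\|\betatil-\betahat\|_2^2$, which the sample-size assumption then absorbs. The resulting inequality forces $\inprod{\zhat}{\betatil} = \|\betatil\|_1$, and strict dual feasibility gives $\betatil_{S^c}=0$. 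Uniqueness then follows separately from strict convexity of the restricted program on $\real^S$ (Lemma~\ref{LemSecOrder}), rather than in the same stroke. Your argument could also be repaired by keeping the squared $\ell_1$-term and splitting it as $\|\betatil-\betahat\|_1^2 \le 2k\|\betatil-\betahat\|_2^2 + 4R\|\betatil_{S^c}\|_1$, which makes both pieces match the left side homogeneously; but as written, the linear-versus-quadratic mismatch is fatal.
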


\paragraph{Remark:} Of course, Theorem~\ref{ThmSuppRecovery} is 
vacuous unless proper choices of $\lambda, R$, and $\delta$ exist. In
the corollaries to follow, we show that $\|\nabla
\Loss_\numobs(\betastar)\|_\infty \le c \sqrt{\frac{\log p}{n}}$, with
high probability, in many settings of interest. In particular, we may
choose $\lambda \asymp \sqrt{\frac{\log p}{n}}$ to satisfy
inequality~\eqref{EqnLambdaCond} when the sample size satisfies $n
\succsim k \log p$. Note that $R \asymp \frac{1}{\lambda}$ then causes
inequality~\eqref{EqnRCond} to be satisfied under the same sample size
scaling. Indeed, if the RSC parameters were known, we could simply
take $R = \min\left\{\frac{\alpha_2}{8 \lambda}, \;
\frac{\alpha_2}{\tau_2} \sqrt{\frac{n}{\log p}}\right\}$ and then
focus on tuning $\lambda$. Finally, note that the inequality
$\frac{4R\tau_1 \log p}{n\lambda} \le 1$ is satisfied as long as $R
\le \frac{n\lambda}{4\tau_1 \log p}$, which is guaranteed by the
preceding choice of $(\lambda, R)$ and the scaling $n \succsim k \log
p$.  In this way, the existence of an appropriate choice of $\delta$
is guaranteed.\footnote{An important observation is that the parameter
  $\delta$ does not actually appear in the statistical estimation
  procedure and is simply a byproduct of the PDW analysis. Hence, it
  is not necessary to know or estimate a valid value of $\delta$.} \\

Note also that our results require the assumption $\mu < 2 \alpha_1$,
where a smaller gap of $(2 \alpha_1 - \mu)$ translates into a larger
sample size requirement. This consideration may motivate an advantage
of using the LSP regularizer over a regularizer such as SCAD or MCP;
as discussed in Section~\ref{SecPenalties}, the SCAD and MCP
regularizers have $\mu$ equal to a constant value, whereas $\mu =
\lambda^2 \rightarrow 0$ for the LSP. On the other hand, the LSP is
\emph{not} $(\mu, \gamma)$-amenable, which as discussed later, allows
us to remove the incoherence condition for SCAD and MCP when
establishing strict dual feasibility~\eqref{EqnStrictDual}. Indeed,
the MCP is designed so that $\mu$ is minimal subject to unbiasedness
and selection of the regularizer~\cite{Zha10}. This suggests that for
more incoherent designs, the LSP may be preferred for variable
selection, whereas for less incoherent designs, SCAD or MCP may be
better. In the simulations of Section~\ref{SecSims}, however, the LSP
regularizer only performs negligibly better than the $\ell_1$-penalty
in situations where the incoherence condition holds and the same
regularization parameter $\lambda$ is chosen.

Finally, we note that although the conditions of
Theorem~\ref{ThmSuppRecovery} are already relatively mild, they are
nonetheless \emph{sufficient} conditions. Indeed, as confirmed
experimentally, there are many situations where the condition $\mu <
2 \alpha_1$ does \emph{not} hold, yet the stationary points of the
program~\eqref{EqnMEst} still appear to be supported on $S$ and/or
unique. Two feasible explanations for this phenomenon are the
following: First, it is possible that in cases where $2 \alpha_1 \le
\mu$, the composite objective function is not convex over the entire
feasible set, yet $\betahat$ is still sufficiently close to
$\betastar$ and the positive definite condition
\begin{equation}
\label{EqnPSDLoss}
\left(\nabla^2 \Loss_\numobs(\betahat) + \nabla^2
\rho_\lambda(\betahat)\right)_{SS} \succ 0
\end{equation}
holds at the point $\betahat$. Examining the proof of
Theorem~\ref{ThmSuppRecovery}, we may see that
equation~\eqref{EqnPSDLoss}, together with strict dual feasibility, is
sufficient for establishing that $\betahat$ is a local minimum of the
program~\eqref{EqnMEst}; Lemma~\ref{LemStationBd} in
Appendix~\ref{SecSuppRecovery} then implies that all stationary points
are supported on $S$. Nonetheless, since the restricted
program~\eqref{EqnRestrictedM} is no longer guaranteed to be convex,
multiple stationary points may exist. Second, we note that although
strict convexity of the objective and a zero-subgradient
condition are certainly sufficient conditions to guarantee a unique
global minimum, a function may have diverse regions of
convexity/concavity and still possess a unique global minimum. In such cases, we
may still be lucky in simulation studies and obtain the global optimum. \\

Our second general theorem provides control on the $\ell_\infty$-error
between any stationary point and $\betastar$, and shows that the
local/global optimum of a nonconvex regularized program agrees with
the oracle result when the regularizer is $(\mu, \gamma)$-amenable. We
define the \emph{oracle estimator} according to
\begin{equation*}
\boracle_S \defn \arg\min_{\beta_S \in \real^S} \left\{\Loss_\numobs
\big(\beta_S, 0_{S^c}))\right\},
\end{equation*}
and write $\boracle \defn (\boracle_S, 0_{S^c})$. In other words, the
oracle estimator is the unpenalized estimator obtained from minimizing
$\Loss_\numobs$ over the true support set $S$.  As shown in the proof, under
the assumed RSC conditions, the restricted function $\Loss_\numobs \mid_S$
is strictly convex and $\boracle_S$ is uniquely defined.  With
this notation, we have the following result:
\begin{thm*}
\label{ThmEllInf}
Suppose the assumptions of Theorem~\ref{ThmSuppRecovery} are satisfied
and strict dual feasibility~\eqref{EqnStrictDual} holds. Then the
unique stationary point $\betahat$ of the program~\eqref{EqnMEst}
has the following properties:
\begin{enumerate}
\item[(a)] Let $\Qhat \defn \int_0^1 \nabla^2 \Loss_n \left(\betastar + t(\betahat - \betastar)\right)  dt$. Then
\begin{equation}
\label{EqnHoneydew}
\|\betahat - \betastar\|_\infty \le \left\|\left(\Qhat_{SS}\right)^{-1} \nabla
\Loss_\numobs(\betastar)_S\right\|_\infty + \lambda
\opnorm{\left(\Qhat_{SS}\right)^{-1}}_\infty.
\end{equation}
\item[(b)] Moreover, if $\rho_\lambda$ is $(\mu, \gamma)$-amenable and
  the minimum value $\betamin \defn \min_{j \in S} |\beta^*_j|$ is lower-bounded as
\begin{subequations}
\begin{align}
\label{EqnLavender}
\betamin \geq \lambda \left(\gamma + \opnorm{\left(\Qhat_{SS}\right)^{-1}}_\infty \right) +
\left\|\left(\Qhat_{SS}\right)^{-1} \nabla
\Loss_\numobs(\betastar)_S\right\|_\infty,
\end{align}
then $\betahat$ agrees with the oracle estimator $\boracle$, and we
have the tighter bound
\begin{align}
\label{EqnWatermelon}
\|\betahat - \betastar\|_\infty \le \left\|\left(\Qhat_{SS}\right)^{-1} \nabla
\Loss_\numobs(\betastar)_S\right\|_\infty.
\end{align}
\end{subequations}
\end{enumerate}
\end{thm*}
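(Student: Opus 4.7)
The plan is to combine the subgradient optimality condition inherited from Theorem~\ref{ThmSuppRecovery} with a fundamental-theorem-of-calculus expansion of $\nabla \Loss_\numobs$. Because Theorem~\ref{ThmSuppRecovery} already guarantees that the unique stationary point $\betahat$ is supported on $S$, the restriction of the zero-subgradient equation~\eqref{EqnZeroSub} to coordinates in $S$ reads
\[
\nabla \Loss_\numobs(\betahat)_S - \nabla q_\lambda(\betahat)_S + \lambda \zhat_S \; = \; 0,
\]
with $\zhat_S \in \partial \|\betahat_S\|_1$. Using amenability properties (iv) and (vi), one checks that $q_\lambda$ is continuously differentiable everywhere with $q_\lambda'(0) = 0$, so that the residual $r_j := \lambda \zhat_j - q_\lambda'(\betahat_j)$ equals $\rho_\lambda'(\betahat_j)$ on $\supp(\betahat)$ and reduces to $\lambda \zhat_j$ at coordinates of $S$ where $\betahat_j = 0$; in either case, $|r_j| \le \lambda$. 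Since both $\betahat$ and $\betastar$ are supported on $S$, the fundamental theorem of calculus gives $\nabla \Loss_\numobs(\betahat)_S - \nabla \Loss_\numobs(\betastar)_S = \Qhat_{SS}(\betahat_S - \betastar_S)$. Invertibility of $\Qhat_{SS}$ follows by applying the RSC condition~\eqref{EqnLocalRSC} to $k$-sparse directions supported on $S$ under the sample-size scaling $n \succsim k\log p$ already imposed in Theorem~\ref{ThmSuppRecovery}.

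Solving for $\betahat_S - \betastar_S$, taking $\ell_\infty$-norms, and applying the elementwise bound $\|r\|_\infty \le \lambda$ together with the inequality $\|(\Qhat_{SS})^{-1} v\|_\infty \le \opnorm{(\Qhat_{SS})^{-1}}_\infty \|v\|_\infty$ yields part (a) directly. For part (b), my approach is to show that under the $(\mu,\gamma)$-amenability hypothesis and the minimum-signal bound~\eqref{EqnLavender}, the support coordinates of $\betahat$ are all large enough that the unbiasedness property (vii) eliminates the penalty entirely on $S$, so that $\betahat$ actually satisfies the oracle's first-order condition. Concretely, combining the triangle inequality $|\betahat_j| \ge \betamin - \|\betahat - \betastar\|_\infty$ with the part-(a) bound and with hypothesis~\eqref{EqnLavender}—whose constants are engineered precisely to leave a margin of $\gamma\lambda$—gives $|\betahat_j| \ge \gamma\lambda$ for every $j \in S$. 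Property (vii) then forces $\rho_\lambda'(\betahat_j) = 0$ on $S$, collapsing the restricted subgradient equation to $\nabla \Loss_\numobs(\betahat)_S = 0$. This is the unique optimality condition for $\boracle$ (the restricted loss $\Loss_\numobs|_S$ is strictly convex under RSC), so $\betahat = \boracle$. Rerunning the FTC identity with this residual-free equation then delivers the tighter bound~\eqref{EqnWatermelon}.

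The main obstacle I anticipate lies in the first step, namely, handling the subgradient $\zhat_S$ cleanly on coordinates of $S$ where $\betahat_j = 0$: the clean $\lambda$-bound on $|r_j|$ at such coordinates is only available because amenability property (vi) forces $q_\lambda'(0) = 0$, so that the subgradient indeterminacy of $\|\cdot\|_1$ at zero does not propagate into the final bound. A secondary bookkeeping task is justifying invertibility of $\Qhat_{SS}$ uniformly along the segment from $\betastar$ to $\betahat$, for which I will invoke the RSC condition on the averaged Hessian, leveraging the $\ell_2$-control on $\|\betahat - \betastar\|_2$ that is implicit in the hypotheses of Theorem~\ref{ThmSuppRecovery}.
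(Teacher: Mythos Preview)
Your proposal is correct and follows essentially the same route as the paper: combine the zero-subgradient condition on $S$ with the fundamental-theorem-of-calculus identity $\nabla \Loss_\numobs(\betahat)_S - \nabla \Loss_\numobs(\betastar)_S = \Qhat_{SS}(\betahat_S - \betastar_S)$, bound the residual $\lambda \zhat_S - \nabla q_\lambda(\betahat_S)$ by $\lambda$ via amenability to obtain~\eqref{EqnHoneydew}, and then for part~(b) use~\eqref{EqnHoneydew} together with the beta-min condition~\eqref{EqnLavender} to force $|\betahat_j| \ge \gamma\lambda$ on $S$, whence unbiasedness collapses the restricted optimality condition to $\nabla \Loss_\numobs(\betahat)_S = 0$ and strict convexity of $\Loss_\numobs|_S$ identifies $\betahat$ with $\boracle$. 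Your explicit case split at coordinates $j \in S$ with $\betahat_j = 0$ is a welcome bit of care that the paper leaves implicit in its appeal to the bound $\|\nabla \rho_\lambda(\betahat_S)\|_\infty \le \lambda$.
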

\noindent The proof of Theorem~\ref{ThmEllInf} is provided in
Appendix~\ref{SecThmEllInf}.

\paragraph{Remark:}
Theorem~\ref{ThmEllInf} underscores the strength of $(\mu,
\gamma)$-amenable regularizers. Indeed, with the addition of a
beta-min condition~\eqref{EqnLavender}, which provides a lower bound on the minimum signal strength, the unbiasedness property
allows us to remove the second term in inequality~\eqref{EqnHoneydew}
and obtain a faster oracle rate~\eqref{EqnWatermelon}. As described
in greater detail in the corollaries below, we may show that the
right-hand expression in inequality~\eqref{EqnWatermelon} is bounded
by $\order\left(\sqrt{\frac{\log p}{n}}\right)$, with high
probability, provided the spectrum of $\nabla^2 \Loss_n(\betastar)$ is bounded appropriately. \\

\noindent We now unpack the consequences of
Thoerems~\ref{ThmSuppRecovery} and~\ref{ThmEllInf} for several
concrete examples.

%%%%%%%%%%%%%%%%%%

\subsection{Ordinary least squares linear regression}
\label{SecOLS}

Our first application focuses on the setting of ordinary least squares,
together with the nonconvex regularizers introduced in
Section~\ref{SecPenalties}: SCAD, MCP, and LSP. We compare the
consequences of Theorems~\ref{ThmSuppRecovery} and~\ref{ThmEllInf} for
each of these regularizers with the corresponding results for the convex
$\ell_1$-penalty. Our theory demonstrates a clear advantage of using
nonconvex regularizers such as SCAD and MCP that are $(\mu,
\gamma)$-amenable; whereas support recovery based on $\ell_1$-based
methods is known to require fairly stringent incoherence conditions,
our corollaries show that methods based on nonconvex regularizers will
guarantee support recovery even in the absence of incoherence conditions.

The $\rho_\lambda$-regularized form of least squares regression may be
written in the form
\begin{align}
\label{EqnOLSLoss}
\betahat \in \arg\min_{\|\beta\|_1 \le R} \left\{\frac{1}{2} \beta^T
\frac{X^TX}{n} \beta - \frac{y^T X}{n} \beta +
\rho_\lambda(\beta)\right\}.
\end{align}
Note that the Hessian of the loss function is given by $\nabla^2
\Loss_\numobs(\beta) = \frac{X^T X}{\numobs}$. While the sample
covariance matrix is always positive semidefinite, it has rank at most
$n$. Hence, in high-dimensional settings where $n < p$, the Hessian of
the loss function has at least $p-n$ zero eigenvalues, implying that
\emph{any} nonconvex regularizer $\rho_\lambda$ makes the overall
program~\eqref{EqnOLSLoss} nonconvex. \\

In analyzing the family of estimators~\eqref{EqnOLSLoss}, we assume
throughout that $\numobs \geq c_0 \kdim \log \pdim$, for a sufficiently
large constant $c_0$. By known information-theoretic
results~\cite{WaiIEEE09}, this type of lower bound is required for any
method to recover the support of a $\kdim$-sparse signal, hence is
not a limiting restriction.  With this setup, we have the following
result, proved in Appendix~\ref{AppCorOLSLoss}:
\begin{cor*}
\label{CorOLSLoss}
Suppose $X$ and $\epsilon$ are sub-Gaussian, and regularization
parameters $(\lambda, R)$ are chosen such that $\|\betastar\|_1 \le \frac{R}{2}$
and $\CLOW \sqrt{\frac{\log p}{n}} \le \lambda \le \frac{\CUP}{R}$, for
some constants $\CLOW$ and $\CUP$. Also suppose the sample covariance matrix $\GamHat = \frac{X^TX}{n}$ satisfies the condition
\begin{equation}
\label{EqnCInf}
\opnorm{\GamHat_{SS}^{-1}}_\infty \le c_\infty.
\end{equation}

\begin{enumerate}
\item[(a)] Suppose $\rho_\lambda$ is $\mu$-amenable, with $\mu < \lambda_{\min}(\Sigma_x)$, and $\GamHat$ also satisfies the
  incoherence condition
\begin{equation}
\label{EqnSampIncoh}
\opnorm{\GamHat_{S^cS} \GamHat_{SS}^{-1}}_\infty \le \eta < 1.
\end{equation}
Then with probability at least $1 - c_1 \exp(-c_2 \min\{k, \log p\})$, the nonconvex objective~\eqref{EqnOLSLoss} has a unique stationary point $\betahat$ (corresponding to the global
  optimum). Furthermore, $\supp(\betahat) \subseteq \supp(\betastar)$, and
\begin{equation}
\label{EqnEllInftyBound}
\|\betahat - \betastar\|_\infty \leq \CFINAL \sqrt{\frac{\log p}{n}} + c_\infty \lambda.
\end{equation}
\item[(b)] Suppose the regularizer $\rho_\lambda$ is $(\mu,
  \gamma)$-amenable, with $\mu < \lambda_{\min}(\Sigma_x)$. Also suppose
  \begin{equation*}
  \betamin \ge \lambda(\gamma + c_\infty) + c_3 \sqrt{\frac{\log p}{n}}.
  \end{equation*}
  Then with probability at least \mbox{$1 - c_1 \exp(-c_2 \min \{k,
    \log p\})$,} the nonconvex objective~\eqref{EqnOLSLoss} has a
  unique stationary point $\betahat$ given by the oracle estimator $\boracle$, and
  \begin{equation}
  \label{EqnEllInftyBound2}
  \|\betahat - \betastar\|_\infty \leq \CFINAL \sqrt{\frac{\log p}{n}}.
  \end{equation}
\end{enumerate}
\end{cor*}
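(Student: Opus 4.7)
The plan is to verify the hypotheses of Theorems~\ref{ThmSuppRecovery} and~\ref{ThmEllInf} in the OLS setting and read off the conclusions. For the loss~\eqref{EqnOLSLoss}, the gradient at the truth is $\nabla\Loss_\numobs(\betastar) = -X^T\epsilon/n$, which has $\|\cdot\|_\infty \le c\sqrt{\log p/n}$ with probability at least $1 - c_1 e^{-c_2\log p}$ by standard sub-Gaussian concentration, yielding the lower bound on $\lambda$ in~\eqref{EqnLambdaCond}. Established results for sub-Gaussian designs (as used in~\cite{LohWai13,RasEtAl10}) supply the RSC condition~\eqref{EqnRSC} with $\alpha_1,\alpha_2 \asymp \lambda_{\min}(\Sigma_x)$ and bounded $(\tau_1,\tau_2)$, so the hypothesis $\mu < \lambda_{\min}(\Sigma_x)$ gives $\mu < \alpha_1$. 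With $n \succsim k\log p$, the choices $\lambda \asymp \sqrt{\log p/n}$ and $R \asymp 1/\lambda$ then satisfy both~\eqref{EqnLambdaCond} and~\eqref{EqnRCond}, and the remaining work is certifying strict dual feasibility~\eqref{EqnStrictDual} in each of the two parts.

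For part (a), I would run the PDW construction, use the zero-subgradient equation on $S$ to express $\betahat_S - \betastar_S = \GamHat_{SS}^{-1}\bigl(X_S^T\epsilon/n + \nabla q_\lambda(\betahat)_S - \lambda\zhat_S\bigr)$, and substitute into the $S^c$ component of~\eqref{EqnZeroSub}, using $\betahat_{S^c}=0$ and $\nabla q_\lambda(0)=0$, to obtain
\begin{equation*}
\lambda\zhat_{S^c} \;=\; \frac{X_{S^c}^T\epsilon}{n} \;-\; \GamHat_{S^cS}\GamHat_{SS}^{-1}\!\left(\frac{X_S^T\epsilon}{n} + \nabla q_\lambda(\betahat)_S - \lambda\zhat_S\right).
\end{equation*}
Taking $\|\cdot\|_\infty$ and invoking the sample incoherence bound~\eqref{EqnSampIncoh}, the universal estimate $\|\nabla q_\lambda(t)\|_\infty \le \lambda$ (valid for any amenable regularizer), $\|\zhat_S\|_\infty \le 1$, and the sub-Gaussian tail for $\|X^T\epsilon/n\|_\infty$ produces $\|\zhat_{S^c}\|_\infty \le 1-\delta$ once the constant in $\lambda \succsim \sqrt{\log p/n}$ is large enough. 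Theorem~\ref{ThmSuppRecovery} then yields support inclusion and uniqueness, while Theorem~\ref{ThmEllInf}(a) with $\Qhat_{SS} = \GamHat_{SS}$ and $\opnorm{\GamHat_{SS}^{-1}}_\infty \le c_\infty$ delivers~\eqref{EqnEllInftyBound}.

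For part (b), the $(\mu,\gamma)$-amenability and beta-min hypothesis combine to remove the incoherence requirement. Solving the restricted program~\eqref{EqnRestrictedM} and controlling $\|\betahat_S - \betastar_S\|_\infty$ via the $S$-restricted version of the part-(a) manipulation, together with the lower bound on $\betamin$, forces $|\betahat_j| \ge \gamma\lambda$ for every $j\in S$. Unbiasedness then gives $\nabla q_\lambda(\betahat)_S = \lambda\sign(\betahat_S) = \lambda\zhat_S$, so the $S$-equation collapses to $\betahat_S - \betastar_S = \GamHat_{SS}^{-1}X_S^T\epsilon/n$, i.e., $\betahat = \boracle$. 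Substituting this identity into the $S^c$-equation yields the projected-noise representation
\begin{equation*}
\lambda\zhat_{S^c} \;=\; \frac{X_{S^c}^T\epsilon}{n} - \GamHat_{S^cS}\GamHat_{SS}^{-1}\frac{X_S^T\epsilon}{n} \;=\; \frac{X_{S^c}^T\,\Pi^\perp\,\epsilon}{n}, \qquad \Pi^\perp = I - X_S(X_S^TX_S)^{-1}X_S^T.
\end{equation*}
Since $\|\Pi^\perp X_{S^c}^{(j)}\|_2 \le \|X_{S^c}^{(j)}\|_2 \le c\sqrt{n}$ under sub-Gaussianity, conditioning on $X$ and applying a sub-Gaussian maximal inequality for $\epsilon$ yields $\|\lambda\zhat_{S^c}\|_\infty \le c\sqrt{\log p/n}$ with no incoherence hypothesis on $\GamHat$ whatsoever, hence $\|\zhat_{S^c}\|_\infty \le 1/2$ for a suitable constant in $\lambda$. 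Theorems~\ref{ThmSuppRecovery} and~\ref{ThmEllInf}(b) then give the oracle equality and~\eqref{EqnEllInftyBound2}.

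The main obstacle is sequencing this bootstrap correctly: the sharp projected-noise bound on $\zhat_{S^c}$ is only available \emph{after} one knows $\betahat = \boracle$, but the oracle equality itself relies on a preliminary $\ell_\infty$ control of $\betahat_S - \betastar_S$ (from the restricted problem and RSC alone) to activate beta-min and then unbiasedness. Interlocking the restricted-problem $\ell_\infty$ bound, the $(\mu,\gamma)$-collapse of $\nabla q_\lambda(\betahat)_S$, and the conditional Gaussian bound on $X_{S^c}^T\Pi^\perp\epsilon/n$ in the right order—so that one invokes Theorem~\ref{ThmEllInf}(a) and Theorem~\ref{ThmEllInf}(b) with mutually consistent hypotheses—is the delicate piece; the individual estimates are otherwise standard.
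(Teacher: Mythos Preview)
Your approach matches the paper's, but two points deserve sharpening.

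In part (a), bounding $\nabla q_\lambda(\betahat)_S$ and $\lambda\zhat_S$ separately contributes $2\eta$ to $\|\zhat_{S^c}\|_\infty$, so strict dual feasibility would only follow for $\eta<1/2$, not the full range $\eta<1$ in~\eqref{EqnSampIncoh}. The paper instead uses the identity $\lambda\zhat_S-\nabla q_\lambda(\betahat_S)=-\nabla\rho_\lambda(\betahat_S)$ and bounds the combination by $\lambda$ via Lemma~\ref{LemSmiley}(a), yielding only $\eta$. It also writes the noise piece in the projected form $X_{S^c}^T\Pi\epsilon/n$ already in part (a); the $\ell_\infty$ bound $\|\GamHat_{SS}^{-1}X_S^T\epsilon/n\|_\infty\le c_3\sqrt{\log p/n}$ is then obtained by conditioning on $X$ and controlling $\max_{j\in S}\|X_S(X_S^TX_S/n)^{-1}e_j\|_2^2$ via a spectral bound on the sample covariance, rather than paying the factor $c_\infty$ twice.

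Your sequencing worry in part (b) is not an actual obstacle. The preliminary bound~\eqref{EqnHoneydew} follows directly from the zero-subgradient equation~\eqref{EqnZeroSub} satisfied by the PDW output $(\betahat,\zhat)$, \emph{before} any appeal to strict dual feasibility. So the order is linear: the PDW construction produces $(\betahat,\zhat)$ satisfying~\eqref{EqnZeroSub}; inequality~\eqref{EqnHoneydew} controls $\|\betahat_S-\betastar_S\|_\infty$; the beta-min hypothesis then triggers Lemma~\ref{LemSInfty}, giving $\nabla q_\lambda(\betahat_S)=\lambda\zhat_S$; your projected-noise calculation now verifies~\eqref{EqnStrictDual}; Theorems~\ref{ThmSuppRecovery} and~\ref{ThmEllInf}(b) conclude. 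The paper packages exactly this order in Proposition~\ref{PropUnbiased}. There is no bootstrap or circularity to manage.
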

\noindent Note that if we also have the beta-min condition $\betamin \geq 2 \left(c_3 \sqrt{\frac{\log
    \pdim}{\numobs}} + c_\infty \lambda\right)$ in part (a), then $\betahat$ is still a sign-consistent estimate of
$\betastar$; however, the guaranteed bound~\eqref{EqnEllInftyBound} is looser than the oracle bound~\eqref{EqnEllInftyBound2} derived in part (b). \\

The proof of Corollary~\ref{CorOLSLoss} is provided in
Appendix~\ref{AppCorOLSLoss}. Here, we make some comments about its
consequences.  Regularizers satisfying the conditions of part (b)
include the SCAD and MCP penalties.  Recall that for the SCAD penalty,
we have $\mu = \frac{1}{a-1}$; and for the MCP, we have $\mu =
\frac{1}{b}$. Hence, the lower-eigenvalue condition translates into
$\frac{1}{a-1} < \lambda_{\min}(\Sigma_x)$ and $\frac{1}{b} <
\lambda_{\min}(\Sigma_x)$, respectively.  The LSP penalty is an
example of a regularizer that satisfies the conditions of part (a), but not part (b): with this choice, we have $\mu = \lambda^2$, so
the condition $\mu < \lambda_{\min}(\Sigma_x)$ is
satisfied asymptotically whenever $\lambda_{\min}(\Sigma_x)$ is
bounded below by a constant.  A version of part (a) also holds for the
$\ell_1$-penalty, as shown in past work~\cite{Wai09}.

A valuable consequence of Corollary~\ref{CorOLSLoss} is that it establishes conditions under which stationary points are unique and variable selection consistency
holds, when using certain nonconvex regularizers. The
distinguishing point between parts (a) and (b) of the corollary is that using $(\mu, \gamma)$-amenable regularizers allow us to do away with an
incoherence assumption~\eqref{EqnSampIncoh} and guarantee that the
unique stationary point is in fact equal to the oracle estimator.

Furthermore, a great deal of past work on nonconvex
regularizers~\cite{FanLi01, Zha10, Zha12, ZhaZha12} has focused on the
ordinary least squares regression objective~\eqref{EqnOLSLoss}; hence,
it is instructive to interpret the results of
Corollary~\ref{CorOLSLoss} in light of this existing
work. Zhang~\cite{Zha12} shows that the two-step MC+ estimator
(beginning with a global optimum of the program~\eqref{EqnOLSLoss}
with the MCP regularizer) is guaranteed to be consistent for variable
selection, under only a sparse eigenvalue assumption on the design
matrix. Our result shows that the global optimum obtained in
the MCP step is actually already guaranteed to be consistent for variable
selection, provided we have only slightly stronger assumptions about
lower- and upper-eigenvalue bounds on the design matrix. In another
related paper, Wainwright~\cite{WaiIEEE09} establishes necessary
conditions for support recovery in a linear regression setting when
the covariates are drawn from a Gaussian distribution. As remarked in
that paper, the necessary conditions only require eigenvalue bounds on
the design matrix, in contrast to the more stringent incoherence
conditions appearing in necessary and sufficient conditions for the
success of the Lasso~\cite{Wai09,Zhao06}. Using standard matrix concentration results for sub-Gaussian variables, it may be shown that the inequalities~\eqref{EqnCInf} and~\eqref{EqnSampIncoh} hold, with high probability, when the population-level bounds are satisfied:
\begin{equation}
\label{EqnPopIncoh}
\opnorm{(\Sigma_{SS})^{-1}}_\infty \le \frac{c_\infty}{2}, \qquad  \text{and} \qquad \opnorm{(\Sigma_x)_{S^cS} (\Sigma_x)_{SS}^{-1}}_\infty \le
\frac{\eta}{2}.
\end{equation}
However, the second inequality~\eqref{EqnPopIncoh} is a fairly strong assumption
on the covariance matrix $\Sigma_x$, and as we explore in the simulations
of Section~\ref{SecSims} below, simple covariance matrices such as
the class~\eqref{EqnNonincoh} defined in Section~\ref{SecMatClass}
fail to satisfy the latter condition. Hence,
Corollary~\ref{CorOLSLoss} shows clear advantage of using the SCAD or
MCP regularizers over the $\ell_1$-penalty or LSP when $\Sigma_x$ is
not incoherent.

%%%%%%%%%%%%%%%%%%%%%%%%%%%%%%%%%%%%%%%%%%%%%%%%%%%%%%%%%%%%%%%%%%%%%%%

\subsection{Linear regression with corrupted covariates}

We now shift our focus to an application where the loss function
itself is nonconvex. In particular, we analyze the situation when the
general loss function is defined according to
equation~\eqref{EqnLossLinear}. To simplify our discussion, we only
state an explicit corollary for the case when $\rho_\lambda$ is the
convex $\ell_1$-penalty; the most general case, involving a nonconvex
quadratic form and a nonconvex regularizer, is simply a hybrid of the
analysis below and the arguments of the previous section. Our goal is
to illustrate the applicability of the primal-dual witness technique
for nonconvex loss functions. \\

Let us recall the problem of linear regression with corrupted
covariates, as previously introduced in Section~\ref{SecLoss}.  The
pairs $\{(x_i, y_i)\}_{i=1}^\numobs$ are drawn according to the
standard linear model $y_i = x_i^T \betastar + \epsilon_i$.  While the
response vector $y = \{y_i\}_{i=1}^\numobs$ is assumed to be observed, suppose
we observe only the corrupted versions $z_i = x_i + w_i$ of the
covariates.  Based on the observed variables $\{(z_i,
y_i)\}_{i=1}^\numobs$, we may then compute the quantities
\begin{equation}
\label{EqnGamCorrupted}
(\GamHat, \gamhat) \defn \left(\frac{Z^T Z}{n} - \Sigma_w, \frac{Z^T
  y}{n}\right),
\end{equation}
and estimate $\betastar$ based on the following
nonconvex program:
\begin{equation}
\label{EqnCorrLinearL1}
\betahat \in \arg\min_{\|\beta\|_1 \le R} \left\{\frac{1}{2} \beta^T
\GamHat \beta - \gamhat^T \beta + \lambda \|\beta\|_1\right\}.
\end{equation}
Also suppose $n \ge k^2$ and $\numobs \geq c_0 k \log p$, for a
sufficiently large constant $c_0$.

\begin{cor*}
\label{CorCorrLinearL1}
Suppose $(X, w, \epsilon)$ are sub-Gaussian,
$\lambda_{\min}(\Sigma_x) > 0$, and $(\lambda, R)$ are chosen
such that $\|\betastar\|_1 \le \frac{R}{2}$ and $\CLOW \sqrt{\frac{\log p}{n}}
\le \lambda \le \frac{\CUP}{R}$.  If in addition,
\begin{equation*}
\opnorm{\GamHat_{SS}^{-1}}_\infty \le c_\infty,
\end{equation*}
and
\begin{equation}
\label{EqnIncohLinear}
\opnorm{\GamHat_{S^cS} \GamHat_{SS}^{-1}}_\infty \le \eta < 1,
\end{equation}
then with probability at least $1 - c_1 \exp(-c_2 \min\{k, \log p\})$,
the nonconvex objective~\eqref{EqnCorrLinearL1} has a unique
stationary point $\betahat$ (corresponding to the global optimum) such
that \mbox{$\supp(\betahat) \subseteq \supp(\betastar)$,} and
\begin{align}
\label{EqnEllInfyLogistic}
\|\betahat - \betastar\|_\infty \leq & c_3 \, \sqrt{\frac{\log p}{n}} + c_\infty \lambda.
\end{align}
\end{cor*}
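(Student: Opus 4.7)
}
My plan is to view this corollary as a direct instantiation of Theorems~\ref{ThmSuppRecovery} and~\ref{ThmEllInf}, with the convex $\ell_1$-penalty (which is $0$-amenable, so $\mu=0$ and $q_\lambda\equiv 0$) paired with the nonconvex quadratic loss
\begin{equation*}
\Loss_\numobs(\beta) \,=\, \tfrac{1}{2}\beta^T \GamHat \beta - \gamhat^T \beta, \qquad \nabla \Loss_\numobs(\beta)\,=\,\GamHat\beta - \gamhat, \qquad \nabla^2 \Loss_\numobs(\beta) \,=\, \GamHat.
\end{equation*}
Because the Hessian is constant, the matrix $\Qhat$ appearing in Theorem~\ref{ThmEllInf} equals $\GamHat$, which will make the $\ell_\infty$-bound immediate once the PDW construction succeeds. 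The task therefore reduces to verifying (i) the RSC condition on $\Loss_\numobs$, (ii) the deviation bound $\|\nabla \Loss_\numobs(\betastar)\|_\infty \precsim \sqrt{\log p/n}$, and (iii) strict dual feasibility of $\zhat_{S^c}$.

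For step (i), I would invoke the sub-Gaussian deviation results established in our earlier work on errors-in-variables regression: under the stated sub-Gaussian assumptions on $(X,w)$ and the condition $\lambda_{\min}(\Sigma_x)>0$, the random surrogate $\GamHat$ satisfies an RSC-type lower bound of the form~\eqref{EqnRSC} with $\alpha_1,\alpha_2$ on the order of $\lambda_{\min}(\Sigma_x)$ and $\tau_1,\tau_2$ a constant multiple of $(\opnorm{\Sigma_w}_2 + \opnorm{\Sigma_x}_2)^2/\lambda_{\min}(\Sigma_x)$, with probability at least $1 - c_1\exp(-c_2 n)$. Under $n \succsim k\log p$ the effective curvature term dominates the tolerance term for all $k$-sparse directions, which (combined with $\mu=0$) verifies $\mu \in [0,\alpha_1)$ and the sample-size hypothesis of Theorem~\ref{ThmSuppRecovery}. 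For step (ii), I would write
\begin{equation*}
\nabla \Loss_\numobs(\betastar) \,=\, \GamHat\betastar - \gamhat \,=\, \Big(\tfrac{Z^TZ}{n}-\Sigma_w\Big)\betastar - \tfrac{Z^T(X\betastar+\epsilon)}{n},
\end{equation*}
and bound each entry by standard products-of-sub-Gaussians concentration, obtaining $\|\nabla\Loss_\numobs(\betastar)\|_\infty \le c\sqrt{\log p/n}$ with probability at least $1 - c_1\exp(-c_2\log p)$. Together with the stated choice $\lambda \succsim \sqrt{\log p/n}$, this verifies the lower bound in~\eqref{EqnLambdaCond}; the upper bound in~\eqref{EqnLambdaCond} and the window in~\eqref{EqnRCond} are then guaranteed by the assumed scaling $\lambda \le \CUP/R$, $\|\betastar\|_1 \le R/2$, and $n \succsim k^2$ (which dominates $k\log p$ and absorbs the $\tau_2\sqrt{\log p/n}$ side of~\eqref{EqnRCond}).

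Step (iii) is the one requiring genuine care. Since $q_\lambda \equiv 0$, the zero-subgradient condition~\eqref{EqnZeroSub} restricted to $S$ gives $\betahat_S = \GamHat_{SS}^{-1}(\gamhat_S - \lambda\zhat_S)$, and substituting into the $S^c$ block yields
\begin{equation*}
\zhat_{S^c} \,=\, \GamHat_{S^cS}\GamHat_{SS}^{-1}\zhat_S \,+\, \tfrac{1}{\lambda}\Big(I - \GamHat_{S^cS}\GamHat_{SS}^{-1}\Big)\big(\gamhat - \GamHat\betastar\big)_{S^c\cup S \text{ blocks}},
\end{equation*}
where I use $\gamhat_S = \GamHat_{SS}\betastar_S + (\gamhat-\GamHat\betastar)_S$ since $\betastar_{S^c}=0$. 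Taking $\|\cdot\|_\infty$ and using the incoherence hypothesis~\eqref{EqnIncohLinear} together with the bound from step (ii), I obtain
\begin{equation*}
\|\zhat_{S^c}\|_\infty \,\le\, \eta \,+\, \tfrac{(1+\eta)}{\lambda}\,\|\gamhat-\GamHat\betastar\|_\infty \,\le\, \eta + \tfrac{c(1+\eta)}{\CLOW},
\end{equation*}
so choosing $\CLOW$ sufficiently large gives $\|\zhat_{S^c}\|_\infty \le 1-\delta$ for some fixed $\delta \in (0, 1-\eta)$, which dominates $\tfrac{4R\tau_1\log p}{n\lambda} = O\!\big(\tfrac{k\log p}{n}\big)$ under the sample-size scaling. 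This verifies~\eqref{EqnStrictDual}.

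With (i)--(iii) in hand, Theorem~\ref{ThmSuppRecovery} delivers uniqueness of the stationary point and $\supp(\betahat)\subseteq S$, and Theorem~\ref{ThmEllInf}(a) yields
\begin{equation*}
\|\betahat-\betastar\|_\infty \,\le\, \|\GamHat_{SS}^{-1}\nabla\Loss_\numobs(\betastar)_S\|_\infty + \lambda\,\opnorm{\GamHat_{SS}^{-1}}_\infty \,\le\, c_\infty \cdot c\sqrt{\tfrac{\log p}{n}} + c_\infty \lambda,
\end{equation*}
which is~\eqref{EqnEllInfyLogistic} after absorbing constants. The principal obstacle is step (iii): unlike the least-squares setting of Corollary~\ref{CorOLSLoss} where $\GamHat = X^TX/n$ inherits incoherence from a population covariance via matrix concentration, here $\GamHat$ is the debiased surrogate and may fail to be positive semidefinite, so I must rely on the assumed sample-level incoherence~\eqref{EqnIncohLinear} and the sub-Gaussian deviation bound for $\gamhat - \GamHat\betastar$ rather than invoking population analogues. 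The remaining arithmetic --- in particular controlling the residual term $\gamhat-\GamHat\betastar$ componentwise --- follows the same sub-Gaussian concentration machinery used in step (ii).
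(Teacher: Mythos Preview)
Your proposal is correct and, in the key step, takes a more elementary route than the paper. Both arguments instantiate Theorems~\ref{ThmSuppRecovery} and~\ref{ThmEllInf} with $\mu=0$, verify RSC via the sub-Gaussian results of~\cite{LohWai13}, and control $\|\nabla\Loss_\numobs(\betastar)\|_\infty$ by standard concentration. The divergence is in how strict dual feasibility and the $\ell_\infty$-bound are established. You bound $\|\zhat_{S^c}\|_\infty \le \eta + \tfrac{1+\eta}{\lambda}\|\nabla\Loss_\numobs(\betastar)\|_\infty$ and $\|\GamHat_{SS}^{-1}\nabla\Loss_\numobs(\betastar)_S\|_\infty \le c_\infty\|\nabla\Loss_\numobs(\betastar)\|_\infty$ by directly invoking the \emph{assumed sample-level} bounds $\opnorm{\GamHat_{S^cS}\GamHat_{SS}^{-1}}_\infty\le\eta$ and $\opnorm{\GamHat_{SS}^{-1}}_\infty\le c_\infty$. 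The paper instead routes through its general Proposition~\ref{PropJerk} and then controls $\|\GamHat_{S^cS}\GamHat_{SS}^{-1}\nabla\Loss_\numobs(\betastar)_S\|_\infty$ and $\|\GamHat_{SS}^{-1}\nabla\Loss_\numobs(\betastar)_S\|_\infty$ via perturbation from the population matrix $\Gamma=\Sigma_x$ (Propositions~\ref{PropLinCase} and~\ref{PropLinEllInf}), which is where the extra hypothesis $n\ge k^2$ is actually used. Your argument is cleaner given the corollary's hypotheses; the paper's route is heavier but is framed to apply to other corrupted-covariate models where one may only have population-level, not sample-level, control.

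Two small cleanups: your displayed expression for $\zhat_{S^c}$ is notationally garbled (the ``$S^c\cup S$ blocks'' object is ill-defined), though the bound you extract from it is right; and the claim that $\tfrac{4R\tau_1\log p}{n\lambda}=O(k\log p/n)$ is incorrect under the scaling $R\asymp 1/\lambda\asymp\sqrt{n/\log p}$ --- the ratio is $O(1)$, so you need to argue (as in the Remark after Theorem~\ref{ThmSuppRecovery}) that constants can be chosen to make it smaller than your fixed $\delta$, rather than that it vanishes.
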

\noindent Note that if in addition, we have a lower bound of the form
$\betamin \geq 2 \left(c_3 \sqrt{\frac{\log p}{n}} + c_\infty \lambda\right)$, then we are guaranteed
that $\betahat$ is sign-consistent for $\betastar$. \\

Corollary~\ref{CorCorrLinearL1} may be understood as an extension of
part (a) of Corollary~\ref{CorOLSLoss}: it shows how the primal-dual
witness technique may be used even in a setting where the loss function is
nonconvex. Under the same incoherence
assumption~\eqref{EqnIncohLinear} and the familiar sample size scaling
$n \ge c_0 k \log p$ of the usual Lasso, stationary points of the
modified (nonconvex) Lasso program~\eqref{EqnCorrLinearL1} are also
support-recovery consistent.  Corollary~\ref{CorCorrLinearL1} also
implies the rather surprising result that, although the
objective~\eqref{EqnCorrLinearL1} is indeed nonconvex whenever $n < p$
and $\Sigma_w \succ 0$, it nonetheless has a unique stationary point
that is in fact equal to the global optimum. This result further
clarifies the simulation results appearing in Loh and
Wainwright~\cite{LohWai11a}. Indeed, those simulations are performed
with the setting $\Gamma = I_p$, so the incoherence
condition~\eqref{EqnIncohLinear} holds, with high probability, with $\eta$ close to
0. A careful inspection of the plots in Figure 2 of Loh and
Wainwright~\cite{LohWai11a} confirms the theoretical conclusion of
Corollary~\ref{CorCorrLinearL1}; more detailed simulations for
non-identity assignments of $\Gamma$ appear in Section~\ref{SecSims}
below. The proof of Corollary~\ref{CorCorrLinearL1} is provided in
Appendix~\ref{AppCorCorrLinearL1}.

%%%%%%%%%%%%%%%%%%%%%%%%%%%%%%%%%%%%%%%%%%%%%%%%%%%%%%%%%%%%%%%%%%%%%%%%%%%

\subsection{Generalized linear models}

To further illustrate the power of nonconvex regularizers, we now move
to the case where the loss function is the negative log likelihood of
a generalized linear model. We show that the incoherence condition
may again be removed if the regularizer $\rho_\lambda$ is $(\mu,
\gamma)$-amenable (as is the case for the SCAD and MCP regularizers).

For $\{(x_i, y_i)\}_{i=1}^n$ drawn from a GLM
distribution~\eqref{EqnCondGLM}, we take $\Omega = \real^p$ and
construct the composite objective
\begin{equation}
\label{EqnGLMLoss}
\betahat \in \arg\min_{\|\beta\|_1 \le R} \left\{\frac{1}{n}
\sum_{i=1}^n (\psi(x_i^T \beta) - y_i x_i^T \beta) +
\rho_\lambda(\beta)\right\}.
\end{equation}
We impose the following conditions on the covariates and the link
function:
\begin{assumption}
\label{AsGLM}
\mbox{}
\begin{itemize}
\item[(i)] The covariates are uniformly bounded as
$\|x_i\|_\infty \le M$, for all $i = 1, \ldots, \numobs$.
\item[(ii)] There are positive constants $\kappa_2$ and $\kappa_3$, such
  that $\|\psi''\|_\infty \le \kappa_2$ and $\|\psi'''\|_\infty \le
  \kappa_3$.
\end{itemize}
\end{assumption}
The conditions of Assumption~\ref{AsGLM}, although somewhat stringent,
are nonetheless satisfied in various settings of interest. In
particular, for logistic regression, we have $\psi(t) = \log(1 +
\exp(t))$, so
\begin{equation*}
\psi'(t) = \frac{\exp(t)}{1 + \exp(t)}, \qquad \psi''(t) =
\frac{\exp(t)}{(1 + \exp(t))^2}, \qquad \text{and} \qquad \psi'''(t) =
\frac{\exp(t)(1 - \exp(t))}{(1+\exp(t))^3},
\end{equation*}
and we may verify that the boundedness conditions in
Assumption~\ref{AsGLM}(ii) are satisfied with $\kappa_2 = 0.25$ and
$\kappa_3 = 0.1$. Also note that the uniform bound on $\psi'''$ is used
implicitly in the proof for support recovery consistency in the logistic regression analysis of
Ravikumar et al.~\cite{RavEtal10}, whereas the uniform bound on
$\psi''$ also appears in the conditions for $\ell_1$- and
$\ell_2$-consistency in other past work~\cite{NegRavWaiYu12,LohWai13}.
The uniform boundedness condition in Assumption~\ref{AsGLM}(i) is somewhat less
desirable: although it always holds for categorical data, it does not
hold for Gaussian covariates.  We suspect that is possible to
relax this constraint, but since our main goal is to illustrate the
more general theory, we keep it here. In what follows, let $Q^* \defn \E\left[\frac{1}{n}
  \sum_{i=1}^n \psi''(x_i^T \betastar) x_i x_i^T\right]$ denote the Fisher information matrix.

\begin{cor*}
\label{CorGLMLoss}
Under Assumption~\ref{AsGLM} and given a sample size $\numobs \geq c_0
\kdim^3 \log \pdim$, suppose $\rho_\lambda$ is $(\mu,
\gamma)$-amenable with $\mu < c_\psi \lambda_{\min}(\Sigma_x)$, where
$c_\psi$ is a constant depending only on $\psi$, and $(\lambda, R)$
are chosen such that $\|\betastar\|_1 \le \frac{R}{2}$ and $\CLOW
\sqrt{\frac{\log p}{n}} \le \lambda \le \frac{\CUP}{R}$.  Also suppose
that
\begin{equation*}
\opnorm{(Q^*_{SS})^{-1}}_\infty \le c_\infty, \quad \mbox{and} \quad
\betamin \ge \lambda(\gamma + 2c_\infty) + c_3 \sqrt{\frac{\log
    p}{n}}.
\end{equation*}
Then with probability at least $1 - c_1 \exp(-c_2 \min\{k, \log p\})$, the
nonconvex objective~\eqref{EqnGLMLoss} has a unique stationary point
$\betahat$ given by the oracle estimator $\boracle$, and
\begin{equation}
\label{EqnEllInftyGLM}
\|\betahat - \betastar\|_\infty \leq c_3 \sqrt{\frac{\log p}{n}}.
\end{equation}
\end{cor*}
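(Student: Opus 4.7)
The plan is to verify the hypotheses of Theorems~\ref{ThmSuppRecovery} and~\ref{ThmEllInf} for the GLM loss~\eqref{EqnLossGLM} paired with a $(\mu,\gamma)$-amenable regularizer, and then read off the conclusion from Theorem~\ref{ThmEllInf}(b). Three ingredients need to be supplied: an RSC condition for $\Loss_\numobs$, a tail bound on $\|\nabla \Loss_\numobs(\betastar)\|_\infty$ that permits the choice $\lambda \asymp \sqrt{\log p/n}$, and strict dual feasibility of the PDW dual vector. The RSC step is essentially standard: under Assumption~\ref{AsGLM}, the Hessian $\nabla^2\Loss_\numobs(\beta)=\frac{1}{n}\sum_i \psi''(x_i^T\beta)x_ix_i^T$ is uniformly bounded in operator norm and concentrates around $Q^*$ on bounded neighborhoods, and the argument of Negahban et al.~\cite{NegRavWaiYu12} delivers the RSC condition with $\alpha_1 \asymp c_\psi \lambda_{\min}(\Sigma_x)$, so that $2\alpha_1 - \mu > 0$ by hypothesis. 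The score $\nabla\Loss_\numobs(\betastar) = \frac{1}{n}\sum_i (\psi'(x_i^T\betastar)-y_i)x_i$ is a sum of independent, bounded, mean-zero random vectors, and Bernstein concentration yields $\|\nabla\Loss_\numobs(\betastar)\|_\infty \precsim \sqrt{\log p/n}$ with high probability. Choosing $\lambda \asymp \sqrt{\log p/n}$ and $R \asymp 1/\lambda$ then satisfies~\eqref{EqnLambdaCond}--\eqref{EqnRCond}.

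The main obstacle is strict dual feasibility without an incoherence assumption. The decisive leverage is the unbiasedness property of $(\mu,\gamma)$-amenability combined with the beta-min hypothesis. First I would show that the oracle estimator lies in the flat region of $\rho_\lambda$: matrix concentration (using the uniform bounds from Assumption~\ref{AsGLM}) promotes the population bound $\opnorm{(Q^*_{SS})^{-1}}_\infty \le c_\infty$ to its sample-based analogue $\opnorm{\Qhat_{SS}^{-1}}_\infty \le 2c_\infty$, and Taylor expanding the oracle KKT condition $\nabla\Loss_\numobs(\boracle)_S = 0$ around $\betastar$ yields $\|\boracle - \betastar\|_\infty \le 2c_\infty\|\nabla\Loss_\numobs(\betastar)_S\|_\infty + O(k\log p/n) \precsim \sqrt{\log p/n}$. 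The beta-min hypothesis then forces $|\boracle_j| > \gamma\lambda$ for every $j \in S$, so $\rho_\lambda'(\boracle_j)=0$ on $S$. Consequently, the restricted zero-subgradient condition reduces to $\nabla\Loss_\numobs(\betahat)_S = 0$, and by uniqueness of the oracle we obtain $\betahat_S = \boracle_S$.

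With this reduction in hand, the $S^c$-block of~\eqref{EqnZeroSub} reads $\zhat_{S^c} = -\lambda^{-1}\nabla\Loss_\numobs(\boracle)_{S^c}$, so strict dual feasibility becomes the scalar estimate $\|\nabla\Loss_\numobs(\boracle)_{S^c}\|_\infty < (1-\delta)\lambda$. I would obtain this by a second Taylor expansion $\nabla\Loss_\numobs(\boracle)_{S^c} = \nabla\Loss_\numobs(\betastar)_{S^c} + \Qhat_{S^cS}(\boracle_S - \betastar_S)$. The first summand is $\precsim \sqrt{\log p/n}$ by the score concentration; the second is bounded by combining the entrywise bound $\|\Qhat_{S^cS}\|_{\max} \le \kappa_2 M^2$ from Assumption~\ref{AsGLM}, a row-wise Cauchy--Schwarz estimate with $\|\boracle_S - \betastar_S\|_2 \precsim \sqrt{k\log p/n}$, and a third-derivative remainder bound $\precsim k^2\log p/n$ that invokes $\|\psi'''\|_\infty \le \kappa_3$. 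All three contributions are dominated by $\lambda \asymp \sqrt{\log p/n}$ once $n \succsim k^3\log p$, which is where the cubic-in-$k$ sample-size requirement enters; obtaining a clean constant $\delta$ here is the delicate part of the argument. Theorem~\ref{ThmSuppRecovery} then produces a unique stationary point, and Theorem~\ref{ThmEllInf}(b)---whose beta-min hypothesis matches the one assumed in the corollary---delivers the oracle identity $\betahat = \boracle$ together with~\eqref{EqnEllInftyGLM}, since $\|\Qhat_{SS}^{-1}\nabla\Loss_\numobs(\betastar)_S\|_\infty \le 2c_\infty\|\nabla\Loss_\numobs(\betastar)\|_\infty \precsim \sqrt{\log p/n}$.
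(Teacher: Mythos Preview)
Your overall architecture is sound and essentially mirrors the paper's route through Proposition~\ref{PropUnbiased}: your ``oracle first'' framing (show $|\boracle_{S,j}|>\gamma\lambda$, deduce that $\boracle_S$ is a zero-subgradient point of the restricted program, conclude $\betahat_S=\boracle_S$ by strict convexity, then read off $\zhat_{S^c}=-\lambda^{-1}\nabla\Loss_\numobs(\boracle)_{S^c}$) is logically equivalent to the paper's use of Lemma~\ref{LemSInfty}. The RSC verification, score concentration, and promotion of $\opnorm{(Q^*_{SS})^{-1}}_\infty$ to $\opnorm{(\Qhat_{SS})^{-1}}_\infty$ are all fine.

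The gap is in your bound on the term $\Qhat_{S^cS}(\boracle_S-\betastar_S)$. A row-wise Cauchy--Schwarz estimate using only the entrywise bound $\|\Qhat_{S^cS}\|_{\max}\le\kappa_2 M^2$ yields at best
\[
\|\Qhat_{S^cS}(\boracle_S-\betastar_S)\|_\infty \;\le\; \sqrt{k}\,\kappa_2 M^2\cdot\|\boracle_S-\betastar_S\|_2 \;\precsim\; k\sqrt{\frac{\log p}{n}},
\]
which is \emph{never} dominated by $\lambda\asymp\sqrt{\log p/n}$ for growing $k$, no matter how large $n$ is taken. (Using $\opnorm{\Qhat_{S^cS}}_\infty\cdot\|\boracle_S-\betastar_S\|_\infty$ gives the same $k\sqrt{\log p/n}$.) This is not a matter of the $k^3\log p$ scaling; the deterministic bound simply loses a factor of $k$ that cannot be absorbed.

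The paper's fix is to exploit the randomness in $\boracle_S-\betastar_S=-(\Qhat_{SS})^{-1}\nabla\Loss_\numobs(\betastar)_S$ rather than bound it deterministically. It splits $\Qhat_{S^cS}(\Qhat_{SS})^{-1}\nabla\Loss_\numobs(\betastar)_S$ into a population piece $T_1=\|Q^*_{S^cS}(Q^*_{SS})^{-1}\nabla\Loss_\numobs(\betastar)_S\|_\infty$ and a perturbation piece $T_2$. For $T_1$, the crucial (and easy to miss) fact is that bounded eigenvalues of $Q^*$ force $\|e_j^T Q^*_{S^cS}(Q^*_{SS})^{-1}\|_2=O(1)$ uniformly in $j\in S^c$ (since $\|Q^*_{jS}\|_2\le\|Q^*_{j\cdot}\|_2\le\lambda_{\max}(Q^*)$); because $Q^*$ is deterministic, each coordinate of $T_1$ is a fixed $O(1)$-norm linear functional of the mean-zero score and concentrates at the $\sqrt{\log p/n}$ rate. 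The perturbation $T_2$ is then controlled by showing $\max_j\|e_j^T\{\Qhat_{S^cS}(\Qhat_{SS})^{-1}-Q^*_{S^cS}(Q^*_{SS})^{-1}\}\|_2\precsim\sqrt{k^2\log p/n}$ (this is where $\kappa_3$ enters, via inequalities~\eqref{EqnYin}--\eqref{EqnZucchini}) and multiplying by $\|\nabla\Loss_\numobs(\betastar)_S\|_2\precsim\sqrt{k\log p/n}$; the product $k^{3/2}\log p/n$ is $\precsim\sqrt{\log p/n}$ precisely when $n\succsim k^3\log p$. Without this population-versus-sample decomposition, strict dual feasibility does not close.
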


It is worthwhile to compare Corollary~\ref{CorGLMLoss} with the analysis of
$\ell_1$-regularized logistic regression given in Ravikumar et
al.~\cite{RavEtal10} (see Theorem 1 in their paper).  Both results
require that the sample size is lower-bounded as $ \numobs \geq c_0
\kdim^3 \log \pdim$, but Ravikumar et al.~\cite{RavEtal10} also require
$Q^*$ to satisfy the incoherence condition
\begin{equation}
\label{EqnGLMIncoh}
\opnorm{Q^*_{S^cS} (Q^*_{SS})^{-1}}_\infty \le \eta < 1.
\end{equation}
As noted in their paper and by other authors, the
incoherence condition~\eqref{EqnGLMIncoh} is difficult to interpret
and verify for general GLMs. In contrast, Corollary~\ref{CorGLMLoss} shows that by
using a properly chosen nonconvex regularizer, this incoherence requirement may be
removed entirely.  In addition, Corollary~\ref{CorGLMLoss} is attractive in its generality, since it applies to more than just the logistic case with an
$\ell_1$-penalty and extends to various nonconvex problems
where the uniqueness of stationary points is not evident a priori. The proof of Corollary~\ref{CorGLMLoss} is contained in Appendix~\ref{AppCorGLMLoss}.

%%%%%%%%%%%%%%%%%%%%%%%%%%%%%%%%%%%%%%%%%%%%%%%%%%%%%%%%%%%%%%%%%%%%%%%%%%

\subsection{Graphical Lasso}
\label{SecGLasso}

Finally, we discuss the consequences of our theorems for the graphical
Lasso, as previously described in Section~\ref{SecLoss}.  Recall that
for the graphical Lasso, the observations consist of a collection
$\{x_i\}_{i=1}^n$ of $\pdim$-dimensional vectors, and the goal is to
recover the support of the inverse covariance matrix $\Thetastar =
(\Cov(X))^{-1}$.  The analysis here is different and more subtle,
because we seek a high-dimensional result in which the sample size
scales only with the \emph{row sparsity} of the inverse covariance
matrix, as opposed to the total number of of nonzero parameters
(which grows linearly with the matrix dimension, for any connected
graph).  

In order to prove such a result, we consider the constrained estimator
\begin{equation}
\label{EqnGlassoLoss}
\Thetahat \in \arg\min_{\Theta \in S_{++}^p, \; \opnorm{\Theta}_2 \le
  \kappa} \left\{\tr(\Sigmahat \Theta) - \log \det(\Theta) + \sum_{j
  \neq k} \rho_\lambda(\Theta_{jk})\right\},
\end{equation}
where $S_{++}^\pdim$ denotes the convex cone of symmetric, strictly
positive definite matrices, and we impose the spectral norm bound
$\opnorm{\Theta}_2 \leq \kappa$ on the estimate.\footnote{We
  denote the constraint radius by $\kappa$ rather than $R$, in order
  to emphasize the difference from previous situations.}  

A more standard choice would be to use the $\ell_1$-norm bound
$\sum_{i,j} |\Theta_{ij}| \le R$ as the side-constraint, as done in
our past work on Frobenius norm bounds for the
graphical Lasso with nonconvex regularizers~\cite{LohWai13}. However, as we will show
here, the formulation~\eqref{EqnGlassoLoss} actually leads to variable
selection consistency results under the milder scaling $n \succsim d^2
\log p$, rather than the scaling $n \succsim s \log
p$ obtained in our analysis on Frobenius norm bounds~\cite{LohWai13}. Here, we use
$d$ to denote the maximum number of nonzeros in any row/column of
$\Thetastar$, and $s \defn |\supp(\Thetastar)|$ to denote the total
number of nonzero entries.

A few remarks are in order. First, when $\rho_\lambda$ is the convex
$\ell_1$-penalty and $\kappa = \infty$, the
program~\eqref{EqnGlassoLoss} is identical to the standard graphical
Lasso (e.g., ~\cite{AspBanGha08,RavEtal11, FriedHasTib2007, Rot08}).
However, since we are interested in scenarios where $\rho_\lambda$ is
allowed to be nonconvex, we include an additional spectral norm
constraint governed by $\kappa$.  As a technical comment, note that
our original assumptions required $\Omega$ to be an open subset of
$\real^p$. In the analysis to follow, we handle the symmetry
constraint $\Theta = \Theta^T$ by treating the
program~\eqref{EqnGlassoLoss} as an optimization problem over the
space $\real^{\frac{p^2+p}{2}}$, and then take $\Omega$ to be the open
subset of $\real^{\frac{p^2+p}{2}}$ corresponding to positive definite
matrices. Doing so makes the program~\eqref{EqnGlassoLoss} consistent
with the framework laid out earlier in our paper. In this case, the
oracle estimator is defined by
\begin{equation}
\label{EqnThetaOracle}
\Thetahat^{\mathcal{O}} \defn \arg\min_{\Theta \succeq 0}
\left\{\tr(\Sigmahat \Theta) - \log \det(\Theta): \; \supp(\Theta)
\subseteq \supp(\Thetastar) \right\}.
\end{equation}
With this setup, we have the following guarantee:
\begin{cor*}
\label{CorGlassoLoss}
Given a sample size $\numobs \geq c_0 d^2 \log \pdim$, suppose
the $x_i$'s are drawn from a sub-Gaussian distribution, and the
regularizer $\rho_\lambda$ is $(\mu, \gamma)$-amenable. Also suppose
that
\begin{equation*}
\opnorm{\left(\Thetastar \otimes \Thetastar\right)_{SS}}_\infty \le
c_\infty, \quad \mbox{and} \quad \betamin \ge \lambda(\gamma +
2c_\infty) + c_3 \sqrt{\frac{\log p}{n}}.
\end{equation*}
Then with probability at
least $1 - c_1 \exp(-c_2 \log p)$, the program~\eqref{EqnGlassoLoss}
with $\kappa = \sqrt{\frac{2}{\mu}}$ has a unique stationary point
$\Thetahat$ given by the oracle estimator $\Theta^{\mathcal{O}}$, and
\begin{equation}
\label{EqnEllInftyGL}
\|\Thetahat - \Thetastar\|_{\max} \leq c_3 \sqrt{\frac{\log p}{n}}.
\end{equation}
\end{cor*}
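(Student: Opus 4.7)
The plan is to apply Theorem~\ref{ThmEllInf} to the graphical Lasso loss $\Loss_\numobs(\Theta) = \tr(\Sigmahat \Theta) - \log\det(\Theta)$, viewed as a function on the vector space of symmetric matrices, with the open convex set $\Omega$ equal to the positive definite cone and with the spectral-norm constraint $\opnorm{\Theta}_2 \le \kappa = \sqrt{2/\mu}$ playing the role of the feasibility constraint $\|\beta\|_1 \le R$. The role of the ``sparsity level'' is played by $d$ (the maximum row sparsity of $\Thetastar$), rather than the full support size $s$; it is precisely this substitution that drives the improved scaling $\numobs \succsim d^2 \log p$ claimed in the corollary.

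\textbf{Verifying the RSC and gradient conditions.} The Hessian is $\nabla^2 \Loss_\numobs(\Theta) = \Theta^{-1} \otimes \Theta^{-1}$, and for any symmetric $\Delta$ and any $\Theta$ in the feasible set, $\langle \Delta, \nabla^2 \Loss_\numobs(\Theta) \Delta \rangle = \|\Theta^{-1/2} \Delta \Theta^{-1/2}\|_F^2 \ge \|\Delta\|_F^2/\opnorm{\Theta}_2^2 \ge \|\Delta\|_F^2/\kappa^2$. Integrating along a chord inside the convex feasible set then yields the RSC inequality~\eqref{EqnRSC} with $\alpha_1 \asymp 1/\kappa^2$ and $\tau_1 = \tau_2 = 0$, so the composite problem has no non-sparse tolerance term. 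For the gradient, $\nabla \Loss_\numobs(\Thetastar) = \Sigmahat - \Sigmastar$, and standard sub-Gaussian concentration of the sample covariance gives $\|\Sigmahat - \Sigmastar\|_{\max} \precsim \sqrt{\log p/\numobs}$ with probability at least $1 - c_1 \exp(-c_2 \log p)$. With $\tau_1 = \tau_2 = 0$, the conditions~\eqref{EqnLambdaCond} and~\eqref{EqnRCond} of Theorem~\ref{ThmSuppRecovery} are satisfied by taking $\lambda \asymp \sqrt{\log p/\numobs}$ and $R = \kappa$, provided $\numobs \succsim d^2 \log p$.

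\textbf{PDW construction, strict dual feasibility, and oracle property.} I would run the matrix version of the PDW construction from Section~\ref{SecPDW}: solve the oracle-restricted program~\eqref{EqnThetaOracle} to obtain $\Thetahat_S$ (uniquely defined since the restricted loss is strongly convex under RSC and the regularizer is $\mu$-amenable), and then extend by the zero-subgradient equation~\eqref{EqnZeroSub} to produce $\Zhat_{S^c}$. Using the $(\mu,\gamma)$-amenability of $\rho_\lambda$ together with the beta-min assumption $\betamin \ge \lambda(\gamma + 2c_\infty) + c_3 \sqrt{\log p/\numobs}$, I would argue that every nonzero entry of $\Thetahat$ has magnitude at least $\gamma \lambda$, so that $\nabla q_\lambda(\Thetahat)_S = \lambda \sign(\Thetahat_S)$ and the oracle identification $\Thetahat = \Thetahat^{\mathcal{O}}$ holds. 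A first-order Taylor expansion of $\nabla \Loss_\numobs(\Thetahat)$ around $\Thetastar$ then reduces the verification of strict dual feasibility on $S^c$ to bounding a leading linear term governed by $\|\Sigmahat - \Sigmastar\|_{\max}$ and $\opnorm{(\Qhat_{SS})^{-1}}_\infty$, plus a higher-order remainder controlled by $\|\Thetahat - \Thetastar\|_F^2$; the hypothesized bound $\opnorm{(\Thetastar \otimes \Thetastar)_{SS}}_\infty \le c_\infty$ combined with a perturbation argument around $\Thetastar$ supplies the needed $\ell_\infty$-operator-norm bound on $(\Qhat_{SS})^{-1}$. Finally, Theorem~\ref{ThmEllInf}(b) gives the oracle equality $\Thetahat = \Thetahat^{\mathcal{O}}$ together with the bound~\eqref{EqnEllInftyGL}.

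\textbf{Main obstacle.} The main technical obstacle is extracting the sharp $\numobs \succsim d^2 \log p$ scaling rather than the $\numobs \succsim s \log p$ scaling that would follow from a literal application of Theorem~\ref{ThmSuppRecovery} with $k = s$. The key to the improvement is that the relevant quantities in both the strict-dual-feasibility check and the $\ell_\infty$-remainder estimates involve sparse-sparse interactions, i.e., products of at most $d$-row-sparse matrices, rather than the full $s$-sparse support. Carefully tracking this row-sparsity through the Taylor remainder (for example, via bounds of the form $\|AB\|_{\max} \le d \cdot \|A\|_{\max} \|B\|_{\max}$ for $d$-row-sparse $A, B$) is what allows the spectral-norm-constrained formulation~\eqref{EqnGlassoLoss} to outperform the $\ell_1$-constrained formulation from our earlier Frobenius-norm analysis. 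Once this bookkeeping is carried out, combining it with the RSC, gradient, and amenability ingredients above completes the proof via Theorem~\ref{ThmEllInf}.
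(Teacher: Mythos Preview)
Your high-level plan is reasonable, but the paper takes a different route, and there is a real gap in your argument as stated.

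The paper does not apply Theorems~\ref{ThmSuppRecovery} and~\ref{ThmEllInf} literally; those are formulated for an $\ell_1$-ball side constraint, and their proofs (notably the cone argument in Lemma~\ref{LemNuCone}) depend on that structure, so your substitution ``$R = \kappa$'' for the spectral constraint is not justified. Instead, the paper first shows (Lemma~\ref{LemGlassoConvex}) that with $\kappa = \sqrt{2/\mu}$ the program~\eqref{EqnGlassoLoss} is \emph{globally strictly convex} on the feasible set, which makes uniqueness automatic once any interior zero-subgradient point is exhibited---one may take $\delta = 0$ and bypass Theorem~\ref{ThmSuppRecovery} entirely.

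The more serious gap is a circularity in your construction: you want to take $\Thetahat = \Thetahat^{\mathcal{O}}$ and use the beta-min condition to conclude $|\Thetahat_{jk}| \ge \gamma\lambda$ on $S$, but that step needs an \emph{a priori} $\ell_\infty$-bound on $\Thetahat - \Thetastar$, which is precisely the conclusion~\eqref{EqnEllInftyGL}. The paper breaks this circle with a Brouwer fixed-point argument (Lemma~\ref{LemBrouwer}): it defines a continuous self-map of an $\ell_\infty$-ball of radius $r \asymp \sqrt{\log p/n}$ in $\real^{|S|}$, using the expansion $(\Thetastar + \Delta)^{-1} = \Sigmastar - \Sigmastar\Delta\Sigmastar + O(\Delta^2)$ and bounds of the form $\opnorm{\Delta}_\infty \le d\,\|\Delta\|_{\max}$ (this is exactly where $d$ rather than $s$ enters), and extracts a fixed point $\Thetahat$ satisfying both $\|\Thetahat - \Thetastar\|_{\max} \le r$ and $(\Sigmahat - \Thetahat^{-1})_S = 0$ simultaneously. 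With this $\ell_\infty$-bound already in hand, the beta-min condition forces $\rho'_\lambda(\Thetahat_{jk}) = 0$ on $S$, so $\Thetahat$ is a zero-subgradient point; dual feasibility on $S^c$ is then verified via Proposition~\ref{PropUnbiased}, with the perturbation bounds on $\Qhat$ again controlled in terms of $d$. Your ``main obstacle'' paragraph correctly identifies the $d$-versus-$s$ issue, but the resolution is this Brouwer construction, not the Taylor-remainder bookkeeping you sketch.
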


Moreover, as shown in the proof in Appendix~\ref{AppCorGlassoLoss},
the support containment condition and elementwise
bound~\eqref{EqnEllInftyGL} also imply bounds on the Frobenius and
spectral norms of the error, namely
\begin{equation}
\label{EqnSpectralGL}
\opnorm{\Thetahat - \Thetastar}_F \leq c_3 \, \sqrt{\frac{s \log
    p}{n}}, \qquad \text{and} \qquad \opnorm{\Thetahat - \Thetastar}_2
\leq c_3 \, \min\{\sqrt{s}, d\} \cdot \sqrt{\frac{\log p}{n}}.
\end{equation}

We reiterate that Corollary~\ref{CorGlassoLoss} does
\emph{not} involve any restrictive incoherence assumptions on the
matrix $\Thetastar$.  As remarked in past work~\cite{Mei08,RavEtal11},
such incoherence conditions for the graphical Lasso are very
restrictive---much more so than the corresponding incoherence
conditions for graph recovery using neighborhood
regression~\cite{MeiBuh06,Zhao06,Wai09}. Thus, in this setting, our corollary illustrates
another distinct advantage of nonconvex regularization.

%%%%%%%%%%%%%%%%%%%%%%%%%%%%%%%%%%%%%%%%%%%%%%%%%%%%%%%%%%%%%%%%%%%%%%%

\section{Simulations}
\label{SecSims}

In this section, we report the results of various simulations that we ran
in order to verify our theoretical results.

\subsection{Optimization algorithm}

We begin by describing the algorithm we use to optimize the
program~\eqref{EqnMEst}. We rewrite the program as
\begin{equation}
\label{EqnSplit}
\betahat \in \arg\min_{\|\beta\|_1 \le R, \; \beta \in \Omega}
\Big\{\underbrace{\Loss_\numobs(\beta) -
  q_\lambda(\beta)}_{\Lossbar_n(\beta)} + \lambda \|\beta\|_1\Big\},
\end{equation}
and apply the composite gradient descent algorithm due to
Nesterov~\cite{Nes07}. The updates of the composite gradient procedure
are given by
\begin{equation}
\label{EqnNesterov}
\beta^{t+1} \in \arg \min_{\|\beta\|_1 \le R, \; \beta \in \Omega}
\left\{\frac{1}{2} \left\|\beta - \left(\beta^t - \frac{\nabla
  \Lossbar_n(\beta^t)}{\eta}\right)\right\|_2^2 + \frac{\lambda}{\eta}
\|\beta\|_1\right\},
\end{equation}
where $\frac{1}{\eta}$ is the stepsize.

In the particular simulations of this section, we take $\Omega =
\real^p$. Then the iterates~\eqref{EqnNesterov} have the convenient
closed-form expression
\begin{equation}
\label{EqnUpdate}
\beta^{t+1} = S_{\lambda/\eta}\left(\beta^t - \frac{\nabla
  \Lossbar_n(\beta^t)}{\eta}\right), \qquad \mbox{where
  $S_{\lambda/\eta}(\beta_j) = \sign(\beta_j)\left(|\beta_j| -
  \frac{\lambda}{\eta}\right)_+$.}
\end{equation}
The following proposition guarantees the computational efficiency of
the general composite gradient descent
algorithm~\eqref{EqnNesterov}. For ease of analysis, we assume that
$\|\betastar\|_2 \le 1$, and the Taylor error $\scriptT(\beta_1,
\beta_2) \defn \Loss_\numobs(\beta_1) - \Loss_\numobs(\beta_2) -
\inprod{\nabla \Loss_\numobs(\beta_2)}{\beta_1 - \beta_2}$ satisfies
the following restricted strong convexity condition, for all $\beta_2
\in \ball_2(3) \cap \ball_1(R)$:
\begin{subnumcases}{
\label{EqnRSCt}
\scriptT(\beta_1, \beta_2) \geq}
\label{EqnT1RSC}
\alpha_1 \|\Delta\|_2^2 - \tau_1 \frac{\log p}{n} \|\Delta\|_1^2,
\quad \quad \forall \|\Delta\|_2 \leq 3, \\
\label{EqnT2RSC}
\alpha_2 \|\Delta\|_2 - \tau_2 \sqrt{\frac{\log p}{n}} \|\Delta\|_1,
\quad \forall \|\Delta\|_2 \geq 3,
\end{subnumcases}
as well as the restricted smoothness condition
\begin{equation}
\label{EqnRSMt}
\scriptT(\beta_1, \beta_2) \le \alpha_3 \|\beta_1 - \beta_2\|_2^2 +
\tau_3 \frac{\log p}{n} \|\beta_1 - \beta_2\|_1^2, \qquad \forall
\quad \beta_1, \beta_2 \in \Omega.
\end{equation}
We also assume for simplicity that $q_\lambda$ is convex, as is the
case for all the regularizers studied in this paper.
	
In the following statement, we let $\betahat$ be the unique global
optimum of the program~\eqref{EqnSplit}. Also denote $\phi(\beta) \defn \Loss_\numobs(\beta) +
\rho_\lambda(\beta)$.
\begin{prop*}
\label{PropOpt}
Suppose $\Loss_\numobs$ satisfies the RSC~\eqref{EqnRSCt} and
RSM~\eqref{EqnRSMt} conditions, and assume the regularizer $\rho_\lambda$ is
$\mu$-amenable with $\frac{\mu}{2} < \alpha \defn \min\{\alpha_1,
\alpha_2\}$, and $q_\lambda$ is convex.  Let the scalars $(R, \lambda)$ be chosen to satisfy the
bounds $R \sqrt{\frac{\log p}{n}} \le c$ and $\lambda \ge 4 \;
\max\left\{\|\nabla \Loss_\numobs(\betastar)\|_\infty, \; \tau
\sqrt{\frac{\log p}{n}}\right\}$.  Then for any stepsize parameter
$\eta \ge \max\{2\alpha_3 - \mu, \mu\}$ and tolerance $\delta
\geq c_0 \sqrt{\frac{k \log p}{n}} \cdot \|\betahat - \betastar\|_2$,
the iterates of the composite gradient descent
algorithm~\eqref{EqnNesterov} satisfy the $\ell_2$-bound
\begin{equation*}
\|\beta^t - \betahat\|_2 \leq c_1 \cdot \frac{\delta}{(2\alpha - \mu)^{1/2}},
\qquad \mbox{for all iterations $t \ge T^*(\delta)$,}
\end{equation*}
where $T^*(\delta) \defn \frac{2\log\left(\frac{\phi(\beta^0) -
    \phi(\betahat)}{\delta^2}\right)}{\log(1/\kappa)} + \left(1 +
\frac{\log 2}{\log(1/\kappa)}\right) \log \log \left(\frac{\lambda
  R}{\delta^2}\right)$.
\end{prop*}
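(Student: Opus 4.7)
The strategy is to recast the problem as composite gradient descent on the decomposition $\phi(\beta) = \Lossbar_n(\beta) + \lambda\|\beta\|_1$, where $\Lossbar_n = \Loss_n - q_\lambda$. Because $q_\lambda$ is assumed convex and $\rho_\lambda$ is $\mu$-amenable (so that $q_\lambda''$ lies in $[0,\mu]$ in a generalized sense, since $\rho_\lambda + \tfrac{\mu}{2}(\cdot)^2$ is convex), the new loss $\Lossbar_n$ inherits a shifted version of both the RSC and RSM conditions: it satisfies RSC with curvature $\alpha - \mu$ (with the same tolerance terms), and RSM with smoothness parameter at most $\alpha_3 - \tfrac{\mu}{2}$ when combined with the stepsize lower bound $\eta \ge 2\alpha_3 - \mu$. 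The requirement $\eta \ge \mu$ guarantees that the effective quadratic upper model used in each step dominates the curvature of $-q_\lambda$. Hence the update~\eqref{EqnNesterov} is exactly a proximal-gradient step on a function that, although nonconvex globally, satisfies an $\ell_1$-restricted form of strong convexity.

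The second step is to prove one-step geometric decrease of the objective gap. Using the standard descent lemma for proximal-gradient methods and the restricted smoothness bound for $\Lossbar_n$, one gets $\phi(\beta^{t+1}) \le \phi(\beta^t) + \inprod{\nabla\Lossbar_n(\beta^t)}{\beta^{t+1}-\beta^t} + \tfrac{\eta}{2}\|\beta^{t+1}-\beta^t\|_2^2 + \lambda\|\beta^{t+1}\|_1 - \lambda\|\beta^t\|_1$, and using optimality of $\beta^{t+1}$ together with the RSC lower bound applied at $\betahat$, one derives
\begin{equation*}
\phi(\beta^{t+1}) - \phi(\betahat) \;\le\; \kappa\,\bigl(\phi(\beta^t) - \phi(\betahat)\bigr) + \xi^2,
\end{equation*}
for a contraction factor $\kappa \in (0,1)$ that depends on $\alpha, \mu, \eta$, and a statistical tolerance $\xi^2 \asymp \tau_1 \tfrac{\log p}{n}\|\beta^t - \betahat\|_1^2$. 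The main obstacle is controlling the $\ell_1$-side term: the RSC is only meaningful when $\|\beta^t - \betahat\|_1$ is $O(\sqrt{k})\|\beta^t - \betahat\|_2$, so one must show that the iterates stay inside a sparsity-cone-like region.

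This is handled by a two-phase argument. In the first (outer) phase, one shrinks a working radius $R_t$ for $\|\beta^t - \betahat\|_1$ geometrically: starting from the trivial bound $\|\beta^t - \betahat\|_1 \le 2R$, each halving of $R_t$ requires $O(\log(1/\kappa)^{-1})$ iterations, producing the $\log\log(\lambda R/\delta^2)$ term in $T^*(\delta)$. Once $R_t$ is small enough that the $\tau_1\tfrac{\log p}{n}R_t^2$ term is absorbed, one enters the linear-convergence regime of the inner phase, producing the leading $\tfrac{2\log((\phi(\beta^0)-\phi(\betahat))/\delta^2)}{\log(1/\kappa)}$ term. Throughout, the sparsity of $\betahat - \betastar$ (guaranteed by Theorem~\ref{ThmSuppRecovery}) and standard cone-type lemmas relating $\|\Delta\|_1$ to $\sqrt{k}\|\Delta\|_2$ ensure that the tolerance $\delta \succsim \sqrt{k\log p/n}\,\|\betahat-\betastar\|_2$ suffices.

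The final step converts the objective bound into the $\ell_2$ bound. After $t \ge T^*(\delta)$ iterations, $\phi(\beta^t) - \phi(\betahat) \le \delta^2$; applying the RSC condition~\eqref{EqnT1RSC} to the gap at $\beta^t - \betahat$ (and bounding the linear $\lambda\|\cdot\|_1$ discrepancy via sparsity) yields $(2\alpha - \mu)\|\beta^t - \betahat\|_2^2 \lesssim \delta^2$, giving the stated rate $\|\beta^t-\betahat\|_2 \le c_1 \delta/(2\alpha-\mu)^{1/2}$. The technical heart of the argument is the bookkeeping between the nonconvex Taylor error $\scriptT$, the convex surrogate gap introduced by subtracting $q_\lambda$, and the iterate-dependent tolerances in RSC; the proof essentially mirrors the analysis of composite gradient descent for nonconvex $M$-estimators in our earlier work~\cite{LohWai13}, adapted here to accommodate the additional side constraint $\|\beta\|_1 \le R$ and the possibly open set $\Omega$.
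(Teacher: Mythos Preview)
Your approach matches the paper's: both argue that $\Lossbar_n = \Loss_n - q_\lambda$ inherits shifted RSC/RSM bounds from $\Loss_n$ and then defer entirely to the epoch-based analysis of Theorem~3 in \cite{LohWai13}, with the side constraint $\|\beta\|_1 \le R$ handled verbatim as there. One small correction to your bookkeeping: convexity of $q_\lambda$ only yields $\widebar{\scriptT}(\beta_1,\beta_2) \le \scriptT(\beta_1,\beta_2)$, so the RSM parameter for $\Lossbar_n$ is $\alpha_3$, not $\alpha_3 - \tfrac{\mu}{2}$ (the paper states exactly this); similarly, $\mu$-amenability gives $\widebar{\scriptT} \ge \scriptT - \tfrac{\mu}{2}\|\cdot\|_2^2$, so the inherited RSC curvature is $\alpha - \tfrac{\mu}{2}$ rather than $\alpha - \mu$, which is consistent with the $(2\alpha - \mu)$ in the final bound.
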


We sketch the proof of Proposition~\ref{PropOpt} in
Appendix~\ref{AppPropOpt}. It establishes that the composite gradient
descent algorithm~\eqref{EqnNesterov} converges geometrically up to
tolerance \mbox{$\delta \asymp \sqrt{\frac{k \log p}{n}}
  \|\betahat - \betastar\|_2$;} moreover, only $T^*(\delta) \asymp \log(1/\delta)$ iterations are necessary.

Since we are interested in $\ell_\infty$-error bounds, we state a simple corollary to Proposition~\ref{PropOpt} that ensures convergence of the iterates in $\ell_\infty$-norm, up to accuracy $\order\left(\sqrt{\frac{\log p}{n}}\right)$, assuming statistical consistency of the global optimum and the scaling $n \succsim k^2 \log p$:

\begin{cor*}
\label{CorOpt}
Suppose, in addition to the assumptions of the previous proposition, that \mbox{$\|\betahat - \betastar\|_2 \le c \sqrt{\frac{k \log p}{n}}$}, and the sample size is lower-bounded as $n \geq c_0 k^2 \log p$. Then
the iterates of the composite gradient descent
algorithm~\eqref{EqnNesterov} satisfy the $\ell_\infty$-bound
\begin{equation}
\label{EqnEllInftyIterations}
\|\beta^t - \betahat\|_\infty \leq c_1'\cdot  \frac{1}{(2\alpha -
  \mu)^{1/2}} \sqrt{\frac{\log p}{n}}, \qquad \mbox{for all $t \ge
  T^*(\delta)$.}
\end{equation}
\end{cor*}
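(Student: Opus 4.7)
The plan is to pass from the $\ell_2$-convergence guarantee of Proposition~\ref{PropOpt} to an $\ell_\infty$-bound by choosing the tolerance $\delta$ as small as the proposition permits, then using the hypothesized sample size scaling $n \geq c_0 k^2 \log p$ to absorb the remaining $k$-dependence.

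First, I would apply Proposition~\ref{PropOpt} with the smallest admissible tolerance, namely $\delta \asymp \sqrt{\tfrac{k \log p}{n}} \cdot \|\betahat - \betastar\|_2$. Substituting the hypothesized $\ell_2$-bound $\|\betahat - \betastar\|_2 \leq c \sqrt{\tfrac{k \log p}{n}}$ yields $\delta \asymp \tfrac{k \log p}{n}$. The proposition then guarantees that for all $t \geq T^*(\delta)$,
\begin{equation*}
\|\beta^t - \betahat\|_2 \;\leq\; \frac{c_1}{(2\alpha - \mu)^{1/2}} \cdot \frac{k \log p}{n}.
\end{equation*}

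Second, I would use the trivial inequality $\|\beta^t - \betahat\|_\infty \leq \|\beta^t - \betahat\|_2$. Although this bound is lossy in general, it suffices here because the sample size assumption absorbs the extra factor of $k$. Concretely, $n \geq c_0 k^2 \log p$ is equivalent to $k \sqrt{\log p / n} \leq 1/\sqrt{c_0}$, whence
\begin{equation*}
\frac{k \log p}{n} \;=\; k \sqrt{\frac{\log p}{n}} \cdot \sqrt{\frac{\log p}{n}} \;\leq\; \frac{1}{\sqrt{c_0}} \sqrt{\frac{\log p}{n}}.
\end{equation*}
Combining with the previous display gives the claimed bound~\eqref{EqnEllInftyIterations} with $c_1' = c_1/\sqrt{c_0}$.

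There is no real obstacle to overcome: the argument is a short chain of substitutions and the main ingredient, geometric convergence to an $\order(\tfrac{k \log p}{n})$-sized ball around $\betahat$, is already supplied by Proposition~\ref{PropOpt}. The only point worth flagging is the tightness of the sample size requirement: the scaling $n \gtrsim k^2 \log p$ is exactly what is needed to convert an $\ell_2$-tolerance of order $\tfrac{k \log p}{n}$ into an $\ell_\infty$-tolerance of order $\sqrt{\tfrac{\log p}{n}}$ via the crude $\ell_\infty \leq \ell_2$ inequality. A sharper approach exploiting the approximate sparsity of $\beta^t - \betahat$ (for instance, via the support recovery guarantees of Theorem~\ref{ThmSuppRecovery}) could potentially weaken this to $n \gtrsim k \log p$, but such refinements are unnecessary for the stated corollary.
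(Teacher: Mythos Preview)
Your proposal is correct and matches the paper's own proof essentially step for step: choose $\delta$ at the minimal admissible level $\delta \asymp \sqrt{k\log p/n}\,\|\betahat-\betastar\|_2$, invoke Proposition~\ref{PropOpt}, bound $\ell_\infty$ by $\ell_2$, plug in the statistical rate for $\|\betahat-\betastar\|_2$, and then use $n \gtrsim k^2\log p$ to absorb the extra factor of $k$. Your closing remark about the tightness of the $k^2\log p$ scaling is a nice observation that goes slightly beyond what the paper says.
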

\noindent The proof of Corollary~\ref{CorOpt} is a simple consequence
of Proposition~\ref{PropOpt} and is supplied in
Appendix~\ref{AppCorOpt}. Note that the bound $\|\betahat - \betastar\|_2 \le c \sqrt{\frac{k \log p}{n}}$ holds, with high probability, as a consequence of Lemma~\ref{LemEll2Bounds} in Appendix~\ref{AppLemEll2}. Corollary~\ref{CorOpt} has a natural consequence for support
recovery: Suppose the estimate $\betahat$ satisfies an
$\ell_\infty$-bound of the form $\|\betahat - \betastar\|_\infty \leq
c_3 \sqrt{\frac{\log p}{n}}$, as guaranteed by
Theorem~\ref{ThmEllInf}.  Combining the
$\ell_\infty$-bound~\eqref{EqnEllInftyIterations} on iterate $\beta^t$
with the triangle inequality, we are then guaranteed that
\begin{align*}
\|\beta^t - \betastar\|_\infty & \leq  c_3' \sqrt{\frac{\log p}{n}},
\end{align*}
so the composite gradient descent algorithm
converges to a vector with the correct support, provided
\mbox{$\betamin \ge 2c_3' \sqrt{\frac{\log p}{n}}$.}

%%%%%%%%%%%%%%%%%%%%%%%%%%%%%%%%%%%%%%%%%%%%%%%%%%%%%%%%%%%%%%

\subsection{Classes of matrices}
\label{SecMatClass}

Next, we describe two classes of matrices to be used in our
simulations.  The first class consists of matrices that do \emph{not}
satisfy the incoherence conditions, although the maximum and minimum
eigenvalues are bounded by constants. We define
\begin{equation}
\label{EqnNonincoh}
	M_1(\theta) = \left( \begin{array}{cccccccc} 1 & 0 & \cdots &
          0 & \theta & 0 & \cdots & 0 \\ 0 & 1 & \cdots & 0 & \theta &
          0 & \cdots & 0 \\ 0 & 0 & \ddots & 0 & \vdots & \vdots & &
          \vdots \\ 0 & 0 & \cdots & 1 & \theta & 0 & \cdots & 0
          \\ \theta & \theta & \cdots & \theta & 1 & 0 & \cdots & 0
          \\ 0 & 0 & \cdots & 0 & 0 & 1 & \cdots & 0 \\ \vdots &
          \vdots & & \vdots & \vdots & \vdots & \ddots & \vdots \\ 0 &
          0 & \cdots & 0 & 0 & 0 & \cdots & 1
	\end{array}
	\right).
\end{equation}
Hence, $M_1(\theta)$ is a matrix with 1's on the diagonal, $\theta$'s in
the first $k$ positions of the $(k+1)^\text{st}$ row and column, and
0's everywhere else. The following lemma, proved in
Appendix~\ref{AppLemGam1}, provides the incoherence parameter and
eigenvalue bounds for $M_1(\theta)$ as a function of $\theta$:
\begin{lem*}
\label{LemGam1}
With the shorthand notation $\Gamma = M_1(\theta)$ and $S = \{1,
\dots, k\}$, the incoherence parameter is given by
$\opnorm{\Gamma_{S^cS} \Gamma_{SS}^{-1}}_\infty = k\theta$, and the
minimum and maximum eigenvalues are given by
\mbox{$\lambda_{\min}(\Gamma) = 1 - \theta \sqrt{k}$} and
\mbox{$\lambda_{\max}(\Gamma) = 1 + \theta \sqrt{k}$.}
\end{lem*}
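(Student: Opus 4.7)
The plan is to exploit the very simple block structure of $\Gamma = M_1(\theta)$. Partitioning the rows and columns into $S = \{1,\dots,k\}$, the singleton $\{k+1\}$, and $T = \{k+2,\dots,p\}$, the matrix decomposes as
\begin{equation*}
\Gamma \;=\; \begin{pmatrix} I_k & \theta \mathbf{1}_k & 0 \\ \theta \mathbf{1}_k^T & 1 & 0 \\ 0 & 0 & I_{p-k-1} \end{pmatrix},
\end{equation*}
where $\mathbf{1}_k \in \real^k$ is the all-ones vector. So all the substantive content lives in the $(k+1)\times(k+1)$ leading principal block.

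For the incoherence quantity, I would note that $\Gamma_{SS} = I_k$, so $\Gamma_{SS}^{-1} = I_k$ and therefore $\Gamma_{S^cS}\Gamma_{SS}^{-1} = \Gamma_{S^cS}$. By inspection, $\Gamma_{S^cS}$ has exactly one nonzero row, namely the row indexed by $k+1$, which equals $\theta\mathbf{1}_k^T$; every other row is zero. The matrix $\ell_\infty$ operator norm is the maximum absolute row sum, hence $\opnorm{\Gamma_{S^cS}\Gamma_{SS}^{-1}}_\infty = k\theta$.

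For the eigenvalues, the trailing $I_{p-k-1}$ block contributes eigenvalue $1$ with multiplicity $p-k-1$, so the extremal eigenvalues come from the $(k+1)\times(k+1)$ leading block
\begin{equation*}
B \;=\; \begin{pmatrix} I_k & \theta \mathbf{1}_k \\ \theta \mathbf{1}_k^T & 1 \end{pmatrix}.
\end{equation*}
I would split $\real^{k+1}$ according to the decomposition of the first $k$ coordinates into $\operatorname{span}(\mathbf{1}_k)$ and its orthogonal complement. For any $v = (x,0)$ with $\mathbf{1}_k^T x = 0$, one checks directly that $Bv = v$, giving eigenvalue $1$ with multiplicity $k-1$. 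On the complementary two-dimensional invariant subspace spanned by $(\mathbf{1}_k,0)$ and $(0,1)$, the restriction of $B$ is represented by the $2\times 2$ matrix $\left(\begin{smallmatrix} 1 & \theta \\ \theta k & 1 \end{smallmatrix}\right)$ in this basis, whose eigenvalues satisfy $(\lambda-1)^2 = \theta^2 k$, i.e.\ $\lambda = 1\pm\theta\sqrt{k}$.

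Collecting the contributions, the spectrum of $\Gamma$ is $\{1-\theta\sqrt{k},\,1+\theta\sqrt{k}\}$ together with $1$ of multiplicity $p-2$, which yields the claimed extremes $\lambda_{\min}(\Gamma) = 1-\theta\sqrt{k}$ and $\lambda_{\max}(\Gamma) = 1+\theta\sqrt{k}$. There is no real obstacle here beyond picking the right basis on the leading block; once the block-plus-identity decomposition is written down the verification is purely mechanical.
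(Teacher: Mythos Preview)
Your proof is correct. The incoherence computation matches the paper's (both observe $\Gamma_{SS}=I_k$ so the calculation is immediate). For the eigenvalues, however, your route differs from the paper's: you identify an explicit invariant-subspace decomposition, reducing to a $2\times 2$ block whose characteristic polynomial gives $1\pm\theta\sqrt{k}$ directly, and thereby compute the full spectrum. The paper instead works variationally, writing the Rayleigh quotient as $v^T\Gamma v = 1 + 2\theta\, v_{k+1}\sum_{j=1}^k v_j$ for unit $v$, then optimizing $v_{k+1}\sum_{j=1}^k v_j$ over the sphere via Cauchy--Schwarz and a one-parameter maximization in $\alpha=v_{k+1}$, finally exhibiting the optimizing vectors. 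Your approach is arguably cleaner here---it yields all eigenvalues (including multiplicities) at once and avoids the separate upper-bound-plus-tightness step---while the paper's Rayleigh-quotient method would generalize more readily to perturbations where an exact invariant subspace is not available.
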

%%%%%%%%%%%%%%%%%%%%%%%%%%%%%%%%%%%%%%%%%%%%%%%%%%%%%%%%%%%%%%%%%%%%%%%%

\noindent In particular, if $\theta \in \left(\frac{1}{k},
\frac{1}{\sqrt{k}}\right)$, Lemma~\ref{LemGam1} ensures that
$M_1(\theta)$ has bounded eigenvalues but does \emph{not} satisfy the
incoherence condition. \\

The second class of matrices is known as the \emph{spiked identity}
model~\cite{JohLu09} or \emph{constant correlation}
model~\cite{Zhao06}. We define the class according to
\begin{equation}
\label{EqnSpiked}
M_2(\theta) = \theta \mathbf{1} \mathbf{1}^T + (1-\theta) I_p,
\end{equation}
where $\theta \in [0,1]$ and $\mathbf{1} \in \real^p$ denotes the all
$1$'s vector.  An easy calcuation shows that
$\lambda_{\min}(M_2(\theta)) = 1 - \theta$ and
$\lambda_{\max}(M_2(\theta)) = 1 + \theta(k-1)$, whereas the
incoherence parameter is given by $\frac{\theta k}{1 + \theta(k-1)} <
1$ (see Corollary 1 of the paper~\cite{Zhao06}).

%%%%%%%%%%%%%

\subsection{Experimental results}

We ran experiments with the loss function coming from (a) ordinary
least squares linear regression, (b) least squares linear regression
with corrupted covariates, and (c) logistic regression. For all our
simulations, we used the regularization parameters $R = 1.1 \,
\|\betastar\|_1$ and $\lambda = \sqrt{\frac{\log p}{n}}$, and we set
the SCAD and MCP parameters to be $a = 2.5$ and $b = 1.5$,
respectively. Note that although the covariates in our simulations for
logistic regression do not satisfy the boundedness
Assumption~\ref{AsGLM}(i) imposed in our corollary, the generated plots
still agree qualitatively with our predicted theoretical results. \\

In our first set of simulations, we show that using the SCAD or MCP
regularizer in situations where the design matrix does not satisfy
incoherence conditions still results in an estimator that is variable
selection consistent. We generated i.i.d.\ covariates $x_i \sim N(0,
\Sigma_x)$, where $\Sigma_x = M_1(\theta)$ was obtained from the
family of non-incoherent matrices~\eqref{EqnNonincoh}, with $\theta =
\frac{2.5}{k}$. We chose $k \approx \sqrt{p}$ and $\betastar =
\left(\frac{1}{\sqrt{k}}, \cdots, \frac{1}{\sqrt{k}}, 0, \cdots,
0\right)$, the unit vector with the first $k$ components equal to
$\frac{1}{\sqrt{k}}$, and generated response variables according to
the linear model $y_i = x_i^T \betastar + \epsilon_i$, where
$\epsilon_i \sim N(0, (0.1)^2)$. In addition, we generated corrupted
covariates $z_i = x_i + w_i$, where $w_i \sim N(0, (0.2)^2)$, and $w_i
\condind x_i$. We then ran the composite gradient descent algorithm
with updates given by equation~\eqref{EqnUpdate}, where the loss
function is given by equation~\eqref{EqnLossLinear}, and $(\GamHat,
\gamhat)$ are defined as in
equation~\eqref{EqnGamCorrupted}. Figure~\ref{FigOLSCorr} shows the
results of our simulations for the problem sizes $p = 128, 256$, and
$256$. In panel (a), we see that the probability of correct support
recovery transitions sharply from 0 to 1 as the sample size increases
and $\rho_\lambda$ is the SCAD or MCP regularizer. In contrast, the probability
of recovering the correct support remains at 0 when $\rho_\lambda$ is
the $\ell_1$-penalty or LSP---by the structure of $\Sigma_x$,
regularization with the $\ell_1$-penalty or LSP results in an
estimator $\betahat$ that puts nonzero weight on the $(k+1)^\text{st}$
coordinate, as well. Note that we have rescaled the horizontal axis
according to $\frac{n}{k \log p}$ in order to match the scaling prescribed by
our theory; the three sets of curves for each regularizer roughly
align, as predicted by Theorem~\ref{ThmSuppRecovery}. Panel (b)
confirms that the $\ell_\infty$-error $\|\betahat - \betastar\|_\infty
$ decreases to 0 when using the SCAD and MCP, as predicted by
Theorem~\ref{ThmEllInf}. Finally, we plot the $\ell_2$-error
$\|\betahat - \betastar\|_2$ of the SCAD and MCP regularizers
alongside the $\ell_2$-error for the $\ell_1$-penalty and LSP in panel
(c). Although the $\ell_2$-error is noticeably smaller for SCAD and
MCP than for the $\ell_1$-penalty and LSP, as noted by previous
authors~\cite{FanLi01, BreHua11, MazEtal11}, all four regularizers are
nonetheless consistent in $\ell_2$-error, since a lower-eigenvalue
bound on the covariance matrix of the design is sufficient for
$\ell_2$-consistency~\cite{LohWai13}. For our choice of regularization
parameters, where the same value of $\lambda$ is shared between the
$\ell_1$-penalty and LSP, the two sets of curves for the
$\ell_1$-penalty and LSP nearly agree; as shown in Candes et
al.~\cite{CanEtal08}, the relative improvement of the LSP in
comparison to the $\ell_1$-penalty may vary widely depending on the
regularization parameter. \\

\begin{center}
\begin{figure}
	\begin{tabular}{cc}
		\includegraphics[width=7cm]{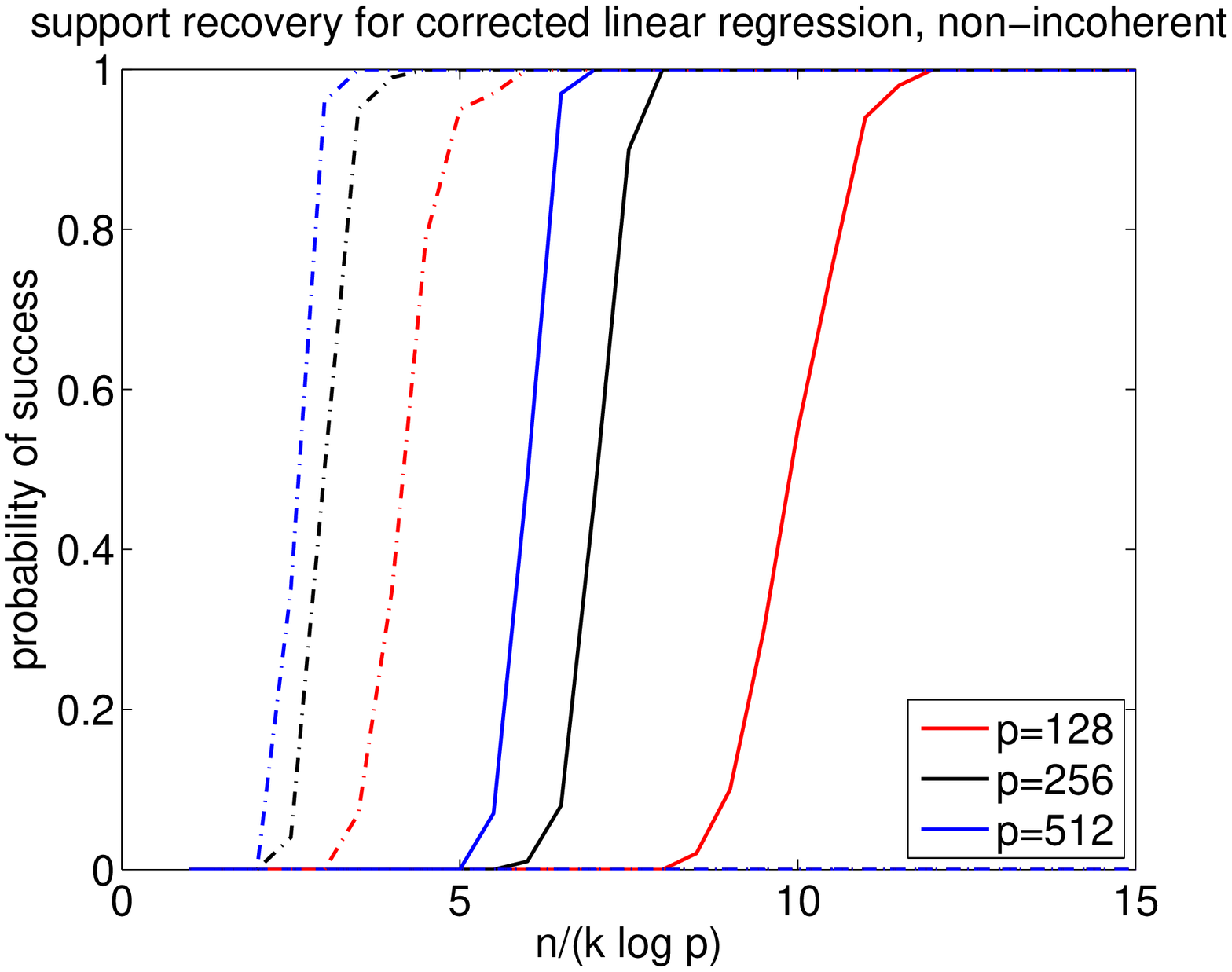}
                &
                \includegraphics[width=7cm]{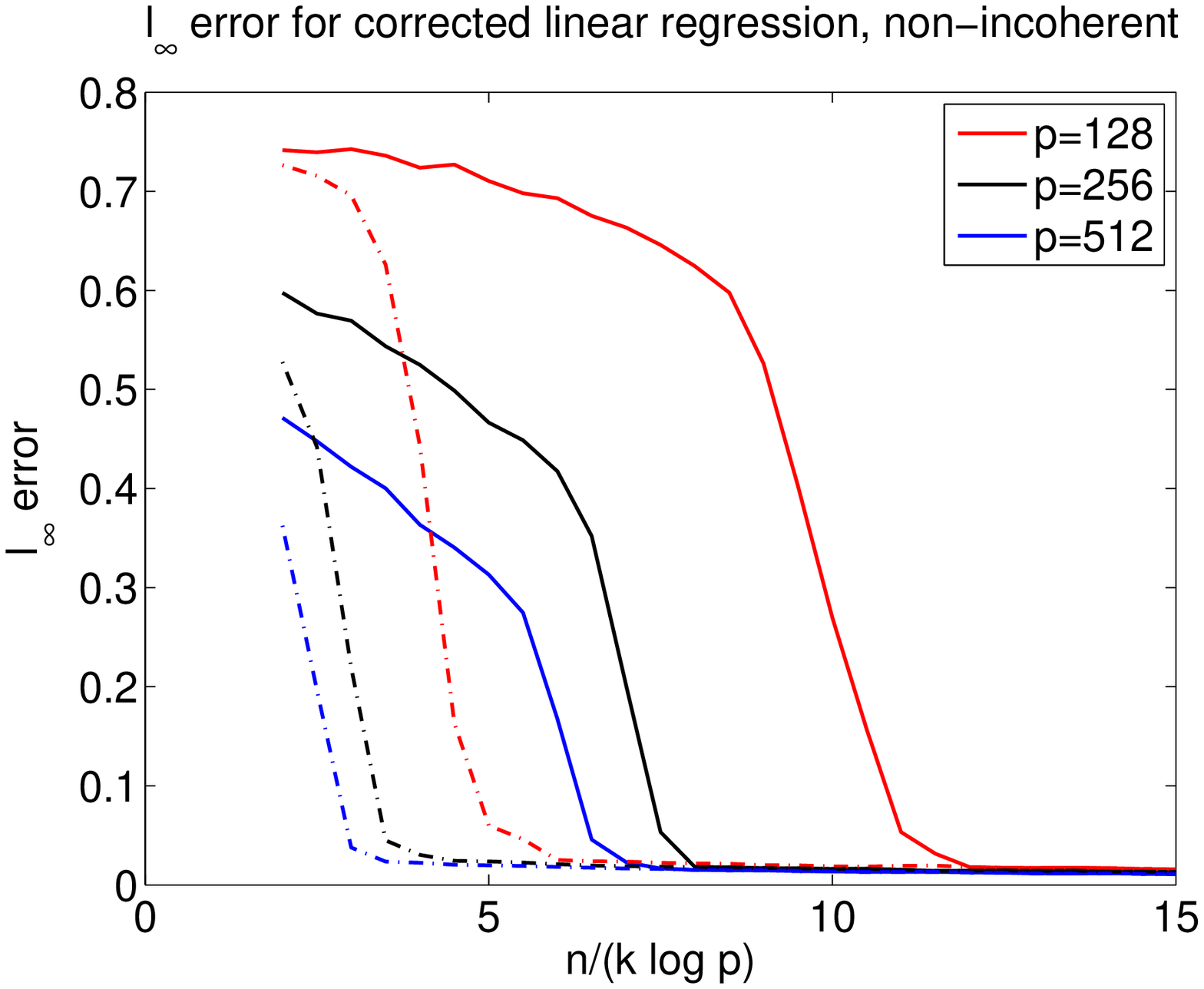}
                \\
		(a) & (b)
	\end{tabular}
	\begin{center}
	\begin{tabular}{c}
		\includegraphics[width=7cm]{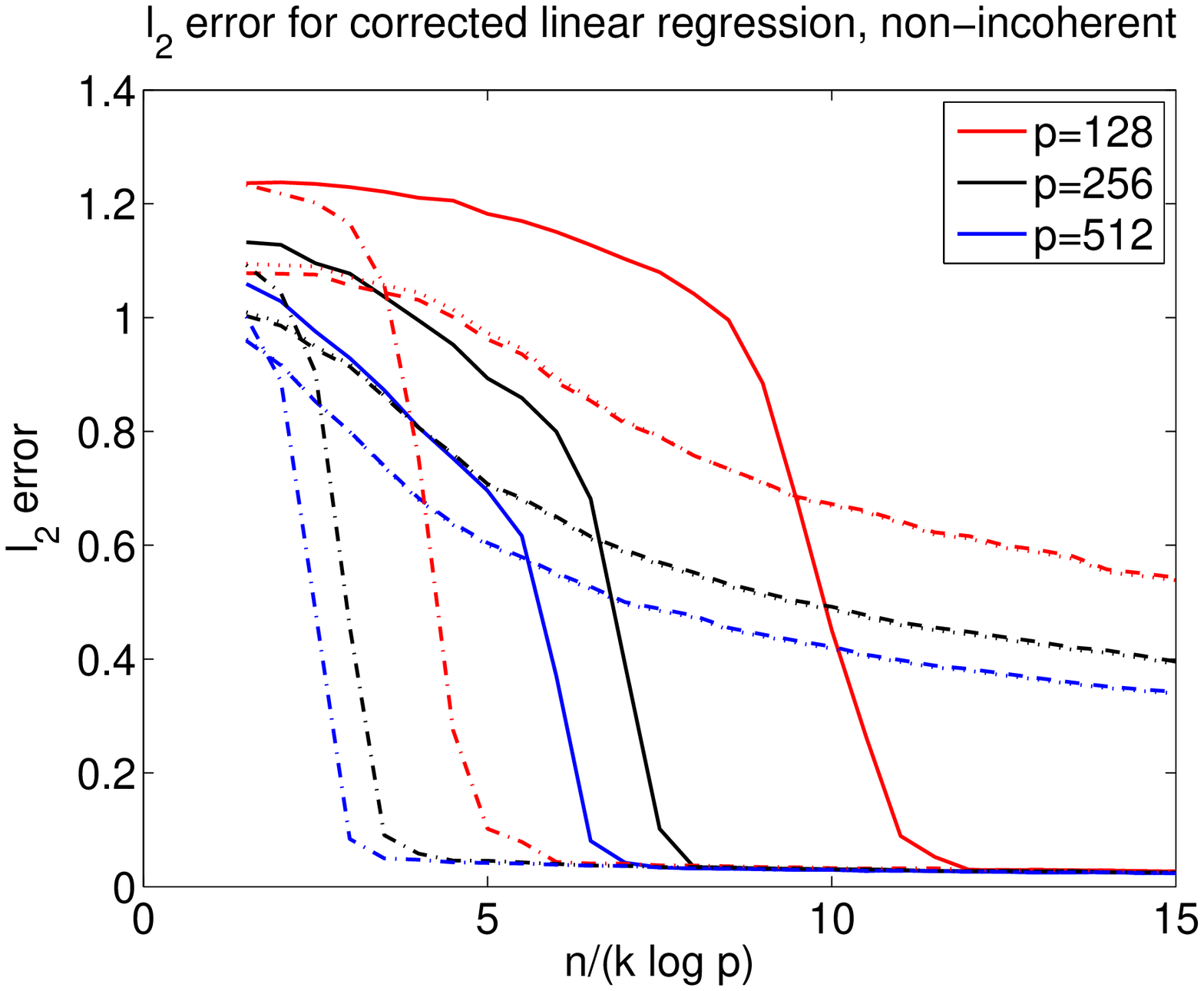} \\
		% (c)
	\end{tabular}
	\end{center}
\caption{Plots showing simulation results for least squares linear
  regression with covariates corrupted by additive noise, for three
  problem sizes: $p = 128$ (red), $p = 256$ (black), and $p = 512$
  (blue). (a) Plot showing variable selection consistency with the
  SCAD (solid) and MCP (dash-dotted) regularizers. The probability of
  success in recovering the correct signed support transitions sharply
  from 0 to 1 as a function of the sample size, agreeing with the
  theoretical predictions of Theorem~\ref{ThmSuppRecovery}. (b) Plot
  showing $\ell_\infty$-error $\|\betahat - \betastar\|_\infty$ with
  the SCAD (solid) and MCP (dash-dotted) regularizers. As predicted by
  Theorem~\ref{ThmEllInf}, both regularizers demonstrate consistency
  in $\ell_\infty$-error, even though the design matrix is \emph{not}
  incoherent. (c) Plot showing $\ell_2$-error $\|\betahat -
  \betastar\|_2$ with the $\ell_1$-penalty (dotted), LSP (dashed),
  SCAD (solid), and MCP (dash-dotted) regularizers. All four
  regularizers demonstrate consistency in $\ell_2$-error. Note that
  the two sets of lines for the $\ell_1$-penalty and LSP nearly align
  for this choice of regularization parameters.}
\label{FigOLSCorr}
\end{figure}
\end{center}

In our second set of simulations, we explore the uniqueness of
stationary points of the composite objectives. We focus on
settings where the loss function comes from either linear regression
with ordinary least squares, or logistic regression. Our theory
guarantees that stationary points are unique when $\mu < 2\alpha_1$,
but when $2\alpha_1 \le \mu$, multiple stationary points may emerge. In
fact, when $2\alpha_1 \le \mu$, convergence of the composite gradient
descent algorithm and consistent support recovery are no longer
guaranteed. In practice, we observe that multiple initializations of
the composite gradient descent algorithm still appear to converge to a
single stationary point with the correct support, when $\mu$ is
slightly larger than $2\alpha_1$; however, when that condition is violated
more severely, the composite gradient descent algorithm indeed
terminates at several distinct stationary
points. Figure~\ref{FigOLSUnique} shows the result of multiple runs of
the composite gradient descent algorithm with different regularizers
in the cleanly-observed linear regression setting. We generated
observations $x_i \sim N(0, \Sigma_x)$, with $\Sigma_x = M_2(\theta)$
coming from the family of spiked identity models~\eqref{EqnSpiked}, for $\theta = 0.7$ and $0.8$, and independent additive noise, $\epsilon_i \sim
N(0, (0.1)^2)$. We set the problem dimensions to be $p = 128$, $k
\approx \sqrt{p}$, and $n \approx 20 k \log p$, and generated
$\betastar$ to have $k$ nonzero values $\pm \frac{1}{\sqrt{k}}$ with
equal probability for each sign. When using the SCAD or MCP
regularizers (panels (b) and (d)), distinct stationary points emerge
and the recovered support is incorrect, since $2 \alpha_1
< \mu$. In contrast, the $\ell_1$-penalty and LSP still continue to
produce unique stationary points with the correct support (panels (a)
and (b)). Observe from the plots in Figure~\ref{FigOLSUnique} that the
error $\|\beta^t - \betastar\|_2$ decreases at a rate that is linear
on a log scale, as predicted by Theorem 3 of Loh and
Wainwright~\cite{LohWai13}, until it reaches the threshold of
statistical accuracy. Further note the significant increase in overall
precision from using SCAD or MCP, as seen by comparing the vertical
axes in panels (a) \& (b) and panels (c) \& (d).

\begin{center}
\begin{figure}
\begin{tabular}{cc}
	\includegraphics[width=7cm]{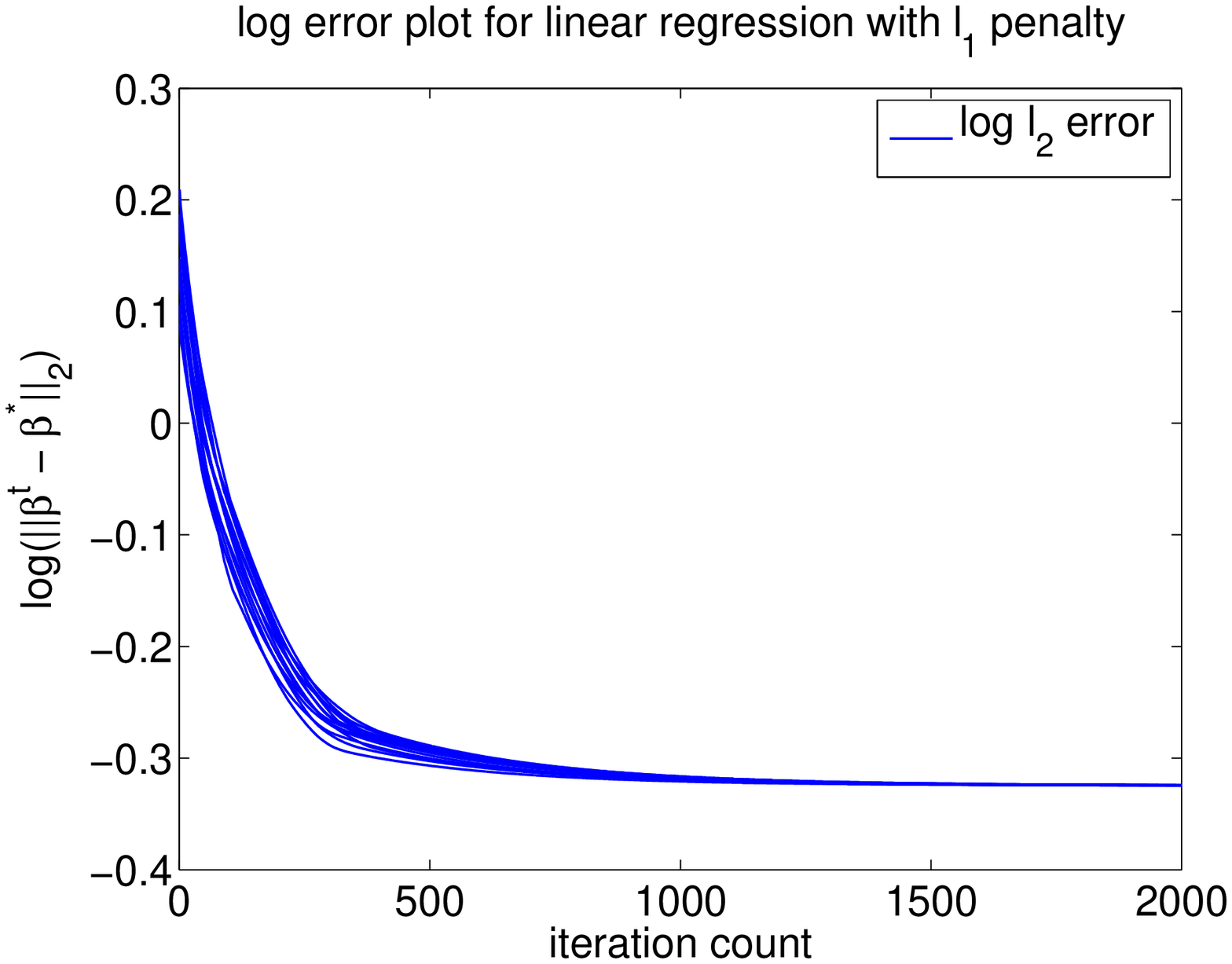} &
        \includegraphics[width=7cm]{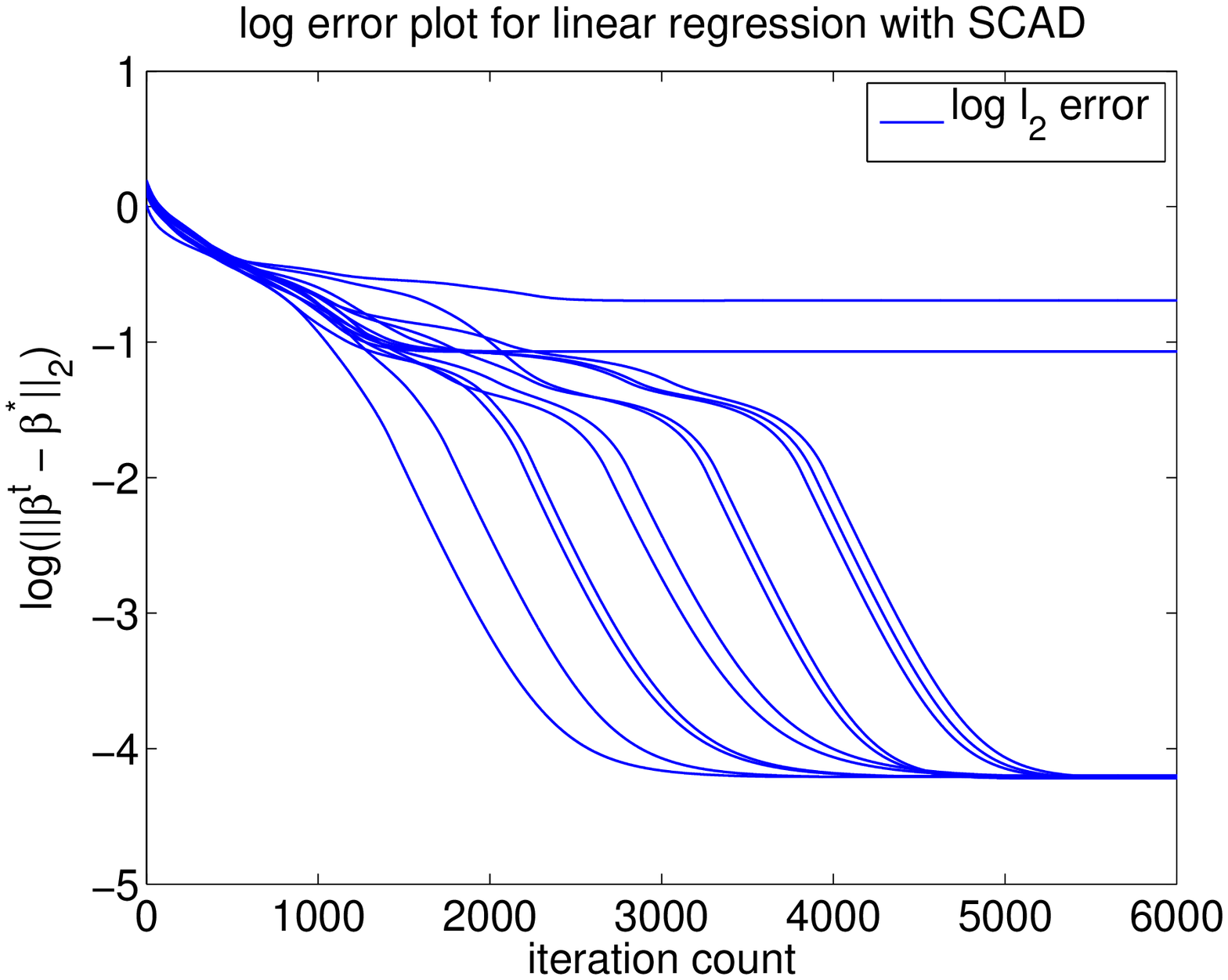} \\ (a) & (b) \\
\includegraphics[width=7cm]{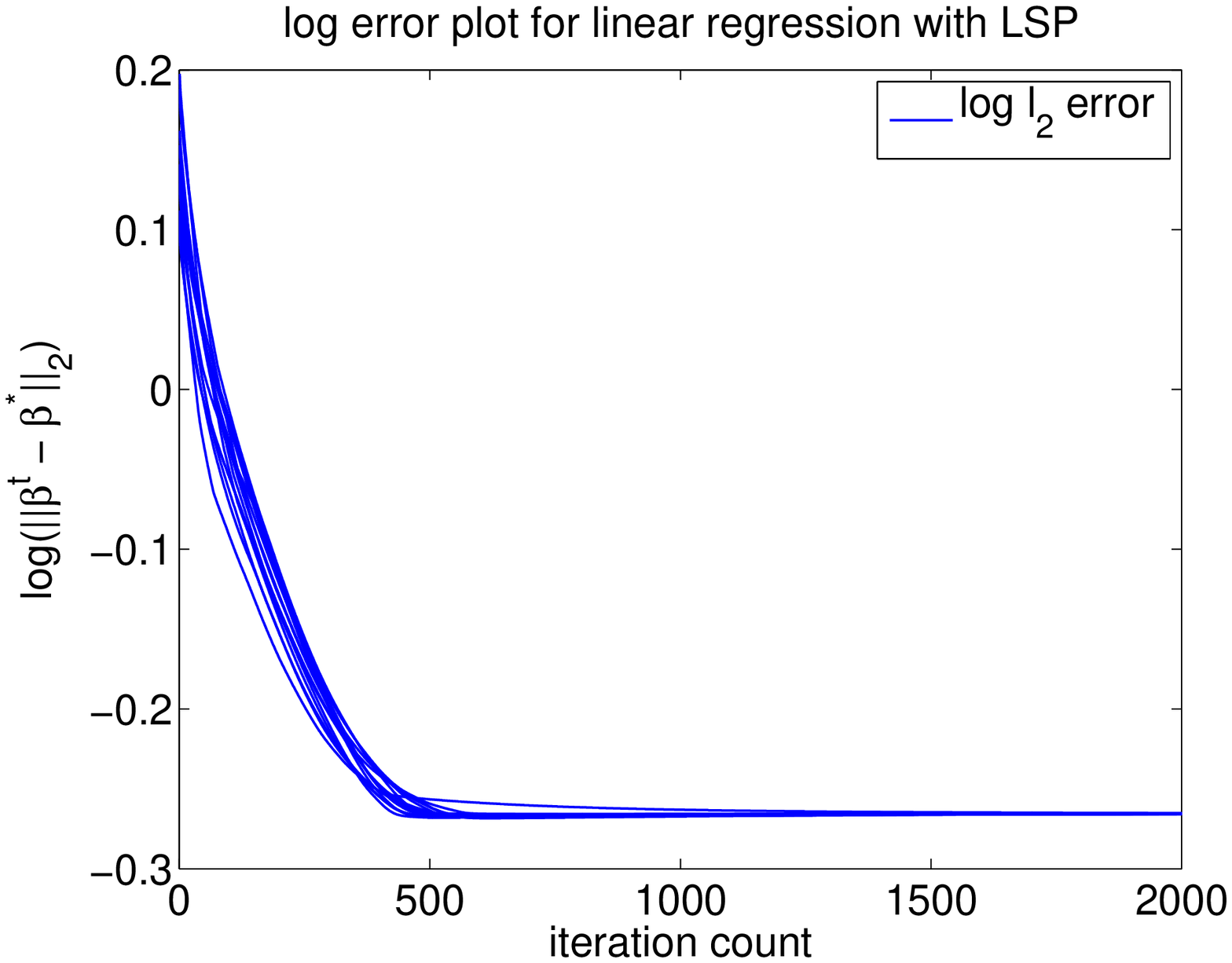} &
\includegraphics[width=7cm]{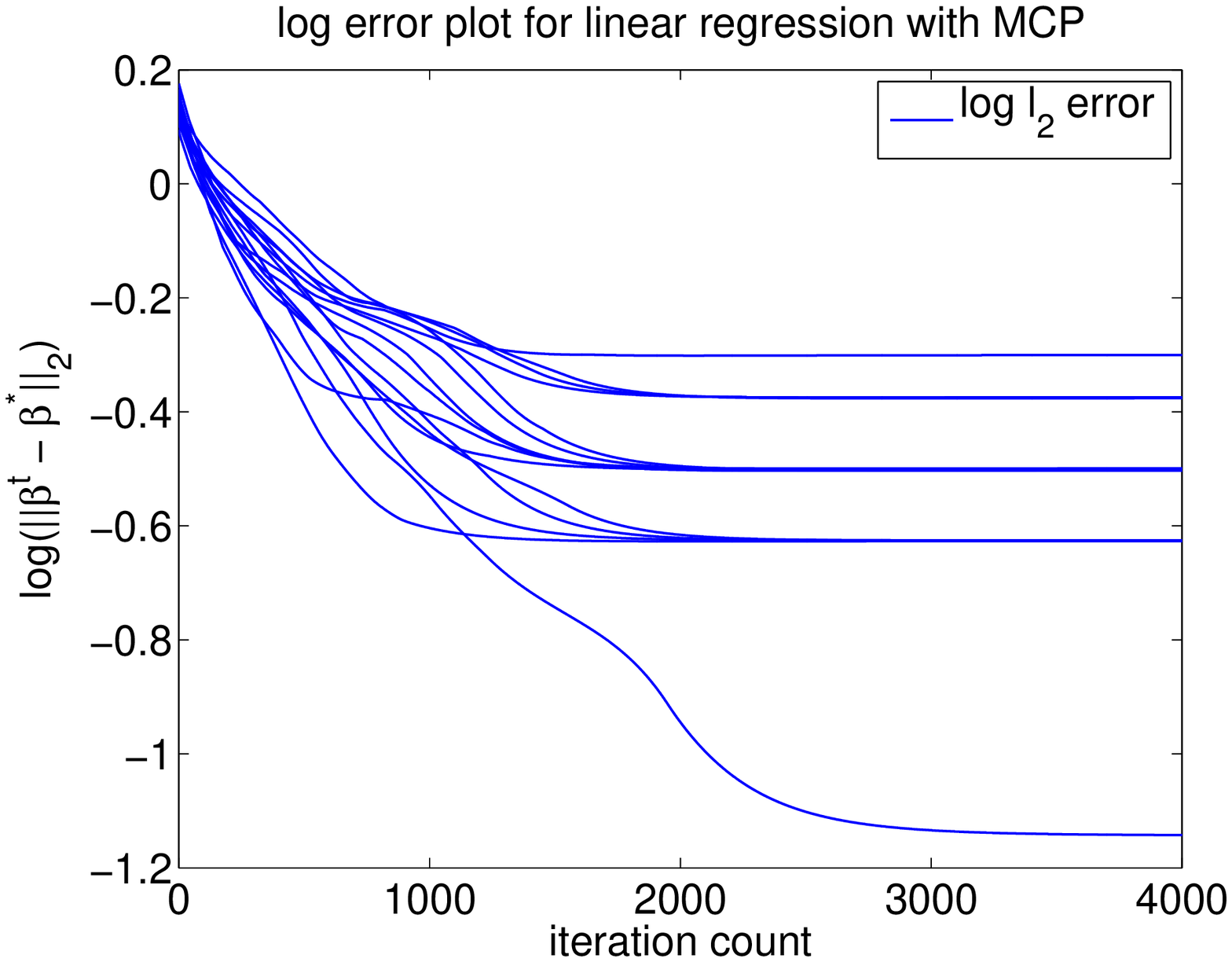} \\ (c) & (d)
\end{tabular}
\caption{Plots showing log $\ell_2$-error $\log\left(\|\beta^t -
  \betastar\|_2\right)$ as a function of iteration number $t$ for OLS
  linear regression with a variety of regularizers and 15 random
  initializations of composite gradient descent. The covariates are
  normally distributed with covariance matrix coming from a spiked
  identity model with parameter $\theta$. (a) $\ell_1$-penalty with
  $\theta = 0.7$. (b) SCAD with $\theta = 0.7$. (c) LSP with $\theta =
  0.8$. (d) MCP with $\theta = 0.8$. The SCAD and MCP regularizers
  clearly give rise to multiple distinct stationary points, agreeing
  with our predictions.}
\label{FigOLSUnique}
\end{figure}
\end{center}

Finally, we present a third set of simulations, analogous to the
second, with the OLS loss function replaced by the maximum likelihood loss
function for logistic regression:
\begin{equation*}
	\Loss_\numobs(\beta) = \frac{1}{n} \sum_{i=1}^n
        \left\{\log(1+\exp(x_i^T \beta)) - y_i x_i^T \beta\right\}.
\end{equation*}
We generated $x_i \sim N(0, \sigma_x^2 I)$, with $\sigma_x \in \{1,
3\}$, and set the problem dimensions to be $p = 128$, $k \approx
\sqrt{p}$, and $n \approx 10 k^3 \log p$. We generated $\betastar$ to
have $k$ nonzero values $\pm \frac{1}{\sqrt{k}}$ with equal
probability for each sign, and generated response variables $y_i \in
\{0,1\}$ according to
\begin{equation*}
\mprob(y_i = 1 \mid x_i, \betastar) = \frac{\exp(x_i^T \betastar)}{1 +
  \exp(x_i^T \betastar)}.
\end{equation*}
Figure~\ref{FigLogisticUnique} shows the results of our
simulations. Panels (a)--(d) plot the log $\ell_2$-error as a function
of iteration number, when $\sigma_x = 1$. Note that in this case, an
empirical evaluation shows that $\lambda_{\min}(\nabla^2
\Loss(\betastar)) \approx 0.14$, so we expect $\alpha_1 \approx 0.14$
and $\mu \nless 2\alpha_1$. As in the plots of
Figure~\ref{FigOLSUnique}, multiple stationary points emerge in panels
(c) and (d) when $\rho_\lambda$ is the SCAD or MCP regularizer; in contrast, we see from panels (a) and (b) that all 15 runs of composite
gradient descent converge to the same stationary point when
$\rho_\lambda$ is the $\ell_1$-penalty or LSP. In panels (e) and (f),
we repeat the simulations with $\sigma_x = 3$. In this case,
$\lambda_{\min}(\nabla^2 \Loss(\betastar)) \approx 0.25$, and we see
from our plots that although the condition $\mu < 2\alpha_1$ is still
violated, the larger value of $\alpha_1$ is enough to make the
stationary points under SCAD or MCP regularization unique. We may
again observe the geometric rate of convergence of the $\ell_2$-error
$\|\beta^t - \betastar\|_2$ in each plot, up to a certain small
threshold. The improved performance from using the SCAD and MCP
regularizers may be observed empirically by comparing the vertical
axes in the panels of Figure~\ref{FigLogisticUnique}.

\begin{center}
\begin{figure}
\begin{tabular}{cc}
\includegraphics[width=7cm]{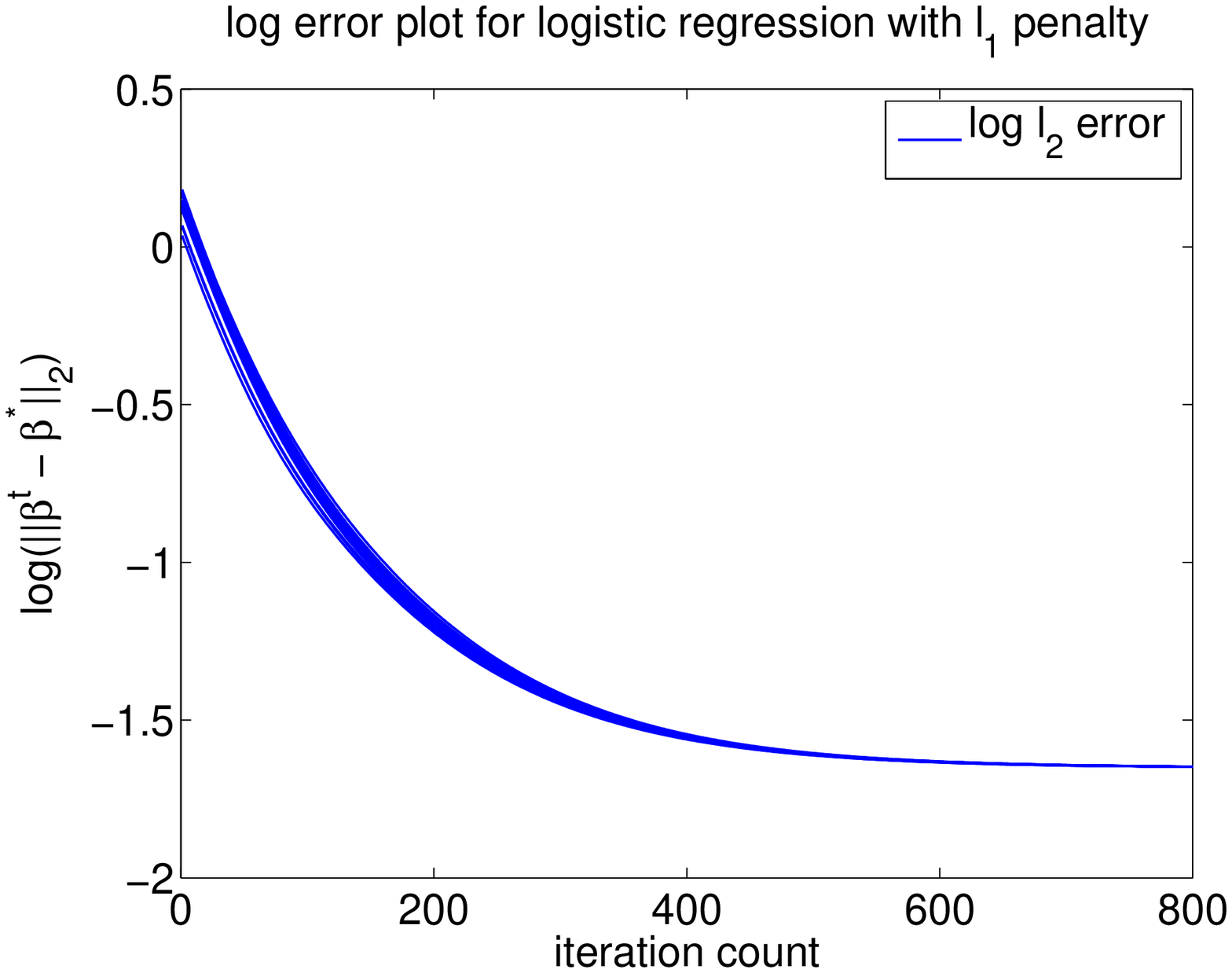} &
\includegraphics[width=7cm]{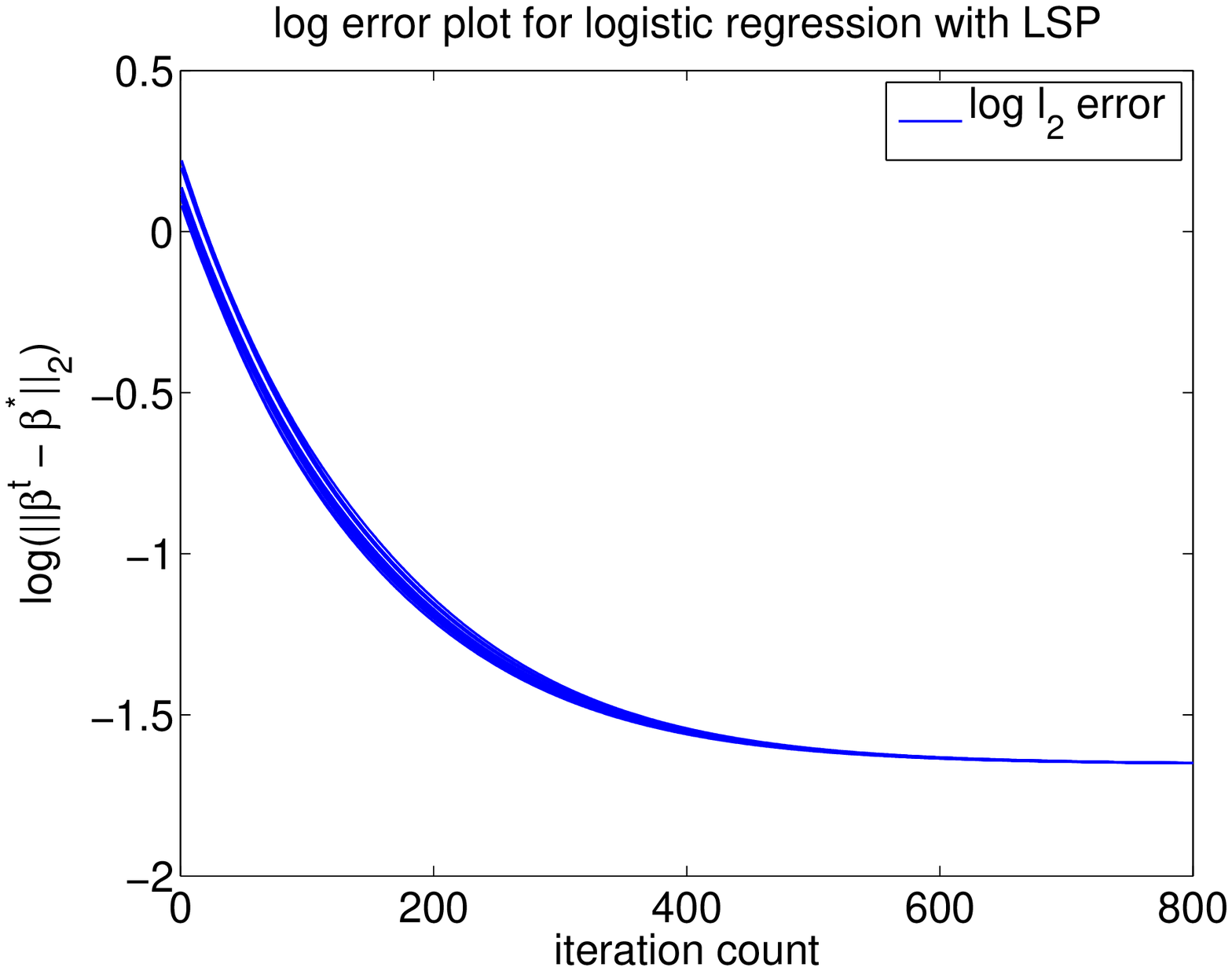} \\
	(a) & (b) \\
	\includegraphics[width=7cm]{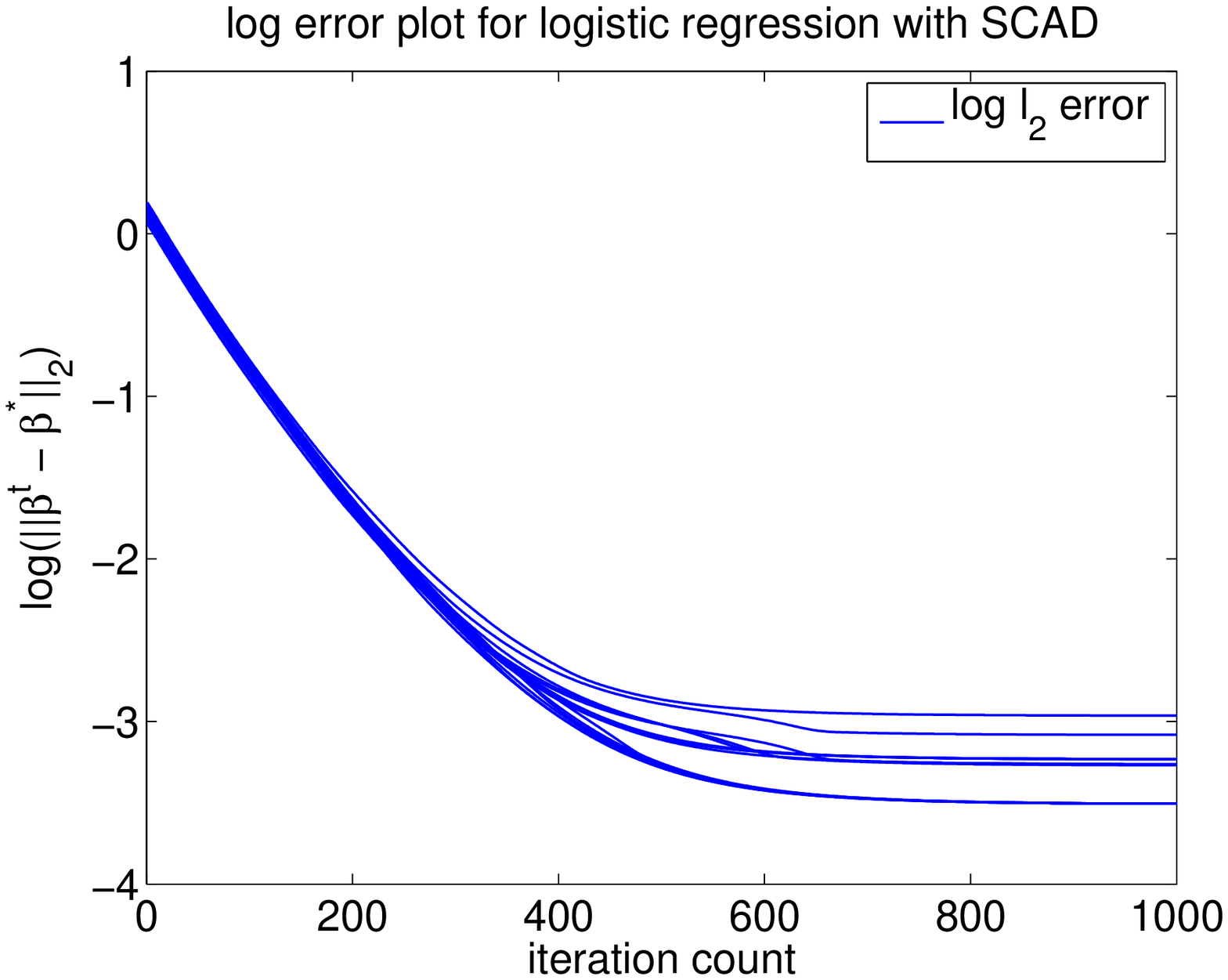} &
        \includegraphics[width=7cm]{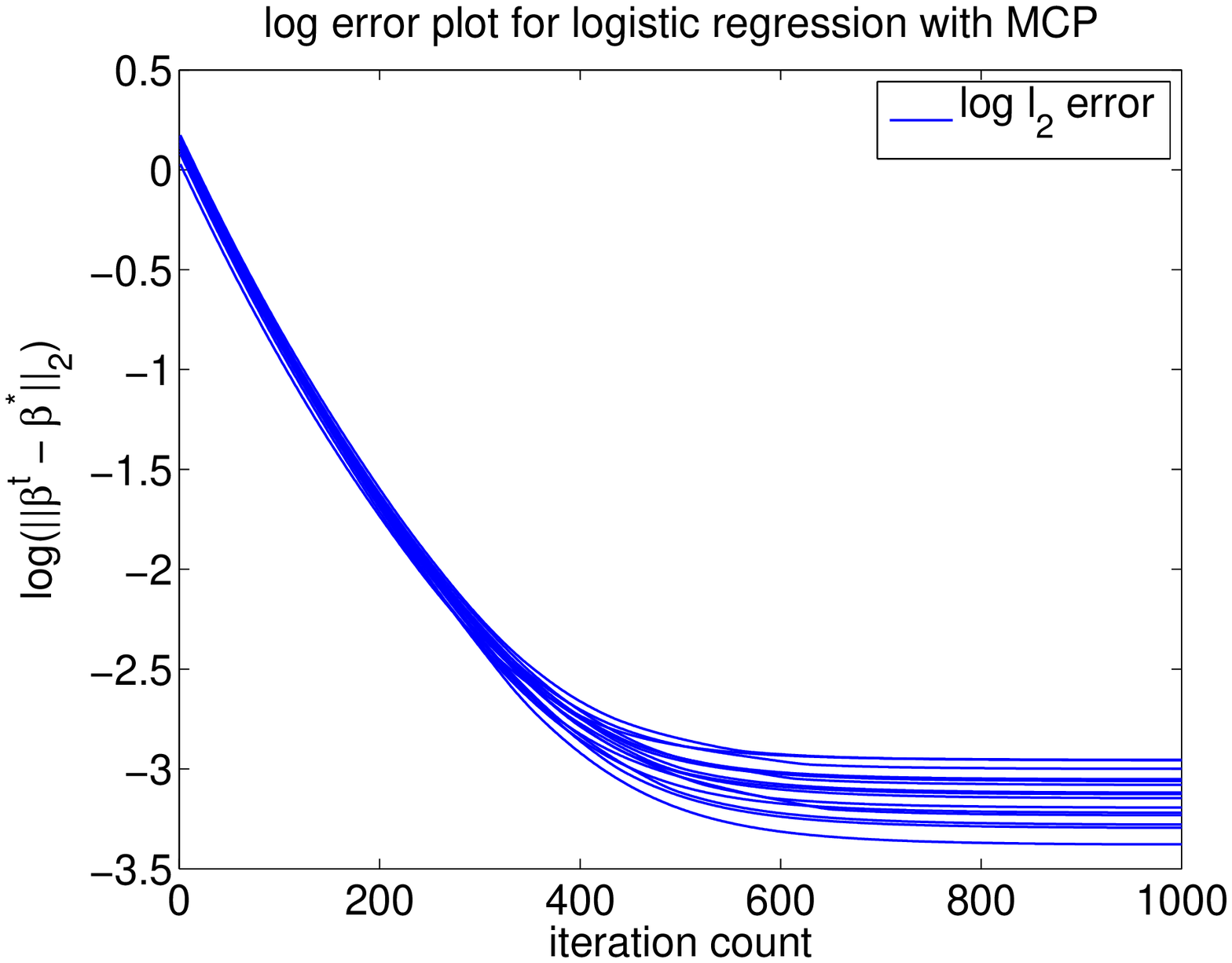} \\
	(c) & (d) \\
	\includegraphics[width=7cm]{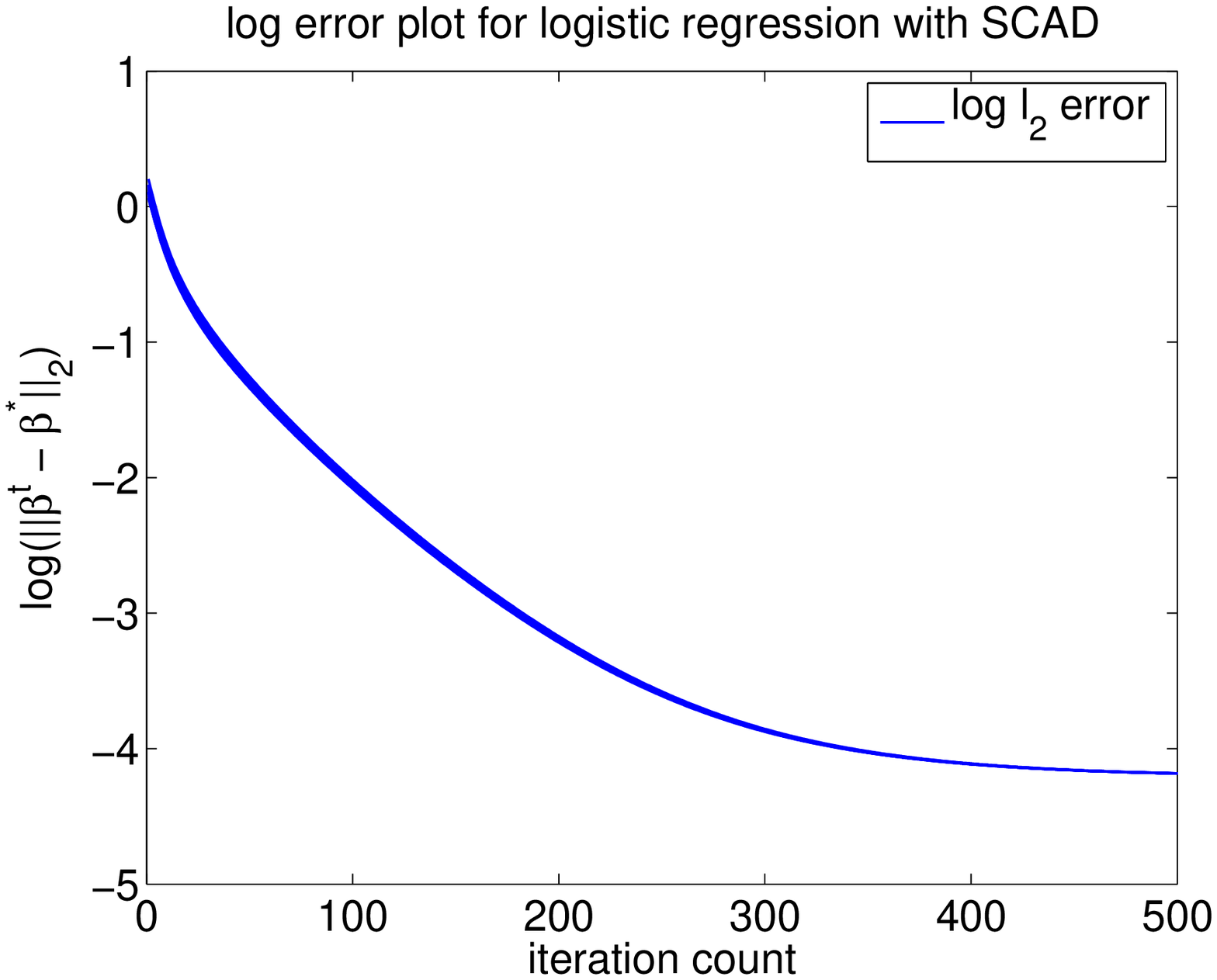} & \includegraphics[width=7cm]{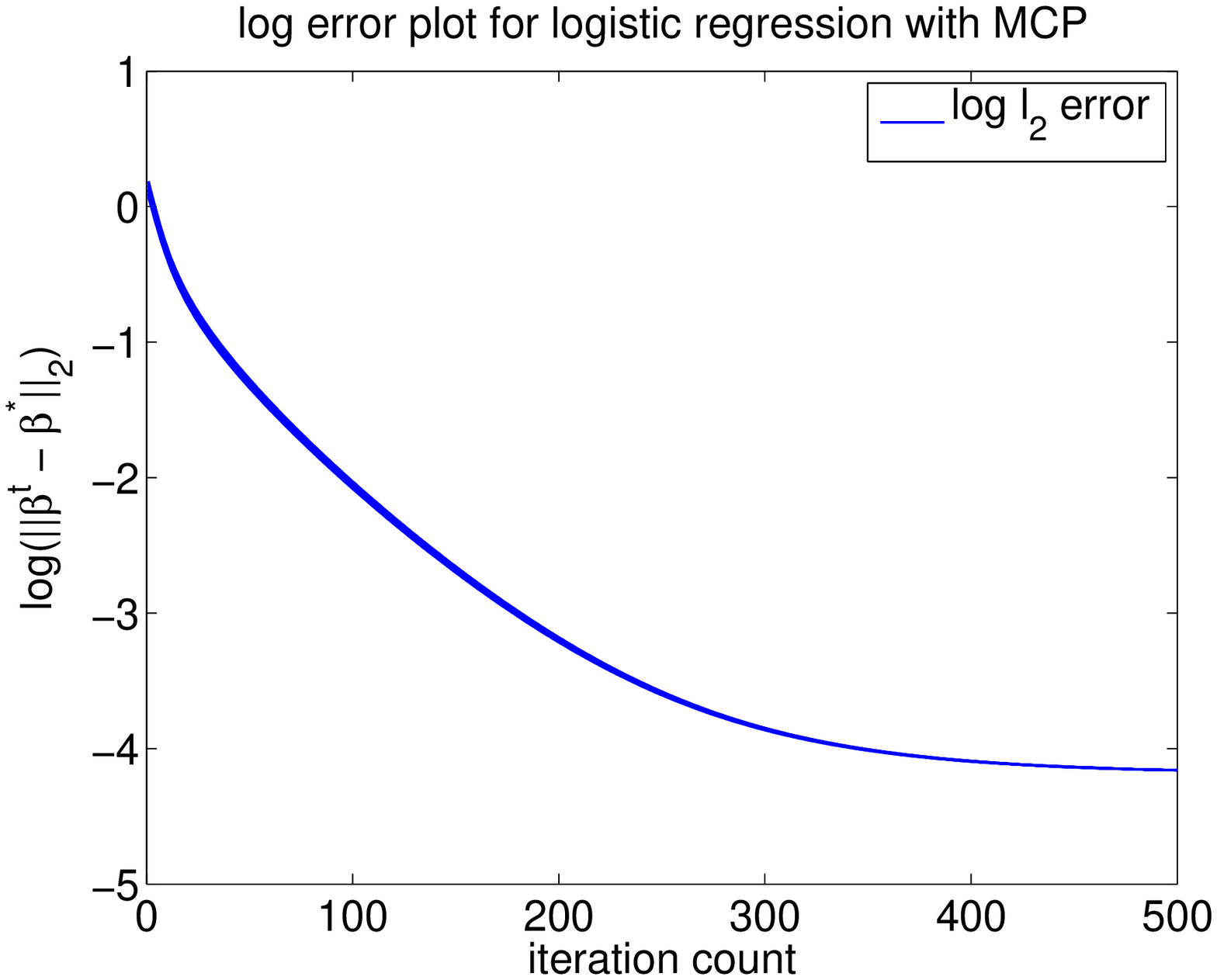} \\
	(e) & (f)
\end{tabular}
\caption{Plots showing log $\ell_2$-error $\log\left(\|\beta^t -
  \betastar\|_2\right)$ as a function of iteration number $t$ for
  logistic regression, with a variety of regularizers and 15 random
  initializations of composite gradient descent. The covariates are
  normally distributed according to $x_i \sim N(0, \sigma_x^2 I)$,
  with $\sigma_x = 1$ in plots (a)--(d), and $\sigma_x = 3$ in plots
  (e)--(f). In panels (c) and (d), the composite gradient descent
  algorithm settles into multiple distinct stationary points, which
  exist because $2\alpha_1 < \mu$ for the SCAD and MCP. However, when
  the covariates have a larger covariance, the SCAD and MCP
  regularizers produce unique stationary points, as observed in panels
  (e) and (f).}
\label{FigLogisticUnique}
\end{figure}
\end{center}

%%%%%%%%%%%%%%%%%%%%%%%%%%%%%%%%%%%%%%%%%%%%%%%%%%%%%%%%%%%%%%%%%%%%%%%%%%

\section{Discussion}

We have developed an extended framework for analyzing a variety of
nonconvex problems via the primal-dual witness proof technique. Our
results apply to composite optimization programs where both the loss
and regularizer function are allowed to be nonconvex, and our analysis
significantly generalizes the machinery previously established to
study convex objective functions. As a consequence, we have provided a
powerful reason for using nonconvex regularizers such as the SCAD and
MCP rather than the convex $\ell_1$-penalty: In addition to being
consistent in $\ell_2$-error, the nonconvex regularizers actually
produce an overall estimator that is consistent for support recovery
when the design matrix is non-incoherent and the usual
$\ell_1$-regularized program fails in recovering the correct
support. We have also established a similar strong result for the graphical
Lasso objective function with nonconvex regularizers, which eliminates
the need for complicated incoherence conditions on the inverse
covariance matrix.

Future directions of research include devising theoretical guarantees
when the condition $\mu < 2\alpha_1$ is only mildly violated, since
the condition does not appear to be strictly necessary based on our
simulations, and establishing a rigorous justification for why the
SCAD and MCP regularizers perform appreciably better than the
$\ell_1$-penalty even in terms of $\ell_2$-error, in situations where
the assumptions are not strong enough for an oracle result to
apply. It is also an open question as to how generally the restricted
strong convexity condition may hold for various other nonconvex loss
functions of interest, or whether a local RSC condition is sufficient
to guarantee good behavior with proper initializations of a gradient descent algorithm. Finally, it
would be useful to be able to compute the RSC constants $(\alpha_1,
\alpha_2)$ empirically from data, so as to assign a nonconvex
regularizer with the proper amount of curvature.

%%%%%%%%%%%%%%%%%%%%%

\section*{Acknowledgments}
The work of PL was partly supported from a Hertz Foundation
Fellowship, an NSF Graduate Research Fellowship and a PD Fellowship
while studying at Berkeley. MJW and PL were also partially supported
by grants NSF grant DMS-1107000, NSF grant CIF-31712-23800, and Air
Force Office of Scientific Research Grant AFOSR-FA9550-14-1-0016.

%%%%%%%%%%%%%%%%%%%%%

\appendix

%%%%%%%%%%%%%%%%%%%%%

%%%%%%%%%%%%%%%%%%%%%%%%%%%%%%%%%%%%%%%%%%%%%%%%%%%%%%%%%%%%%%%%%%%%%%%%%%

\section{Proof of Theorem~\ref{ThmSuppRecovery}}
\label{SecSuppRecovery}

In this Appendix, we provide the proof of
Theorem~\ref{ThmSuppRecovery}.  We begin with the main body of the
argument, with the proofs of some more technical lemmas deferred to
later subsections.

\subsection{Main part of proof}

We follow the outline of the primal-dual witness construction
described in Section~\ref{SecPDW}.  For step (i) of the construction,
we use Lemma~\ref{LemEll2Bounds} in Appendix~\ref{AppSupporting},
where we simply replace $p$ by $k$ and $\Loss_\numobs$ by $\LossSn$,
which is the function $\Loss_\numobs$ restricted to $\real^S$. It
follows that as long as $n \ge \frac{16 R^2 \max(\tau_1^2,
  \tau_2^2)}{\alpha_2^2} \log k$, we are guaranteed that $\|\betahat_S
- \betastar_S\|_1 \le \frac{28\lambda k}{2\alpha_1 - \mu}$, whence
\begin{equation*}
\|\betahat_S\|_1 \le \|\betastar\|_1 + \|\betahat_S - \betastar_S\|_1
\leq \frac{R}{2} + \frac{28\lambda k}{2 \alpha_1 - \mu} < R.
\end{equation*}
Here, the final inequality follows by the lower bound in
inequality~\eqref{EqnRCond}. We conclude that $\betahat_S$ must be in
the interior of the feasible region.

Moving to step (ii) of the PDW construction, we define the shifted
objective function $\Lossbar_n(\beta) \defn \Loss_\numobs(\beta) -
q_\lambda(\beta)$.  Since $\betahat_S$ is an interior point, it must
be a zero-subgradient point for the restricted
program~\eqref{EqnRestrictedM}, so $\nabla \LossSnbar (\betahat_S) +
\lambda \zhat_S = 0$, where $\zhat_S \in \partial \|\betahat_S\|_1$ is
the dual vector.  By the chain rule, this implies that $\left(\nabla
\Lossbar_n(\betahat)\right)_S + \lambda \zhat_S = 0$, where $\betahat
\defn (\betahat_S, 0_{S^c})$.  Accordingly, we may define the
subvector $\zhat_{S^c} \in \real^{S^c}$ such that
\begin{equation}
\label{EqnZero}
\nabla \Lossbar_n(\betahat) + \lambda \zhat = 0,
\end{equation}
where $\zhat \defn (\zhat_S, \zhat_{S^c})$ is the extended
subgradient.  Under the assumption~\eqref{EqnStrictDual}, this
completes step (ii) of the construction.

For step (iii), we first establish that $\betahat$ is a local minimum
for the program~\eqref{EqnMEst} by verifying the sufficient conditions
of Lemma~\ref{LemSecondOrder} in Appendix~\ref{AppSupporting}, with $f
= \Lossbar_n$, $g = q_\lambda$, and $(x^*, v^*, w^*, \mu^*) =
(\betahat, \zhat, \zhat, 0)$. Note that Lemma~\ref{LemSmiley}(b) from
Appendix~\ref{AppAmenable} ensures the concavity and differentiability
of $g(x) - \frac{\mu}{2} \|x\|_2^2$. Since $\mu^* = 0$,
condition~\eqref{EqnCompSlack} is trivially satisfied. Furthermore,
condition~\eqref{EqnZeroGrad} holds by
equation~\eqref{EqnZero}. Hence, it remains to verify the
condition~\eqref{EqnStrictConvex}.

We first show that $G^* \subseteq \real^S$. Supposing the contrary,
consider a vector $\specvec \in G^*$ such that $\supp(\specvec)
\subsetneq S$. Fixing some index $j \in S^c$ such that $\specvec_j
\neq 0$, by the definition of $G^*$, we have
\begin{equation}
\label{EqnGDefn}
\sup_{v \in \partial \|\betahat\|_1} \specvec^T (\nabla
\Lossbar_n(\betahat) + \lambda v) = 0.
\end{equation}
However, if $\ztil$ denotes the vector $\zhat$ with entry $j$
replaced by $\sign(s_j) \in \{-1, 1\}$, we clearly still have $\ztil
\in \partial \|\betahat\|_1$, but
\begin{equation*}
\specvec^T(\nabla \Lossbar_n(\betahat) + \lambda \ztil) > \specvec^T
(\nabla \Lossbar_n(\betahat) + \lambda \zhat) = 0,
\end{equation*}
where the strict inequality holds because $\|\zhat_{S^c}\|_\infty < 1$,
by our assumption. We have thus obtained a contradiction to
equation~\eqref{EqnGDefn}; consequently, our initial assumption was
false, and we may conclude that $G^* \subseteq \real^S$.  \\

 The following lemma, proved Appendix~\ref{AppLemSecOrder}, guarantees
 that a shifted form of the loss function is strictly convex over an
 $|S|$-dimensional subspace:
\begin{lem*}
\label{LemSecOrder}
Consider any $(\alpha,\tau)$-RSC loss function $\Loss_\numobs$ and
$\mu$-amenable regularizer $\rho_\lambda$, with $\mu < \alpha_1$. If
$\numobs \ge \frac{2\tau_1}{\alpha_1 - \mu} k \log p$, the function
\mbox{$\Loss_\numobs(\beta) - \frac{\mu}{2} \|\beta\|_2^2$ } is
strictly convex on $\beta \in \real^S$, and the restricted
program~\eqref{EqnRestrictedM} is also strictly convex.
\end{lem*}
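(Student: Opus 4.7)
The plan is to reduce strict convexity of the restricted program~\eqref{EqnRestrictedM} to strict convexity of $\Loss_\numobs(\beta) - \tfrac{\mu}{2}\|\beta\|_2^2$ on $\real^S$. Writing the restricted objective as
\begin{equation*}
\Loss_\numobs(\beta_S, 0_{S^c}) + \rho_\lambda(\beta_S) = \left[\Loss_\numobs(\beta_S, 0_{S^c}) - \tfrac{\mu}{2}\|\beta_S\|_2^2\right] + \left[\rho_\lambda(\beta_S) + \tfrac{\mu}{2}\|\beta_S\|_2^2\right],
\end{equation*}
the second bracket is convex by property (v) of $\mu$-amenability, applied coordinatewise. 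Granting the first claim, the restricted objective is the sum of a strictly convex and a convex function, hence strictly convex.

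For the first claim, I would first establish local strict monotonicity of the gradient. For any $\beta, \Delta \in \real^S$ with $0 < \|\Delta\|_2 \le 1$, the sparsity bound $\|\Delta\|_1^2 \le k\|\Delta\|_2^2$ combined with the local RSC inequality~\eqref{EqnLocalRSC} yields
\begin{equation*}
\inprod{\nabla\Loss_\numobs(\beta+\Delta) - \nabla\Loss_\numobs(\beta)}{\Delta} \ge \Bigl(\alpha_1 - \tau_1 \tfrac{k\log p}{n}\Bigr)\|\Delta\|_2^2.
\end{equation*}
Plugging in the sample size hypothesis $n \ge \frac{2\tau_1}{\alpha_1-\mu}k\log p$ forces the right-hand side to be at least $\tfrac{\alpha_1+\mu}{2}\|\Delta\|_2^2$, and subtracting $\mu\|\Delta\|_2^2$ to account for the $-\tfrac{\mu}{2}\|\cdot\|_2^2$ shift produces
\begin{equation*}
\inprod{\nabla\bigl(\Loss_\numobs - \tfrac{\mu}{2}\|\cdot\|_2^2\bigr)(\beta+\Delta) - \nabla\bigl(\Loss_\numobs - \tfrac{\mu}{2}\|\cdot\|_2^2\bigr)(\beta)}{\Delta} \ge \tfrac{\alpha_1-\mu}{2}\|\Delta\|_2^2 > 0.
\end{equation*}

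To promote this to global strict convexity on $\real^S$, I would argue along line segments. Given any $\beta_1 \neq \beta_2$ in $\real^S$, set $\gamma(t) = \beta_2 + t(\beta_1-\beta_2)$ and $h(t) \defn (\Loss_\numobs - \tfrac{\mu}{2}\|\cdot\|_2^2)(\gamma(t))$, and partition $[0,1]$ into equal subintervals of length at most $1/\|\beta_1-\beta_2\|_2$. On each subinterval the displacement has $\ell_2$-norm at most $1$ and lies in $\real^S$, so the local bound above gives $h'(t_{i+1}) > h'(t_i)$. Chaining the increments, $h'$ is strictly increasing on $[0,1]$, so $h$ is strictly convex on $[0,1]$; this translates into strict convexity of $\Loss_\numobs - \tfrac{\mu}{2}\|\cdot\|_2^2$ along the segment, and hence on all of $\real^S$.

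The main technical subtlety is that the large-displacement RSC bound~\eqref{EqnL2Bd} grows only linearly in $\|\Delta\|_2$, so it cannot absorb the correction $\mu\|\Delta\|_2^2$ when $\|\Delta\|_2$ is large; strict convexity of the shifted loss is not directly readable from~\eqref{EqnL2Bd}. The chaining step bypasses this by relying only on the small-displacement bound~\eqref{EqnLocalRSC}, which is precisely where the stated sample size condition enters.
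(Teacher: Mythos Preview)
Your proof is correct and shares the same overall architecture as the paper's: both reduce the problem to showing that $\Loss_\numobs - \tfrac{\mu}{2}\|\cdot\|_2^2$ is strictly convex on $\real^S$ via the local RSC bound~\eqref{EqnLocalRSC} together with $\|\Delta\|_1^2 \le k\|\Delta\|_2^2$ for $\Delta \in \real^S$, and both finish by splitting the restricted objective into a strictly convex piece plus a convex piece coming from $\mu$-amenability.

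The difference lies in how the local bound is promoted to global strict convexity. The paper takes the infinitesimal route: it computes $v^T\nabla^2\Loss_\numobs(\beta)v$ as the limit $\lim_{t\to 0}\tfrac{1}{t^2}\inprod{\nabla\Loss_\numobs(\beta+tv)-\nabla\Loss_\numobs(\beta)}{tv}$, reads off the Hessian lower bound $(\nabla^2\Loss_\numobs(\beta))_{SS}\succeq(\alpha_1-\tau_1\tfrac{k\log p}{n})I$ from~\eqref{EqnLocalRSC}, and concludes global strict convexity from positive definiteness of the Hessian at every point. You instead use the full unit-scale version of~\eqref{EqnLocalRSC} to get strict gradient monotonicity on displacements with $\|\Delta\|_2\le 1$, and then chain along segments to reach arbitrary pairs. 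The paper's argument is shorter but tacitly relies on $\Loss_\numobs$ being twice differentiable (which is assumed elsewhere in the paper, e.g.\ in the definition of $\Qhat$); your first-order chaining argument avoids that assumption, which is a mild advantage in generality. Your observation that~\eqref{EqnL2Bd} cannot directly absorb the $-\mu\|\Delta\|_2^2$ correction for large $\|\Delta\|_2$ correctly identifies why some device---either the Hessian limit or your chaining---is needed.
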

\noindent In particular, since $G^* \subseteq S$ and $\supp(\betahat)
\subseteq S$, Lemma~\ref{LemSecOrder} immediately implies
condition~\eqref{EqnStrictConvex} of Lemma~\ref{LemSecondOrder}. We
conclude that $\betahat$ is indeed a local minimum of the
program~\eqref{EqnMEst}. \\

The following lemma, proved in Appendix~\ref{AppStationBd}, show that
all stationary points of the program~\eqref{EqnMEst} are supported on
$S$:
\begin{lem*}
\label{LemStationBd}
Suppose $\betatil$ is a stationary point of the
program~\eqref{EqnMEst} and the conditions of Theorem~\ref{ThmSuppRecovery} hold. Then $\supp(\betatil)
\subseteq S$.
\end{lem*}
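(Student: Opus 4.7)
The plan is to show $\betatil = \betahat$ (which immediately gives $\supp(\betatil) \subseteq S$) by comparing the subgradient equations satisfied by both vectors, and invoking the RSC of $\Loss_\numobs$, the weak concavity of $q_\lambda$ implied by $\mu$-amenability, and the strict dual-feasibility hypothesis.  The first step is to upgrade the variational-inequality form of stationarity into an unconstrained subgradient equation.  Applying Lemma~\ref{LemEll2Bounds} to $\betatil$ as a stationary point of the full program yields an $\ell_1$-bound of order $\lambda k/(2\alpha_1 - \mu)$; combined with $R \ge 2 \|\betastar\|_1$ and $R \ge 60 \lambda k/(2\alpha_1 - \mu)$ from~\eqref{EqnRCond}, the triangle inequality gives $\|\betatil\|_1 < R$.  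Consequently, there exists $\ztil \in \partial \|\betatil\|_1$ with
\[
\nabla \Loss_\numobs(\betatil) - \nabla q_\lambda(\betatil) + \lambda \ztil = 0,
\]
which is exactly the form needed for comparison against the PDW equation~\eqref{EqnZero} satisfied by $(\betahat,\zhat)$.

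Subtracting the two subgradient equations and taking inner product with $\betatil - \betahat$ produces three terms to control.  For the loss-gradient difference, the RSC condition~\eqref{EqnLocalRSC} yields a lower bound of $\alpha_1 \|\betatil - \betahat\|_2^2 - \tau_1 (\log p/n) \|\betatil - \betahat\|_1^2$.  For the $q_\lambda$-gradient difference, $\mu$-amenability (i.e.\ convexity of $\rho_\lambda + \tfrac{\mu}{2}t^2$) makes $q_\lambda(t) - \tfrac{\mu}{2}t^2$ concave, so
\[
\inprod{\nabla q_\lambda(\betatil) - \nabla q_\lambda(\betahat)}{\betatil - \betahat} \le \mu \|\betatil - \betahat\|_2^2.
\]
For the subgradient cross-term I decompose over $S$ and $S^c$: on $S$, convexity of $\|\cdot\|_1$ contributes a nonnegative term, while on $S^c$ the vanishing of $\betahat_{S^c}$ combined with $\|\zhat_{S^c}\|_\infty \le 1-\delta$ gives $\inprod{\ztil_{S^c} - \zhat_{S^c}}{\betatil_{S^c}} \ge \delta \|\betatil_{S^c}\|_1$.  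Assembling these inequalities leaves
\[
(\alpha_1 - \mu) \|\betatil - \betahat\|_2^2 + \lambda \delta \, \|\betatil_{S^c}\|_1 \le \tau_1 \frac{\log p}{n} \|\betatil - \betahat\|_1^2.
\]

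The final step is to absorb the right-hand tolerance term.  Splitting $\|\betatil - \betahat\|_1 \le \sqrt{k}\|\betatil - \betahat\|_2 + \|\betatil_{S^c}\|_1$ and squaring gives a multiple of $k (\log p/n) \|\betatil - \betahat\|_2^2$ plus a multiple of $\tau_1 (\log p/n) \|\betatil_{S^c}\|_1^2$; applying the crude bound $\|\betatil_{S^c}\|_1 \le R$ turns the latter into a multiple of $R\tau_1 (\log p/n) \|\betatil_{S^c}\|_1$.  The first piece is absorbed by the sample-size condition $n \succsim k \log p/(2\alpha_1-\mu)$ already present in the theorem, and the hypothesis $\delta \ge 4 R \tau_1 \log p/(n\lambda)$ guarantees that the resulting coefficient $\lambda \delta - 2R\tau_1 \log p/n$ of $\|\betatil_{S^c}\|_1$ is strictly positive, forcing $\|\betatil_{S^c}\|_1 = 0$ and $\betatil = \betahat$.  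The main obstacle is the bookkeeping in this final step: making the $\ell_1$-to-$\ell_2$ conversion tight enough that the single RSC tolerance simultaneously loses to the RSC quadratic term and to the dual-feasibility gap.  This is precisely the reason the theorem statement requires both a particular lower bound on $\delta$ and the $k \log p / n$ sample-size scaling, and it is where the constants in~\eqref{EqnRCond} are calibrated to make the argument close.
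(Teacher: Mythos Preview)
Your approach is correct and shares the core ingredients with the paper's proof (RSC, the $\mu$-weak-concavity bound on $q_\lambda$, and the $\delta$-gap from strict dual feasibility), but the execution differs in a few instructive ways. The paper does \emph{not} first establish that $\betatil$ lies in the interior of the $\ell_1$-ball; it works directly with the one-sided variational inequality $\inprod{\nabla\Loss_\numobs(\betatil)+\nabla\rho_\lambda(\betatil)}{\betahat-\betatil}\ge 0$, combines it with the zero-subgradient condition at $\betahat$, and then routes through an intermediate cone lemma (Lemma~\ref{LemNuCone}) showing $\|\betatil-\betahat\|_1 \le (4/\delta+2)\sqrt{k}\,\|\betatil-\betahat\|_2$. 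From this it deduces only $\inprod{\zhat}{\betatil}=\|\betatil\|_1$, which together with $\|\zhat_{S^c}\|_\infty<1$ forces $\betatil_{S^c}=0$; the stronger conclusion $\betatil=\betahat$ is deferred to the strict convexity of the restricted program (Lemma~\ref{LemSecOrder}). Your route---invoking Lemma~\ref{LemEll2Bounds} to obtain interiority and hence a genuine subgradient equation at $\betatil$, then subtracting the two equations and decomposing $\|\betatil-\betahat\|_1$ directly over $S$ and $S^c$---is more streamlined and delivers $\betatil=\betahat$ in one pass, bypassing both the cone lemma and the separate uniqueness argument.

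One small omission: you apply the local RSC branch~\eqref{EqnLocalRSC} without first checking that $\|\betatil-\betahat\|_2\le 1$. The paper handles this explicitly by a preliminary argument using the global branch~\eqref{EqnL2Bd} to rule out $\|\betatil-\betahat\|_2>1$. In your framework the same bound can be obtained from Lemma~\ref{LemEll2Bounds} applied to both $\betatil$ and $\betahat$ (the latter is a stationary point of the full program once strict dual feasibility is assumed) together with the upper bound on $\lambda$ in~\eqref{EqnLambdaCond}, but this step should be made explicit before invoking~\eqref{EqnLocalRSC}.
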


Turning to the uniqueness assertion, note that since all stationary
points are supported in $S$, any stationary point $\betatil$ of the
program~\eqref{EqnMEst} must satisfy $\betatil = (\betatil_S,
0_{S^c})$, where $\betatil_S$ is a stationary point of the restricted
program~\eqref{EqnRestrictedM}. By Lemma~\ref{LemSecOrder}, the
restricted program is strictly convex. Hence, the vector $\betatil_S$,
and consequently also $\betatil$, is unique.

%%%%%%%%%%%%%%%%%%%%%%%%%%%%%%%%%%%%%%%%%%%%%%%%%%%%%%%%%%%%%%%%%%%%%%%%

%%%%%%%%%%%%%%%%%%%%%%%%%%%%%%%%%%%%%%%%%%%%%%%%%%%%%%%%%%%%%%%%%%%%%%%%%%%%%%

\subsection{Proof of Lemma~\ref{LemSecOrder}}
\label{AppLemSecOrder}

We begin by establishing the bound $\left(\nabla^2
\Loss_\numobs(\beta)\right)_{SS} \succeq \left(\alpha_1 - \tau_1
\frac{\kdim \log \pdim}{\numobs}\right) I$, for all $\beta \in
\real^\pdim$. Equivalently,
\begin{equation}
\label{EqnLychee}
v^T \left(\nabla^2 \Loss_\numobs(\beta)\right) v \ge \left(\alpha_1 -
\tau_1 \frac{k \log p}{n}\right), \qquad \mbox{$\forall v \in \big \{ v \in
  \real^p \mid \supp(v) \subseteq S, \|v\|_2 = 1 \big \}$.}
\end{equation}
When this lower bound holds, we are guaranteed that
\begin{equation*}
\left(\nabla^2 \left(\Loss_\numobs(\beta) - \frac{\mu}{2}
\|\beta\|_2^2\right)\right)_{SS} = \left(\nabla^2
\Loss_\numobs(\beta)\right)_{SS} - \mu I \succeq \left(\alpha_1 - \mu
- \tau_1 \frac{k \log p}{n}\right)I,
\end{equation*}
so $\Loss_\numobs(\beta) - \frac{\mu}{2} \|\beta\|_2^2$ is strictly convex
on $\real^S$ under the prescribed sample size.
	
In order to prove the bound~\eqref{EqnLychee}, consider a fixed $v \in
\real^p$ such that $\supp(v) \subseteq S$ and $\|v\|_2 = 1$. For this
vector, we have $\nabla^2 \Loss_\numobs(\beta) v = \lim \limits_{t
  \rightarrow 0} \left\{\frac{\nabla \Loss_\numobs(\beta + tv) -
  \nabla \Loss_\numobs(\beta)}{t}\right\}$, so
\begin{align}
\label{EqnBaklava}
v^T \left(\nabla^2 \Loss_\numobs(\beta)\right) v = \lim_{t \rightarrow 0}
\left\{\frac{\inprod{\nabla \Loss_\numobs(\beta + tv) - \nabla
    \Loss_\numobs(\beta)}{tv}}{t^2}\right\}.
\end{align}
Furthermore, by the RSC assumption~\eqref{EqnRSC} and the fact that $\|v\|_2 =
1$, we have
\begin{equation}
\label{EqnTorte}
\inprod{\nabla \Loss_\numobs(\beta + tv) - \nabla \Loss_\numobs(\beta)}{tv} \ge
t^2\left( \alpha_1 \|v\|_2^2 - \tau_1 \frac{\log p}{n}
\|v\|_1^2\right), \qquad \text{for } t \le 1.
\end{equation}
Since $\supp(v) \subseteq S$, we also have $\|v\|_1 \le \sqrt{k}
\|v\|_2$. Combining this bound with equations~\eqref{EqnBaklava}
and~\eqref{EqnTorte} then gives the desired
inequality~\eqref{EqnLychee}.

Finally, we note the decomposition $\Lossbar_n(\beta) =
\left(\Loss_\numobs(\beta) - \frac{\mu}{2} \|\beta\|_2^2\right) +
\left(\frac{\mu}{2} \|\beta\|_2^2 - q_\lambda(\beta)\right)$, showing
that $\Lossbar_n$ is the sum of a strictly convex and convex function
over $\real^S$. Hence, $\Lossbar_n \mid_S$ is strictly convex, as
claimed. The strict convexity of $\Lossbar_n(\beta) + \lambda
\|\beta\|_1$ over $\real^S$ follows immediately.

%%%%%%%%%%%%%%%%%%%%%%%%%%%%%%%%%%%%%%%%%%%%%%%%%%%%%%%%%%%%%%%%%%%%%%%%%%

\subsection{Proof of Lemma~\ref{LemStationBd}}
\label{AppStationBd}

Let $\nutil \defn \betatil - \betastar$. We first show that
$\|\nutil\|_2 \le 1$. Suppose on the contrary that $\|\nutil\|_2 >
1$. By inequality~\eqref{EqnL2Bd}, we have $\inprod{\nabla
  \Loss_\numobs(\betatil) - \nabla \Loss_\numobs(\betahat)}{\nutil}
\ge \alpha_2 \|\nutil\|_2 - \tau_2 \sqrt{\frac{\log p}{n}}
\|\nutil\|_1$.  Moreover, since $\betahat$ is feasible, the
first-order optimality condition gives 
\begin{align}
\label{EqnKiwi}
0 \le \inprod{\nabla \Loss_\numobs(\betatil) + \nabla
  \rho_\lambda(\betatil)}{\betahat - \betatil}.
\end{align}
Summing the two preceding inequalities yields
\begin{equation}
\label{EqnLemon}
\alpha_2 \|\nutil\|_2 - \tau_2 \sqrt{\frac{\log p}{n}} \|\nutil\|_1
\le \inprod{- \nabla \Loss_\numobs(\betahat) - \nabla
  \rho_\lambda(\betatil)}{\nutil}.
\end{equation}
Since $\betahat$ is an interior local minimum, we have $\nabla
\Loss_\numobs(\betahat) + \nabla \rho_\lambda(\betahat) = 0$. Hence,
inequality~\eqref{EqnLemon} implies that
\begin{align*}
\alpha_2 \|\nutil\|_2 - \tau_2 \sqrt{\frac{\log p}{n}} \|\nutil\|_1 \,
\le \, \inprod{\nabla \rho_\lambda(\betahat) - \nabla
  \rho_\lambda(\betatil)}{\nutil} & \le \left(\|\nabla
\rho_\lambda(\betahat)\|_\infty + \|\nabla
\rho_\lambda(\betatil)\|_\infty\right) \|\nutil\|_1 \\
& \le 2 \lambda \|\nutil\|_1,
\end{align*}
where the bound $\|\nabla \rho_\lambda(\beta)\|_\infty \le \lambda$
holds by Lemma~\ref{LemSmiley} in
Appendix~\ref{AppAmenable}. Rearranging, we then have
\begin{equation*}
\|\nutil\|_2 \le \frac{\|\nutil\|_1}{\alpha_2} \left( 2 \lambda +
\tau_2 \sqrt{\frac{\log p}{n}} \right) \leq \frac{2R}{\alpha_2} \left
( 2 \lambda + \tau_2 \sqrt{\frac{\log p}{n}} \right).
\end{equation*}
Since $\lambda \le \frac{\alpha_2}{8R}$ and $n \ge \frac{16R^2
  \tau_2^2}{\alpha_2^2} \log p$ by assumption, this implies that
\mbox{$\|\nutil\|_2 \le 1$,} as claimed.

Now, applying the RSC condition~\eqref{EqnL2Bd}, we have
\begin{equation*}
\inprod{\nabla \Loss_\numobs(\betatil) - \nabla
  \Loss_\numobs(\betahat)}{\nutil} \ge \alpha_1 \|\nutil\|_2^2 -
\tau_1 \frac{\log p}{n} \|\nutil\|_1^2,
\end{equation*}
which implies that
\begin{equation}
\label{EqnGuava}
\inprod{\nabla \Lossbar_n(\betatil) - \nabla
  \Lossbar_n(\betahat)}{\nutil} \ge (\alpha_1 - \mu) \|\nutil\|_2^2 -
\tau_1 \frac{\log p}{n} \|\nutil\|_1^2.
\end{equation}
By inequality~\eqref{EqnKiwi}, we also have
\begin{equation}
\label{EqnDonut}
0 \le \inprod{\nabla \Lossbar_n(\betatil)}{\betahat - \betatil} +
\lambda \cdot \inprod{\ztil}{\betahat - \betatil},
\end{equation}
where $\ztil \in \partial \|\betatil\|_1$. From the zero-subgradient
condition~\eqref{EqnZeroSub}, we have $\inprod{\nabla
  \Lossbar_n(\betahat) + \lambda \zhat}{\betatil - \betahat} = 0$.
Combining with inequality~\eqref{EqnDonut} then yields
\begin{equation}
\label{EqnRambutan}
0 \le \inprod{\nabla \Lossbar_n(\betahat) - \nabla
  \Lossbar_n(\betatil)}{\betatil - \betahat} + \lambda \cdot
\inprod{\zhat}{\betatil} - \lambda \|\betahat\|_1 + \lambda \cdot
\inprod{\ztil}{\betahat} - \lambda \|\betatil\|_1.
\end{equation}
Rearranging, we have
\begin{align}
\label{EqnRaspberry}
\lambda \|\betatil\|_1 - \lambda \cdot \inprod{\zhat}{\betatil} & \le
\inprod{\nabla \Lossbar_n(\betahat) - \nabla
  \Lossbar_n(\betatil)}{\betatil - \betahat} + \lambda \cdot
\inprod{\ztil}{\betahat} - \lambda \|\betahat\|_1 \notag \\
& \le \inprod{\nabla \Lossbar_n(\betahat) - \nabla
  \Lossbar_n(\betatil)}{\betatil - \betahat} \notag \\
& \le \tau_1 \frac{\log p}{n} \|\nutil\|_1^2 - (\alpha_1-\mu)
\|\nutil\|_2^2,
\end{align}
where the second inequality comes from the fact that
$\inprod{\ztil}{\betahat} \le \|\ztil\|_\infty \cdot \|\betahat\|_1
\le \|\betahat\|_1$, and the third inequality comes from the
bound~\eqref{EqnGuava}. Finally, we need a lemma showing that $\nutil$
lies in a cone set:
\begin{lem*}
\label{LemNuCone}
If $\|\zhat_{S^c}\|_\infty \le 1 - \delta$ for some $\delta \in (0,1]$
and $\lambda \ge \frac{4 R \tau_1 \log p}{\delta n}$, then
\begin{equation*}
\|\nutil\|_1 \le \left(\frac{4}{\delta} + 2\right) \sqrt{k}
\|\nutil\|_2.
\end{equation*}
\end{lem*}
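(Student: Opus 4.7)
\textbf{Proof plan for Lemma~\ref{LemNuCone}.} The plan is to start from inequality~\eqref{EqnRaspberry} (which is already established) and lower-bound its left-hand side by a combination of $\|\nutil_S\|_1$ and $\|\nutil_{S^c}\|_1$, then use the hypothesis on $\lambda$ to absorb the tolerance term coming from RSC. First I would decompose \mbox{$\betatil = \betastar + \nutil$} with \mbox{$\supp(\betastar) = S$}, so that $\betatil_{S^c} = \nutil_{S^c}$ and the reverse triangle inequality gives
\begin{equation*}
\|\betatil\|_1 \;=\; \|\betastar_S + \nutil_S\|_1 + \|\nutil_{S^c}\|_1 \;\ge\; \|\betastar_S\|_1 - \|\nutil_S\|_1 + \|\nutil_{S^c}\|_1.
\end{equation*}
On the other hand, splitting the inner product by $S$ and $S^c$, using $\|\zhat_S\|_\infty \le 1$ and the strict dual feasibility bound $\|\zhat_{S^c}\|_\infty \le 1-\delta$, and again using $\betatil_{S^c} = \nutil_{S^c}$, I would upper-bound
\begin{equation*}
\inprod{\zhat}{\betatil} \;\le\; \|\betastar_S\|_1 + \|\nutil_S\|_1 + (1-\delta)\|\nutil_{S^c}\|_1.
\end{equation*}

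Combining these two estimates yields \mbox{$\|\betatil\|_1 - \inprod{\zhat}{\betatil} \ge \delta\|\nutil_{S^c}\|_1 - 2\|\nutil_S\|_1$.} Substituting this into the left-hand side of~\eqref{EqnRaspberry} and discarding the negative $-(\alpha_1-\mu)\|\nutil\|_2^2$ term on the right (which only helps us), I obtain
\begin{equation*}
\lambda\delta \|\nutil_{S^c}\|_1 \;\le\; 2\lambda\|\nutil_S\|_1 + \tau_1 \frac{\log p}{n}\|\nutil\|_1^2.
\end{equation*}

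Next I would linearize the quadratic tolerance term using the crude feasibility bound \mbox{$\|\nutil\|_1 \le \|\betatil\|_1 + \|\betastar\|_1 \le 2R$}, which gives \mbox{$\tau_1 \tfrac{\log p}{n}\|\nutil\|_1^2 \le \tfrac{2R\tau_1 \log p}{n}\|\nutil\|_1$.} The hypothesis $\lambda \ge \tfrac{4R\tau_1 \log p}{\delta n}$ is exactly what is needed to ensure $\tfrac{2R\tau_1 \log p}{n} \le \tfrac{\lambda \delta}{2}$. Splitting $\|\nutil\|_1 = \|\nutil_S\|_1 + \|\nutil_{S^c}\|_1$ and moving the $\tfrac{\lambda \delta}{2}\|\nutil_{S^c}\|_1$ term to the left side produces
\begin{equation*}
\tfrac{\lambda\delta}{2}\|\nutil_{S^c}\|_1 \;\le\; \left(2\lambda + \tfrac{\lambda\delta}{2}\right)\|\nutil_S\|_1,
\end{equation*}
i.e., $\|\nutil_{S^c}\|_1 \le \bigl(\tfrac{4}{\delta} + 1\bigr)\|\nutil_S\|_1$. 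Adding $\|\nutil_S\|_1$ to both sides and applying the elementary inequality $\|\nutil_S\|_1 \le \sqrt{k}\|\nutil_S\|_2 \le \sqrt{k}\|\nutil\|_2$ finishes the proof.

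The manipulations are entirely routine; the only delicate point is the correct accounting of signs when combining the two $\ell_1$ bounds with the strict dual feasibility of $\zhat_{S^c}$ to produce the positive coefficient $\delta$ in front of $\|\nutil_{S^c}\|_1$. The assumption $\lambda \gtrsim \tfrac{R\tau_1 \log p}{\delta n}$ is precisely calibrated to dominate the RSC tolerance term after the $\|\nutil\|_1 \le 2R$ linearization, so no further restricted strong convexity input is needed at this stage.
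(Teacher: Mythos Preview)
Your proposal is correct and follows essentially the same route as the paper: both arguments combine the RSC lower bound with strict dual feasibility to reach the intermediate inequality $\tfrac{\delta}{2}\|\nutil_{S^c}\|_1 \le (2+\tfrac{\delta}{2})\|\nutil_S\|_1$, then use $\|\nutil_S\|_1 \le \sqrt{k}\|\nutil\|_2$. The only cosmetic difference is that you work directly from~\eqref{EqnRaspberry} and decompose $\betatil$ around $\betastar$, whereas the paper restarts from~\eqref{EqnRambutan}/\eqref{EqnGuava} (obtaining~\eqref{EqnPear}) and bounds the terms $\inprod{\ztil}{\betahat}-\|\betatil\|_1$ and $\inprod{\zhat}{\nutil}$ separately via $\betahat$; since both $\betastar$ and $\betahat$ are supported on $S$, the two decompositions yield identical arithmetic.
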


\begin{proof}
From inequality~\eqref{EqnRambutan} together with the RSC
bound~\eqref{EqnGuava}, we have
\begin{subequations}
\begin{equation}
\label{EqnPear}
(\alpha_1 - \mu) \|\nutil\|_2^2 - \tau_1 \frac{\log p}{n}
\|\nutil\|_1^2 \le \inprod{\nabla \Lossbar_n(\betatil) - \nabla
  \Lossbar_n(\betahat)}{\nutil} \le \lambda \cdot
\inprod{\ztil}{\betahat} - \lambda \|\betatil\|_1 + \lambda \cdot
\inprod{\zhat}{\nutil}.
\end{equation}
Note that since $\supp(\betahat) \subseteq S$, we have
\begin{equation}
\label{EqnMilk}
\lambda \cdot \inprod{\ztil}{\betahat} - \lambda \|\betatil\|_1 \le
\lambda \|\betahat\|_1 - \lambda \|\betatil\|_1 = \lambda
\left(\|\betahat_S\|_1 - \|\betatil_S\|_1 -
\|\betatil_{S^c}\|_1\right) \le \lambda \left(\|\nutil_S\|_1 -
\|\nutil_{S^c}\|_1\right).
\end{equation}
Furthermore, we have
\begin{align}
\label{EqnJuice}
\lambda \cdot \inprod{\zhat}{\nutil} =
\lambda\left(\inprod{\zhat_S}{\nutil_S} +
\inprod{\zhat_{S^c}}{\nutil_{S^c}}\right) & \le \lambda
\left(\|\zhat_S\|_\infty \cdot \|\nutil_S\|_1 + \|\zhat_{S^c}\|_\infty
\cdot \|\nutil_{S^c}\|_1\right) \notag \\
& \le \lambda \left(\|\nutil_S\|_1 + (1-\delta) \cdot
\|\nutil_{S^c}\|_1\right).
\end{align}
\end{subequations}
Combining inequalities~\eqref{EqnPear}, \eqref{EqnMilk},
and~\eqref{EqnJuice} then yields
\begin{equation}
\label{EqnBlueberry}
-\tau_1 \frac{\log p}{n} \|\nutil\|_1^2 \le (\alpha_1 - \mu)
\|\nutil\|_2^2 - \tau_1 \frac{\log p}{n} \|\nutil\|_1^2 \le \lambda
\left(2 \|\nutil_S\|_1 - \delta \|\nutil_{S^c}\|_1\right).
\end{equation}
Under the assumption on $\lambda$, we have $\tau_1 \frac{\log p}{n}
\|\nutil\|_1 \le 2R \tau_1 \frac{\log p}{n} \le \frac{\delta}{2} \cdot
\lambda$, so that inequality~\eqref{EqnBlueberry} implies
\begin{equation*}
-\frac{\delta}{2} \cdot \lambda \|\nutil\|_1 \le \lambda \left(2
\|\nutil_S\|_1 - \delta \|\nutil_{S^c}\|_1\right), \qquad \mbox{or
  equivalently}, \qquad \frac{\delta}{2} \|\nutil_{S^c}\|_1 \le \left(2
+ \frac{\delta}{2} \right) \|\nutil_S\|_1.
\end{equation*}
Putting together the pieces, we then have
\begin{equation*}
\|\nutil\|_1 = \|\nutil_S\|_1 + \|\nutil_{S^c}\|_1 \le \|\nutil_S\|_1
+ \left(\frac{4}{\delta} + 1\right) \|\nutil_S\|_1 \le
\left(\frac{4}{\delta} + 2\right) \sqrt{k} \|\nutil\|_2,
\end{equation*}
which establishes the claim.
\end{proof}

Combining Lemma~\ref{LemNuCone} with inequality~\eqref{EqnRaspberry} then gives
\begin{equation*}
\lambda \|\betatil\|_1 - \lambda \cdot \inprod{\zhat}{\betatil} \le
\tau_1 \frac{k \log p}{n} \left(\frac{4}{\delta} + 2\right)^2
\|\nutil\|_2^2 - (\alpha_1-\mu) \|\nutil\|_2^2.
\end{equation*}
Hence, if $n \ge \frac{2\tau_1}{\alpha_1-\mu} \left(\frac{4}{\delta} +
2\right)^2 k \log p$, we have $\lambda \|\betatil\|_1 - \lambda \cdot
\inprod{\zhat}{\betatil} \le -\frac{\alpha_1-\mu}{2} \|\nutil\|_2^2
\le 0$.  On the other hand, H\"{o}lder's inequality gives $\lambda
\cdot \inprod{\zhat}{\betatil} \le \lambda \|\zhat\|_\infty \cdot
\|\betatil\|_1 \le \lambda \|\betatil\|_1$.  It follows that we must
have $\inprod{\zhat}{\betatil} = \|\betatil\|_1$.  Since
$\|\zhat_{S^c}\|_\infty < 1$, we conclude that $\betatil_j = 0$, for
all $j \notin S$. Hence, $\supp(\betatil) \subseteq S$, as
claimed.

%%%%%%%%%%%%%%%%%%%%%%%%%%%%%%%%%%%%%%%%%%%%%%%%%%%%%%%%%%%%%%%%%%%%%%%%%

\section{Proof of Theorem~\ref{ThmEllInf}}
\label{SecThmEllInf}

Note that by the fundamental theorem of calculus, $\Qhat$ satisfies
\begin{equation*}
\Qhat (\betahat - \betastar) = \nabla \Loss_\numobs(\betahat) - \nabla \Loss_\numobs(\betastar).
\end{equation*}
By the zero-subgradient condition~\eqref{EqnZeroSub} and the
invertibility of $\left(\nabla^2 \Loss_\numobs(\betatil)_{SS}\right)$,
we then have
\begin{equation*}
\betahat_S - \betastar_S = \left(\Qhat_{SS}\right)^{-1} \left(- \nabla
\Loss_\numobs(\betastar)_S + \nabla q_\lambda(\betahat_S) - \lambda
\zhat_S\right),
\end{equation*}
implying that
\begin{equation}
\label{EqnGrapefruit}
\|\betahat - \betastar\|_\infty \le \left\|\left(\Qhat_{SS}\right)^{-1} \left(\nabla
\Loss_\numobs(\betastar)_S - \nabla q_{\lambda}(\betahat_S) + \lambda
\zhat_S\right)\right\|_\infty.
\end{equation}
Lemma~\ref{LemSmiley} in Appendix~\ref{AppAmenable} guarantees that
$\left\|\left(\nabla q_\lambda(\betahat_S) - \lambda
\zhat_S\right)\right\|_\infty \le \lambda$, so
\begin{align*}
\|\betahat - \betastar\|_\infty & \le \left\|\left(\Qhat_{SS}\right)^{-1} \nabla
\Loss_\numobs(\betastar)_S\right\|_\infty + \left\|\left(\Qhat_{SS}\right)^{-1} \left(\nabla q_\lambda(\betahat_S)
- \lambda \zhat_S\right)\right\|_\infty \\
& \le \left\|\left(\Qhat_{SS}\right)^{-1} \nabla \Loss_\numobs(\betastar)_S
\right\|_\infty + \lambda \opnorm{\left(\Qhat_{SS}\right)^{-1}}_\infty,
\end{align*}
which is inequality~\eqref{EqnHoneydew}. \\
 
\noindent To establish inequality~\eqref{EqnWatermelon}, we use the
following lemma:
\begin{lem*}
\label{LemSInfty}
Suppose $\rho_\lambda$ is $(\mu, \gamma)$-amenable, and the
bound~\eqref{EqnLavender} holds. Then $|\betahat_j| \ge \gamma
\lambda$ for all $j \in S$, and in particular, $q'_\lambda(\betahat_j)
= \lambda \cdot \sign(\betahat_j)$.
\end{lem*}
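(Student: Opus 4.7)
The plan is to combine the $\ell_\infty$-bound from part (a) with the beta-min lower bound~\eqref{EqnLavender} via the reverse triangle inequality, and then invoke property (vii) of $(\mu,\gamma)$-amenability.

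First, I would observe that for any $j \in S$, the reverse triangle inequality gives
\begin{equation*}
|\betahat_j| \ge |\beta^*_j| - |\betahat_j - \beta^*_j| \ge \betamin - \|\betahat - \betastar\|_\infty.
\end{equation*}
Next, I would apply the already-established bound~\eqref{EqnHoneydew} from part (a) of Theorem~\ref{ThmEllInf}, which yields
\begin{equation*}
\|\betahat - \betastar\|_\infty \le \left\|\left(\Qhat_{SS}\right)^{-1} \nabla \Loss_\numobs(\betastar)_S\right\|_\infty + \lambda \opnorm{\left(\Qhat_{SS}\right)^{-1}}_\infty.
\end{equation*}
Substituting this into the previous display and then invoking the hypothesized lower bound~\eqref{EqnLavender} on $\betamin$, the two $(\Qhat_{SS})^{-1}$-dependent terms cancel exactly, leaving $|\betahat_j| \ge \gamma \lambda$.

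For the second assertion, I would appeal directly to the definition of $(\mu,\gamma)$-amenability: condition (vii) states that $\rho_\lambda'(t) = 0$ for all $t \ge \gamma \lambda$, and the remark immediately following the definition records that consequently $q_\lambda'(t) = \lambda \cdot \sign(t)$ whenever $|t| \ge \gamma \lambda$. Applying this to $t = \betahat_j$ (whose magnitude was just shown to be at least $\gamma\lambda$) gives $q'_\lambda(\betahat_j) = \lambda \cdot \sign(\betahat_j)$, as claimed.

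There is really no substantive obstacle here; the proof is essentially a one-line cancellation, with the only subtlety being to make sure the constants line up exactly as the beta-min condition was engineered to match the $\ell_\infty$-bound from part (a). The lemma's role is structural rather than technical: it is precisely what is needed downstream to show that $\betahat$ coincides with the oracle estimator $\boracle$, since once $q'_\lambda(\betahat_j) = \lambda\,\sign(\betahat_j)$ on $S$, the penalty term in the zero-subgradient condition~\eqref{EqnZeroSub} cancels on $S$ and the restricted problem reduces to the unpenalized oracle problem.
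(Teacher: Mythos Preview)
Your proposal is correct and follows essentially the same approach as the paper's own proof: reverse triangle inequality, substitution of the $\ell_\infty$-bound~\eqref{EqnHoneydew}, cancellation against the beta-min condition~\eqref{EqnLavender}, and an appeal to property (vii) of $(\mu,\gamma)$-amenability. The paper's argument is line-for-line the same, only more terse.
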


\begin{proof}
	
By inequality~\eqref{EqnHoneydew} and the triangle inequality, we have
\begin{align*}
|\betahat_j| \ge |\betastar_j| - |\betahat_j - \betastar_j| & \ge
\betamin - \|\betahat - \betastar\|_\infty \\
& \ge \betamin - \left(\left\|\left(\Qhat_{SS} \right)^{-1} \nabla
\Loss_\numobs(\betastar)_S\right\|_\infty + \lambda
\opnorm{\left(\Qhat_{SS}\right)^{-1}}_\infty\right),
\end{align*}
for all $j \in S$. Hence, if the bound~\eqref{EqnLavender} holds, we
must have $|\betahat_j| \ge \gamma \lambda$, and the desired results
follow.
\end{proof}

Lemma~\ref{LemSInfty} implies that $\nabla q_\lambda(\betahat_S) =
\lambda \zhat_S$.  Hence, the zero-subgradient
condition~\eqref{EqnZeroSub} reduces to $\left(\nabla \Loss_\numobs
(\betahat_S) \right) \Big |_S = 0$.  Since $\Loss_\numobs$ is strictly
convex on $\real^S$ by Lemma~\ref{LemSecOrder}, this zero-gradient
condition implies that $\betahat_S$ is the unique global minimum of
$\LossSn$, so we have the equalities \mbox{$\betahat_S = \boracle_S$}
and \mbox{$\betahat = \boracle$,} as claimed. Finally,
inequality~\eqref{EqnGrapefruit} simplifies to
inequality~\eqref{EqnWatermelon}, using the equality \mbox{$\nabla
  q_\lambda(\betahat_S) = \lambda \zhat_S$.}

%%%%%%%%%%%%%%%%%%%%%%%%%%%%%%%%%%%%%%%%%%%%%%%%%%%%%%%%%%%%%%%%%%%%%%%%

\section{Establishing strict dual feasibility}
\label{AppDual}

In this Appendix, we derive conditions that allow us to establish the
strict dual feasibility conditions required in order to apply
Theorem~\ref{ThmSuppRecovery}. We use these derivations to prove
Corollaries~\ref{CorOLSLoss}--\ref{CorGLMLoss}.

By the zero-subgradient condition~\eqref{EqnZeroSub}, we have
\begin{equation*}
\left(\nabla \Loss_\numobs(\betahat) - \nabla
\Loss_\numobs(\betastar)\right) + \left(\nabla
\Loss_\numobs(\betastar) - \nabla q_\lambda(\betahat)\right) + \lambda
\zhat = 0.
\end{equation*}
Defining $\Qhat \defn \int_0^1 \nabla^2 \Loss_n\left(\betastar +
t(\betahat - \betastar)\right) dt$, we then have
\begin{equation*}
\Qhat (\betahat - \betastar) + \left(\nabla
\Loss_\numobs(\betastar) - \nabla q_\lambda(\betahat)\right) + \lambda
\zhat = 0.
\end{equation*}
In block form, this means
\begin{equation}
\label{EqnZeroSubBlock}
\begin{bmatrix}
\Qhat_{SS} & \Qhat_{SS^c} \\
\Qhat_{S^cS} & \Qhat_{S^cS^c}
\end{bmatrix}  
\begin{bmatrix}		\betahat_S - \betastar_S \\ 0
\end{bmatrix}
+
\begin{bmatrix} \nabla \Loss_\numobs(\betastar)_S - \nabla
                q_\lambda(\betahat_S) \\ \nabla
                \Loss_\numobs(\betastar)_{S^c} - \nabla
                q_\lambda(\betahat_{S^c})
\end{bmatrix}
+ \lambda \begin{bmatrix} \zhat_S \\ \zhat_{S^c}
        \end{bmatrix}
= 0.
\end{equation}
By simple algebraic manipulations, we then have
\begin{multline}
\label{EqnRabbit0}
\zhat_{S^c} = \frac{1}{\lambda} \Big\{\left(\nabla
q_\lambda(\betahat_{S^c}) - \nabla \Loss_\numobs(\betastar)_{S^c}\right) +\Qhat_{S^cS} \left(\Qhat_{SS}\right)^{-1} \left(\nabla \Loss_\numobs(\betastar)_S -
\nabla q_\lambda(\betahat_S) + \lambda \zhat_S\right)\Big\}.
\end{multline}
Note that by the selection property (vi), we have $\nabla
q_\lambda(\betahat_{S^c}) = \nabla q_\lambda(0_{S^c}) = 0_{S^c}$, so
the first term in equation~\eqref{EqnRabbit0} vanishes, giving
\begin{equation}
\label{EqnRabbit}
\zhat_{S^c} = \frac{1}{\lambda} \Big\{- \nabla
\Loss_\numobs(\betastar)_{S^c} + \Qhat_{S^cS}
\left(\Qhat_{SS}\right)^{-1} \left(\nabla
\Loss_\numobs(\betastar)_S - \nabla q_\lambda(\betahat_S) + \lambda
\zhat_S\right)\Big\}.
\end{equation}
If the unbiasedness property (vii) also holds, we have the following result:
\begin{prop*}
\label{PropUnbiased}
Under the conditions of Theorem~\ref{ThmSuppRecovery}, suppose
$\rho_\lambda$ is $(\mu, \gamma)$-amenable. Also suppose
inequality~\eqref{EqnLavender} holds. Then strict dual feasibility
holds provided 
\begin{subequations}
\begin{equation}
\label{EqnPlum1}
\|\nabla \Loss_\numobs(\betastar)\|_\infty \le \frac{1-\delta}{2}
\cdot \lambda, \qquad \text{and}
\end{equation}
\begin{equation}
\label{EqnGrape1}
\left\|\Qhat_{S^cS} \left(\Qhat_{SS}\right)^{-1} \nabla
\Loss_\numobs(\betastar)_S\right\|_\infty \le \frac{1-\delta}{2} \cdot
\lambda.
\end{equation}
\end{subequations}
\end{prop*}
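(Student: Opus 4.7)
The plan is to start from equation~\eqref{EqnRabbit}, which already expresses $\zhat_{S^c}$ as the sum of two terms involving $\nabla \Loss_\numobs(\betastar)$ and one remainder involving $\nabla q_\lambda(\betahat_S) - \lambda \zhat_S$. The key observation is that the $(\mu, \gamma)$-amenability, combined with the beta-min condition~\eqref{EqnLavender}, allows us to kill the remainder term exactly (rather than bounding it crudely by $\lambda$ as in Lemma~\ref{LemSmiley}). Once that term is gone, strict dual feasibility follows from the triangle inequality applied to the two hypothesized bounds \eqref{EqnPlum1} and \eqref{EqnGrape1}.

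The first step is to show that $|\betahat_j| \geq \gamma \lambda$ for every $j \in S$. This is precisely the content of Lemma~\ref{LemSInfty}, whose proof relies only on the $\ell_\infty$-bound \eqref{EqnHoneydew} and the lower bound \eqref{EqnLavender}. It is important to note that the derivation of \eqref{EqnHoneydew} in the proof of Theorem~\ref{ThmEllInf} uses only the zero-subgradient condition~\eqref{EqnZeroSub} from step (ii) of the PDW construction together with the invertibility of $\Qhat_{SS}$ (guaranteed by Lemma~\ref{LemSecOrder} under the RSC sample size), and in particular does \emph{not} require strict dual feasibility. Thus there is no circularity in invoking Lemma~\ref{LemSInfty} here.

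The second step converts this magnitude bound into an equality for $\nabla q_\lambda(\betahat_S)$. Since $\rho_\lambda$ is $(\mu, \gamma)$-amenable, property (vii) gives $\rho_\lambda'(t) = 0$ for $|t| \geq \gamma \lambda$, so $q_\lambda'(t) = \lambda \cdot \sign(t)$ on that range. Applied coordinatewise to $\betahat_S$, whose entries all exceed $\gamma \lambda$ in magnitude by step one, this yields $\nabla q_\lambda(\betahat_S) = \lambda \cdot \sign(\betahat_S) = \lambda \zhat_S$, where the last equality uses that the subgradient of $|\cdot|$ is single-valued and equal to the sign at nonzero arguments. Substituting into~\eqref{EqnRabbit} collapses the expression to
\begin{equation*}
\zhat_{S^c} \; = \; \frac{1}{\lambda}\Big\{-\nabla \Loss_\numobs(\betastar)_{S^c} \;+\; \Qhat_{S^cS}\left(\Qhat_{SS}\right)^{-1} \nabla \Loss_\numobs(\betastar)_S\Big\}.
\end{equation*}

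The third and final step is a one-line triangle inequality: the $\ell_\infty$-norm of the displayed expression is at most $\tfrac{1}{\lambda}(\|\nabla \Loss_\numobs(\betastar)_{S^c}\|_\infty + \|\Qhat_{S^cS}(\Qhat_{SS})^{-1}\nabla \Loss_\numobs(\betastar)_S\|_\infty)$, which is bounded by $\tfrac{1-\delta}{2} + \tfrac{1-\delta}{2} = 1 - \delta$ by hypotheses~\eqref{EqnPlum1} and~\eqref{EqnGrape1}. The only subtle point in the argument is the first step: verifying that the dependencies among \eqref{EqnHoneydew}, Lemma~\ref{LemSInfty}, and strict dual feasibility are consistent, i.e.\ that \eqref{EqnHoneydew} can be used here without already having strict dual feasibility in hand. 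As noted above, this is fine because \eqref{EqnHoneydew} is a direct algebraic consequence of the PDW zero-subgradient equation together with the invertibility of $\Qhat_{SS}$.
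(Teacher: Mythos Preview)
Your proposal is correct and follows essentially the same route as the paper's proof: invoke Lemma~\ref{LemSInfty} to obtain $\nabla q_\lambda(\betahat_S) = \lambda \zhat_S$, substitute into equation~\eqref{EqnRabbit}, and finish with the triangle inequality using~\eqref{EqnPlum1} and~\eqref{EqnGrape1}. Your explicit discussion of non-circularity---that inequality~\eqref{EqnHoneydew} depends only on the zero-subgradient condition and the invertibility of $\Qhat_{SS}$, not on strict dual feasibility---is a useful clarification the paper leaves implicit.
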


\begin{proof}
	
Combining the result of Lemma~\ref{LemSInfty} in
Appendix~\ref{SecThmEllInf} with equation~\eqref{EqnRabbit}, we see
that
\begin{align*}
\zhat_{S^c} & = \frac{1}{\lambda} \left\{\nabla
\Loss_\numobs(\betastar)_{S^c} + \Qhat_{S^cS}
\left(\Qhat_{SS}\right)^{-1} \nabla \Loss_\numobs(\betastar)_S\right\}
\\
& \le \frac{1}{\lambda} \|\nabla
\Loss_\numobs(\betastar)_{S^c}\|_\infty + \frac{1}{\lambda}
\left\|\Qhat_{S^cS} \left(\Qhat_{SS}\right)^{-1} \nabla
\Loss_\numobs(\betastar)_S\right\|_\infty \\
& \le \; \frac{1 - \delta}{2} + \frac{1 - \delta}{2} \le 1 - \delta,
\end{align*}
as claimed.
\end{proof}

%%%%%%%%%%%%%%%%%%%%%%

However, for regularizers that do \emph{not} satisfy the unbiasedness
property (vii), such as the LSP and the usual $\ell_1$-norm, we impose
slightly stronger conditions to ensure strict dual feasibility. The
notion of an \emph{incoherence condition} is taken from the Lasso
literature~\cite{Zhao06, Wai09, MeiYu09}:

\begin{assumption}
\label{AsIncoherence}
There exists $\eta \in (0,1)$ such that $\opnorm{\Qhat_{S^cS} \left(\Qhat_{SS}\right)^{-1}}_\infty \le \eta$, where
$\opnorm{\cdot}_\infty$ denotes the $\ell_\infty$-operator norm.
\end{assumption}

\noindent Based on this condition, we have the following result:
\begin{prop*}
\label{PropJerk}
Under the conditions of Theorem~\ref{ThmSuppRecovery}, suppose in
addition that Assumption~\ref{AsIncoherence} holds. Then strict dual
feasibility holds provided
\begin{subequations}
\begin{equation}
\label{EqnPlum2}
\|\nabla \Loss_\numobs(\betastar)\|_\infty \le \frac{1 - \delta -
  \eta}{2} \, \lambda, \qquad \text{and}
\end{equation}
\begin{equation}
\label{EqnGrape2}
\left\|\Qhat_{S^cS} \left(\Qhat_{SS}\right)^{-1} \nabla
\Loss_\numobs(\betastar)_S\right\|_\infty \le \frac{1 - \delta - \eta}{2}
\, \lambda.
\end{equation}
\end{subequations}
\end{prop*}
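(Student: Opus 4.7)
}

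The plan is to argue directly from the closed-form expression~\eqref{EqnRabbit} for $\zhat_{S^c}$ derived in the discussion preceding Proposition~\ref{PropUnbiased}. Unlike the setting of Proposition~\ref{PropUnbiased}, the regularizer is not assumed to be $(\mu,\gamma)$-amenable, so we cannot invoke Lemma~\ref{LemSInfty} to conclude that $\nabla q_\lambda(\betahat_S) = \lambda \zhat_S$, i.e.\ we cannot make the term involving the $q_\lambda$ residual vanish. Instead, we will retain that term, bound it by $\lambda$ using only $\mu$-amenability, and absorb the resulting contribution into the incoherence slack $\eta$.

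Concretely, I would first split the right-hand side of~\eqref{EqnRabbit} as
\begin{equation*}
\zhat_{S^c} \; = \; \underbrace{\tfrac{1}{\lambda}\Bigl\{-\nabla \Loss_\numobs(\betastar)_{S^c} + \Qhat_{S^cS}\left(\Qhat_{SS}\right)^{-1}\nabla \Loss_\numobs(\betastar)_S\Bigr\}}_{T_1} \; + \; \underbrace{\tfrac{1}{\lambda}\,\Qhat_{S^cS}\left(\Qhat_{SS}\right)^{-1}\bigl(\lambda \zhat_S - \nabla q_\lambda(\betahat_S)\bigr)}_{T_2}.
\end{equation*}
Applying the triangle inequality in $\ell_\infty$ and the operator-norm bound $\|Ax\|_\infty \le \opnorm{A}_\infty\|x\|_\infty$, I obtain
\begin{equation*}
\|\zhat_{S^c}\|_\infty \; \le \; \tfrac{1}{\lambda}\|\nabla \Loss_\numobs(\betastar)\|_\infty \; + \; \tfrac{1}{\lambda}\bigl\|\Qhat_{S^cS}\left(\Qhat_{SS}\right)^{-1}\nabla \Loss_\numobs(\betastar)_S\bigr\|_\infty \; + \; \tfrac{1}{\lambda}\opnorm{\Qhat_{S^cS}\left(\Qhat_{SS}\right)^{-1}}_\infty \bigl\|\lambda \zhat_S - \nabla q_\lambda(\betahat_S)\bigr\|_\infty.
\end{equation*}
The first two terms are controlled by hypotheses~\eqref{EqnPlum2} and~\eqref{EqnGrape2}, each contributing at most $(1-\delta-\eta)/2$.

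The remaining step, which is where the argument really differs from the $(\mu,\gamma)$-amenable case, is to bound the residual $\|\lambda \zhat_S - \nabla q_\lambda(\betahat_S)\|_\infty$. Since $\zhat_S \in \partial\|\betahat_S\|_1$ satisfies $\|\zhat_S\|_\infty \le 1$ and, for each index $j \in S$ with $\betahat_{S,j}\ne 0$, the amenability conditions give $q_\lambda'(\betahat_{S,j}) = \lambda\,\sign(\betahat_{S,j}) - \rho_\lambda'(\betahat_{S,j})$ with $|\rho_\lambda'(\betahat_{S,j})| \le \lambda$ by Lemma~\ref{LemSmiley}, the coordinate-wise difference $\lambda \zhat_{S,j} - q_\lambda'(\betahat_{S,j})$ equals $\rho_\lambda'(\betahat_{S,j})$ up to sign and is therefore bounded in magnitude by $\lambda$. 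Combined with Assumption~\ref{AsIncoherence}, which gives $\opnorm{\Qhat_{S^cS}(\Qhat_{SS})^{-1}}_\infty \le \eta$, the third term contributes at most $\eta$. Summing the three pieces yields $\|\zhat_{S^c}\|_\infty \le \tfrac{1-\delta-\eta}{2} + \tfrac{1-\delta-\eta}{2} + \eta = 1-\delta$, which is precisely strict dual feasibility.

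The only genuine obstacle is the sharp bound $\|\lambda \zhat_S - \nabla q_\lambda(\betahat_S)\|_\infty \le \lambda$ (rather than the naive $2\lambda$ from separately bounding each summand); this is why the proposition requires the amenability factor $(1-\delta-\eta)/2$ on the right-hand sides of~\eqref{EqnPlum2}--\eqref{EqnGrape2} instead of $(1-\delta)/2$ as in Proposition~\ref{PropUnbiased}. Everything else is a bookkeeping application of the triangle inequality and the operator-norm definition, so no further technical work is anticipated.
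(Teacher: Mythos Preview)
Your proposal is correct and follows essentially the same approach as the paper's own proof, which likewise starts from~\eqref{EqnRabbit}, invokes Lemma~\ref{LemSmiley}(a) to obtain $\|\lambda \zhat_S - \nabla q_\lambda(\betahat_S)\|_\infty = \|\nabla \rho_\lambda(\betahat_S)\|_\infty \le \lambda$, applies the incoherence bound to the third term, and sums $\tfrac{1-\delta-\eta}{2} + \tfrac{1-\delta-\eta}{2} + \eta = 1-\delta$. The only minor omission in your write-up is the case $\betahat_{S,j} = 0$, where $q_\lambda'(0) = 0$ by the selection property and $|\lambda \zhat_{S,j}| \le \lambda$ trivially, so the coordinate-wise bound still holds.
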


\begin{proof}
Lemma~\ref{LemSmiley}(a) implies that $\|\lambda \zhat_S - \nabla
q_\lambda(\betahat_S)\|_\infty = \|\nabla
\rho_\lambda(\betahat_S)\|_\infty \le \lambda$.  Hence,
equation~\eqref{EqnRabbit}, together with the triangle inequality,
implies
\begin{align*}
\|\zhat_{S^c}\|_\infty & \le \frac{1}{\lambda} \|\nabla
\Loss_\numobs(\betastar)\|_\infty +
\frac{1}{\lambda}\left\|\Qhat_{S^cS} \left(\Qhat_{SS}\right)^{-1}
\nabla \Loss_\numobs(\betastar)_S\right\|_\infty \\
& \qquad + \frac{1}{\lambda}\left\|\Qhat_{S^cS}
\left(\Qhat_{SS}\right)^{-1} \left(\lambda
\zhat_S - \nabla q_\lambda(\betahat_S)\right)\right\|_\infty \\
& \le \frac{1 - \delta - \eta}{2} + \frac{1 - \delta - \eta}{2} + \eta
\; = 1 - \delta,
\end{align*}
where we have used inequalities~\eqref{EqnPlum2} and~\eqref{EqnGrape2}
and the incoherence condition (Assumption~\ref{AsIncoherence}).
\end{proof}

Comparing Propositions~\ref{PropUnbiased} and~\ref{PropJerk} reveals a
theoretical benefit of using unbiased penalties such as SCAD or MCP
over LSP or the $\ell_1$-norm: Whereas the bound~\eqref{EqnGrape1} is
essentially the same as the bound~\eqref{EqnGrape2},
Proposition~\ref{PropJerk} imposes an additional incoherence condition
on the covariates.

%%%%%%%%%%%%%%%%%%%%%%%%%%%%%%%%%%%%%%%%%%%%%%%%%%%%%%%%%%%%%%%%%%%%%%%

\section{Proofs of corollaries in Section~\ref{SecMain}}
\label{AppMain}

In this section, we provide detailed proofs of the corollaries to
Theorems~\ref{ThmSuppRecovery} and~\ref{ThmEllInf} appearing in
Section~\ref{SecMain}.

%%%%%%%%%%%%%

\subsection{Proof of Corollary~\ref{CorOLSLoss}}
\label{AppCorOLSLoss}

As established in Corollary 1 of our previous work~\cite{LohWai13},
the RSC condition~\eqref{EqnRSC} holds w.h.p.\ with $\alpha_1 =
\alpha_2 = \frac{1}{2} \lambda_{\min}(\Sigma_x)$ and $\tau_1, \tau_2
\asymp 1$, under sub-Gaussian assumptions on the variables.

\subsubsection{Proof of part (a)}

We now use the machinery developed in Appendix~\ref{AppDual}. We use a
slightly more direct analysis than the result stated in
Proposition~\ref{PropJerk}, which allows us to produce tighter
bounds. Denoting $(\GamHat, \gamhat) = \left(\frac{X^TX}{n},
\frac{X^Ty}{n}\right)$, we have
\begin{equation}
\label{EqnStar}
\nabla \Loss_\numobs(\beta) = \GamHat \beta - \gamhat, \qquad
\text{and} \qquad \nabla^2 \Loss_\numobs(\beta) = \GamHat,
\end{equation}
so equation~\eqref{EqnRabbit} takes the form
\begin{align}
\label{EqnKangaroo}
\zhat_{S^c} & = \frac{1}{\lambda} \left\{-\GamHat_{S^cS} \betastar_S +
\gamhat_{S^c} + \GamHat_{S^cS} \GamHat_{SS}^{-1} \left(\GamHat_{SS}
\betastar_S - \gamhat_S\right)\right\} + \frac{1}{\lambda}
\left\{\GamHat_{S^cS} \left(\GamHat_{SS}\right)^{-1} \left(\lambda
\zhat_S - \nabla q_\lambda(\betahat_S)\right)\right\} \notag \\
& \le \frac{1}{\lambda} \left\{-\GamHat_{S^cS} \betastar_S +
\gamhat_{S^c} + \GamHat_{S^cS} \betastar_S - \GamHat_{S^cS}
\GamHat_{SS}^{-1} \gamhat_S\right\} + \opnorm{\GamHat_{S^cS}
  \left(\GamHat_{SS}\right)^{-1}}_\infty \notag \\
& \le \frac{1}{\lambda} \left\{\gamhat_{S^c} - \GamHat_{S^cS}
\GamHat_{SS}^{-1} \gamhat_S\right\} + \eta,
\end{align}
using the fact that $\|\lambda \zhat_s - \nabla
q_\lambda(\betahat_S)\|_\infty = \|\nabla
\rho_\lambda(\betahat_S)\|_\infty \le \lambda$, by
Lemma~\ref{LemSmiley}. We now write
\begin{align}
\label{EqnCyan}
\left\|\gamhat_{S^c} - \GamHat_{S^cS} \GamHat_{SS}^{-1}
\gamhat_S\right\|_\infty & = \left\|\frac{X_{S^c}^T (X_S\betastar_S +
  \epsilon)}{n} - \left(\frac{X_{S^c}^T X_S}{n}\right)
\left(\frac{X_S^T X_S}{n}\right)^{-1} \left(\frac{X_S^T(X_S\betastar_S
  + \epsilon)}{n}\right)\right\|_\infty \notag \\
& = \left\|\frac{X_{S^c}^T \epsilon}{n} - \left(\frac{X_{S^c}^T
  X_S}{n}\right) \left(\frac{X_S^T X_S}{n}\right)^{-1}
\left(\frac{X_S^T \epsilon}{n}\right)\right\|_\infty \notag \\
	& = \left\|\frac{X_{S^c}^T \Pi \epsilon}{n}\right\|_\infty,
\end{align}
where $\Pi \defn I - X_S(X_S^T X_S)^{-1} X_S^T$ is an orthogonal
projection matrix. Note that for $t > 0$, we have
\begin{equation*}
\E\left[\exp\left(t \cdot e_j^T \frac{X_{S^c}^T \Pi
    \epsilon}{n}\right) \Big | X\right] \le c t^2 \sigma_{\epsilon}^2
\left\|\frac{\Pi X_{S^c} e_j}{n}\right\|_2^2 \le c t^2
\sigma_\epsilon^2 \left\|\frac{X_{S^c} e_j}{n}\right\|_2^2,
\end{equation*}
since $\epsilon$ is a sub-Gaussian vector with parameter
$\sigma_\epsilon^2$, and $\Pi$ is a projection matrix. Taking another
expectation, we have
\begin{equation*}
\E\left[\exp\left(t \cdot e_j^T \frac{X_{S^c}^T \Pi
    \epsilon}{n}\right) \right] \le \frac{c t^2 \sigma_\epsilon^2}{n}
\cdot e_j^T \Gamma_{S^c S^c} e_j \le c' \frac{t^2 \sigma_\epsilon^2
  \sigma_x^2}{n},
\end{equation*}
so $e_j^T \frac{X_{S^c}^T \Pi \epsilon}{n}$ is sub-Gaussian with
parameter $\frac{c\sigma_\epsilon^2 \sigma_x^2}{n}$. Using
sub-Gaussian tail bounds and a union bound, we conclude from
equation~\eqref{EqnCyan} that
\begin{equation*}
\left\|\gamhat_{S^c} - \GamHat_{S^cS}\GamHat_{SS}^{-1}
\gamhat_S\right\|_\infty \le C\sqrt{\frac{\log p}{n}},
\end{equation*}
with probability at least $1 - c\exp(-c' \log p)$, provided $n
\succsim k \log p$. Hence, strict dual feasibility holds, w.h.p.,
provided $\lambda > \frac{C}{1-\eta}\sqrt{\frac{\log p}{n}}$, and the
variable selection consistency property follows by
Theorem~\ref{ThmSuppRecovery}.

Turning to $\ell_\infty$-bounds, we have
\begin{align*}
\left\|\left(\Qhat_{SS}\right)^{-1} \nabla
\Loss_n(\betastar)\right\|_\infty & = \left\|\GamHat_{SS}^{-1}
\left(\GamHat_{SS} \betastar_S - \gamhat_S\right)\right\|_\infty \\
& = \left\|\left(\frac{X_S^T X_S}{n}\right)^{-1} \left(\frac{X_S^T
  X_S}{n} \betastar_S - \frac{X_S^T y}{n}\right)\right\|_\infty \\
& = \left\|\left(\frac{X_S^T X_S}{n}\right)^{-1} \left(\frac{X_S^T
  \epsilon}{n}\right)\right\|_\infty.
\end{align*}
For $j \in S$, we may write $e_j^T \left(\frac{X_S^T
  X_S}{n}\right)^{-1} \left(\frac{X_S^T \epsilon}{n}\right) \defn
v_j^T \epsilon$. Furthermore,
\begin{equation*}
\max_{j \in S} \|v_j\|_2^2 = \frac{1}{n^2} \cdot \max_{j \in S}
\left\|X_S \left(\frac{X_S^T X_S}{n}\right)^{-1} e_j\right\|_2^2 =
\frac{1}{n} \cdot \max_{j \in S} \left|e_j^T \left(\frac{X_S^T
  X_S}{n}\right)^{-1} e_j\right| \le \frac{2}{n} \cdot
\lambda_{\max}(\Sigma_x),
\end{equation*}
with probability at least $1 - c_1 \exp(-c_2 k)$. Conditioned on $X$,
the variables $v_j^T \epsilon$ are sub-Gaussian with parameter
$\sigma_\epsilon^2 \|v_j\|_2^2$. Denoting
\begin{equation*}
\mathcal{A} \defn \left\{ |v_j^T \epsilon| \le \sigma_\epsilon
\|v_j\|_2 \sqrt{\log p}, \quad \forall j \in S\right\}
\end{equation*}
and applying a union bound, we then have
\begin{equation*}
\mprob\left(\mathcal{A}^c \; \big | \; X\right) \le c_1' \exp(-c_2'
\log p),
\end{equation*}
so
\begin{equation*}
\mprob(\mathcal{A}^c) = \int \mprob(\mathcal{A}^c \mid X) \mprob(X) \;
d\mprob(X) \le c_1 \exp(-c_2' \log p),
\end{equation*}
as well. We conclude that
\begin{equation*}
\mprob\left(\max_{j \in S} |v_j^T \epsilon| \ge \lambda_{\max}^{1/2}
(\Sigma_x) \cdot \sigma_\epsilon \sqrt{\frac{2 \log p}{n}}\right) \le
\mprob(\mathcal{A}^c) + \mprob\left(\max_{j \in S} \|v_j\|_2^2 \ge
\frac{2}{n} \cdot \lambda_{\max} (\Sigma_x)\right),
\end{equation*}
and
\begin{equation*}
\left\|\GamHat_{SS}^{-1} \left(\GamHat_{SS} \betastar_S -
\gamhat_S\right)\right\|_\infty \le \lambda_{\max}^{1/2}(\Sigma_x)
\cdot \sigma_\epsilon \sqrt{\frac{2 \log p}{n}},
\end{equation*}
with probability at least $1 - c''_1 \exp(-c''_2 \min\{k, \log
p\})$. Combining this with the assumption~\eqref{EqnCInf}, we conclude
by part (a) of Theorem~\ref{ThmEllInf} that the desired
$\ell_\infty$-bound holds.

%%%%%%%%%%%%%%

\subsubsection{Proof of part (b)}

The proof of this corollary is nearly identical to the proof of part
(a), except that equation~\eqref{EqnKangaroo} does not have the extra
term $\eta$; by the assumption of $(\mu, \gamma)$-amenability,
Lemma~\ref{LemSInfty} implies that $\lambda \zhat_S - \nabla
q_\lambda(\betahat_S) = 0$.  Hence, equation~\eqref{EqnRabbit} takes
the form

\begin{equation*}
\left\|\zhat_{S^c}\right\|_\infty \le \frac{1}{\lambda}
\left\|\gamhat_{S^c} - \GamHat_{S^cS} \GamHat_{SS}^{-1}
\gamhat_S\right\|_\infty.
\end{equation*}
Observe that the matrix concentration arguments from the proof of part
(a) also imply that \mbox{$\frac{1}{\lambda} \left\|\gamhat_{S^c} -
  \GamHat_{S^cS} \GamHat_{SS}^{-1} \gamhat_S\right\|_\infty < 1 -
  2\eta$}, w.h.p.  The remainder of the proof follows by part (b) of
Theorem~\ref{ThmEllInf}.

%%%%%%%%%%%%%%%%%%%%%%%%%%%%%%%%%%%%%%%%%%%%%%%%%%%%%%%%%%%%%%%%%%%%%%%

\subsection{Proof of Corollary~\ref{CorCorrLinearL1}}
\label{AppCorCorrLinearL1}

The expressions for $\nabla \Loss_\numobs$ and $\nabla^2
\Loss_\numobs$ are as in equation~\eqref{EqnStar}, with $(\GamHat,
\gamhat)$ defined in equation~\eqref{EqnGamCorrupted}. Furthermore, as
remarked in the proof of Corollary~\ref{CorOLSLoss}, Corollary 1 of
our previous work~\cite{LohWai13} implies that the RSC
condition~\eqref{EqnRSC} holds w.h.p., when $\alpha_1 = \alpha_2 =
\frac{1}{2} \lambda_{\min}(\Sigma_x)$ and $\tau_1, \tau_2 \asymp 1$.

We may verify using standard techniques that when the $x_i$'s and
$\epsilon_i$'s are sub-Gaussian and $n \succsim k \log p$, we have the
bound $\left\|\nabla \Loss_\numobs(\betastar)\right\|_\infty \precsim
\sqrt{\frac{\log p}{n}}$, with probability at least $1 - c_1 \exp(-c_2
\log p)$~\cite{LohWai11a}. Hence, if $\lambda \succsim
\sqrt{\frac{\log p}{n}}$, the lower bound in
inequality~\eqref{EqnLambdaCond} is satisfied w.h.p.\ We may then check
that for the choice of regularization parameters $\lambda \asymp
\sqrt{\frac{\log p}{n}}$ and $R \asymp \frac{1}{\lambda}$, and under
the scaling $n \succsim k \log p$, both bounds in
condition~\eqref{EqnLambdaCond} hold.

We now use Proposition~\ref{PropJerk} to verify condition (a) in
Theorem~\ref{ThmSuppRecovery}, establishing strict dual
feasibility~\eqref{EqnStrictDual}. From the bound~\eqref{EqnGrape2},
and with the scaling $\lambda \asymp \sqrt{\frac{\log p}{n}}$, it
suffices to show that
\begin{equation}
\label{EqnGamCond}
\left\|\GamHat_{S^cS} \GamHat_{SS}^{-1} \left(\GamHat_{SS} \betastar_S
- \gamhat_S\right)\right\|_\infty \precsim \sqrt{\frac{\log p}{n}},
\end{equation}
w.h.p. To this end, we have the following proposition, which we state
in some generality to indicate its applicability to other types of
corrupted linear models~\cite{LohWai11a}:
\begin{prop*}
\label{PropLinCase}
Suppose $\GamHat$ satisfies the deviation bound
\begin{equation}
\label{EqnDevMaster}
\mprob\left(\left|v^T \left(\GamHat - \Gamma\right) w\right| \ge t
\|v\|_2 \|w\|_2\right) \le c \exp(-c'nt^2), \qquad \forall v, w \in
\real^p.
\end{equation}
Also suppose $\gamhat$ satisfies the deviation bound
\begin{equation}
\label{EqnDevThree}
\mprob\left(\left\|A\left(\gamhat - \Gamma_{SS} \betastar\right)
\right\|_\infty \ge t \opnorm{A}_2 \right) \le c \exp(-c'nt^2 + c''
\log p), \qquad \forall A \in \real^{p \times p},
\end{equation}
and suppose $\opnorm{\Gamma}_2$ and $\opnorm{\Gamma^{-1}}_2$ are
uniformly bounded. Under the scaling $n \succsim k \cdot \max\{\log p,
k\}$ and $\lambda \asymp \sqrt{\frac{\log p}{n}}$,
inequality~\eqref{EqnGamCond} holds, with probability at least $1 - c
\exp(c' \min\{k, \log p\})$.
\end{prop*}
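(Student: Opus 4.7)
The plan is to control the $\ell_\infty$-norm of
\begin{equation*}
T \defn \GamHat_{S^cS}\GamHat_{SS}^{-1}\bigl(\GamHat_{SS}\betastar_S - \gamhat_S\bigr)
\end{equation*}
by introducing the population counterpart of the leading matrix and splitting into a linearized piece plus a higher-order correction. Setting $M^* \defn \Gamma_{S^cS}\Gamma_{SS}^{-1}$, $\widehat M \defn \GamHat_{S^cS}\GamHat_{SS}^{-1}$, and $\xi \defn \GamHat_{SS}\betastar_S - \gamhat_S$, observe that $\xi = (\GamHat_{SS}-\Gamma_{SS})\betastar_S - (\gamhat_S - \Gamma_{SS}\betastar_S)$, so that
\begin{equation*}
T = M^*\xi + (\widehat M - M^*)\xi.
\end{equation*}
I would bound these two summands separately, treating $\opnorm{\Gamma}_2$, $\opnorm{\Gamma^{-1}}_2$, and (standardly) a bounded $\|\betastar\|_2$ as $O(1)$ constants absorbed into the universal constants.

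For the linear piece $M^*\xi$, I would argue entrywise. The coordinate $e_j^T M^*(\GamHat_{SS}-\Gamma_{SS})\betastar_S$ takes the form $\tilde v_j^T(\GamHat - \Gamma)\tilde\betastar$, where $\tilde v_j \in \real^p$ is the zero-padding of $(M^*)^T e_j \in \real^S$ and $\tilde\betastar$ equals $\betastar$ (supported on $S$); since $\|\tilde v_j\|_2 \le \opnorm{M^*}_2 = O(1)$, applying the bilinear deviation bound~\eqref{EqnDevMaster} with $t \asymp \sqrt{(\log p)/n}$ and union-bounding over $j \in S^c$ delivers $\|M^*(\GamHat_{SS}-\Gamma_{SS})\betastar_S\|_\infty \precsim \sqrt{(\log p)/n}$ with high probability. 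In the same spirit, I would view $M^*$ as a $p\times p$ matrix $A$ (by zero-padding) of the same spectral norm and invoke the linear deviation bound~\eqref{EqnDevThree} to obtain $\|M^*(\gamhat_S - \Gamma_{SS}\betastar_S)\|_\infty \precsim \sqrt{(\log p)/n}$.

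The correction $(\widehat M - M^*)\xi$ cannot be handled by any population incoherence assumption, since none is available; I would instead dispatch it through spectral-norm perturbation. The identity
\begin{equation*}
(\widehat M - M^*)^T e_j = \GamHat_{SS}^{-1}(\GamHat_{Sj} - \Gamma_{Sj}) - \GamHat_{SS}^{-1}(\GamHat_{SS}-\Gamma_{SS})\Gamma_{SS}^{-1}\Gamma_{Sj},
\end{equation*}
combined with Cauchy--Schwarz, yields $|e_j^T(\widehat M - M^*)\xi| \le \|(\widehat M - M^*)^T e_j\|_2\,\|\xi\|_2$. Standard $\epsilon$-net arguments built on~\eqref{EqnDevMaster} over the unit sphere of $\real^S$ then produce $\opnorm{\GamHat_{SS}-\Gamma_{SS}}_2 \precsim \sqrt{k/n}$ and $\max_{j \in S^c}\|\GamHat_{Sj}-\Gamma_{Sj}\|_2 \precsim \sqrt{(k+\log p)/n}$, the second bound absorbing a union bound over the $p-k$ values of $j$. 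The first of these, via Weyl's inequality, also secures $\opnorm{\GamHat_{SS}^{-1}}_2 = O(1)$ once $n \succsim k$. Finally, reusing the entrywise argument from the previous paragraph directly on $\xi$ gives $\|\xi\|_\infty \precsim \sqrt{(\log p)/n}$, and therefore $\|\xi\|_2 \le \sqrt{k}\|\xi\|_\infty \precsim \sqrt{k\log p/n}$.

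Assembling these estimates, $|e_j^T(\widehat M - M^*)\xi| \precsim \sqrt{(k+\log p)/n}\cdot\sqrt{k\log p/n} = \sqrt{k(k+\log p)\log p}/n$ uniformly in $j \in S^c$, which is dominated by $\sqrt{(\log p)/n}$ exactly under the stated scaling $n \succsim k\cdot\max\{k,\log p\}$. The main obstacle is precisely this correction term: because no incoherence hypothesis is placed on $\Gamma_{S^cS}\Gamma_{SS}^{-1}$, all of the improvement must be extracted from $\epsilon$-net control of $\GamHat-\Gamma$, and the regime $n \succsim k^2$ is exactly what is needed to accommodate the product of two such $\sqrt{k/n}$-order fluctuations within the $\sqrt{(\log p)/n}$ budget.
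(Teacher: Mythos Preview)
Your proposal is correct and follows essentially the same route as the paper: the same decomposition $T = M^*\xi + (\widehat M - M^*)\xi$, the same use of the deviation bounds~\eqref{EqnDevMaster}--\eqref{EqnDevThree} for the linearized piece, and the same Cauchy--Schwarz/$\epsilon$-net spectral perturbation argument for the correction that forces the $n \succsim k\cdot\max\{k,\log p\}$ scaling. The only cosmetic differences are that the paper packages the bound on $\|A\xi\|_\infty$ into a single inequality (its equation~\eqref{EqnBunny}) rather than splitting $\xi$ first, and uses a three-term expansion of $\widehat M - M^*$ in place of your two-term identity; the resulting rates are identical.
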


\begin{proof}
First note that inequality~\eqref{EqnDevMaster} implies the following
deviation bounds:
\begin{subequations}
\begin{align}
\label{EqnDevOne}
\mprob\left(\opnorm{\GamHat_{SS} - \Gamma_{SS}}_2 \ge t\right) & \le c
\exp(-c' nt^2 + c'' k), \\
\label{EqnDevOneHalf}
\mprob\left(\max_{j \in S^c} \left\|\left(\GamHat_{SS^c} -
\Gamma_{SS^c}\right)e_j\right\|_2 \ge t\right) & \le c \exp(-c' nt^2 +
c'' k + c''' \log p), \\
\label{EqnDevTwo}
\mprob\left(\left\|A \left(\GamHat - \Gamma\right)v\right\|_\infty \ge
t \opnorm{A}_2 \|v\|_2\right) & \le c \exp(-c' nt^2 + c'' \log p),
\qquad \forall v \in \real^p, \quad A \in \real^{p \times p}.
\end{align}
\end{subequations}
Inequality~\eqref{EqnDevOne} follows from from a discretization
argument over the $\ell_2$-ball in $\real^S$, and
inequality~\eqref{EqnDevOneHalf} is
similar. Inequality~\eqref{EqnDevTwo} follows by taking $w = Ae_j$ and
a union bound over $1 \le j \le p$. Furthermore, by
Lemma~\ref{LemSpecInverse}, inequality~\eqref{EqnDevOne} implies that
\begin{equation}
\label{EqnDevZero}
\mprob\left(\opnorm{\GamHat_{SS}^{-1} - \Gamma_{SS}^{-1}}_2 \ge
t\right) \le c \exp(-c' nt^2 + c'' k),
\end{equation}
as well.

Note that by inequalities~\eqref{EqnDevTwo} and~\eqref{EqnDevThree}, we have
\begin{equation}
\label{EqnBunny}
\left\|A\left(\GamHat_{SS} \betastar_S -
\gamhat_S\right)\right\|_\infty \le \left\|A\left(\GamHat_{SS} -
\Gamma_{SS}\right) \betastar_S\right\|_\infty +
\left\|A\left(\gamhat_S - \Gamma_{SS}
\betastar_S\right)\right\|_\infty \precsim \opnorm{A}_2 \cdot
\sqrt{\frac{\log p}{n}},
\end{equation}
with probability at least $1 - c \log(-c' \log p)$. Furthermore, the
triangle inequality gives
\begin{multline}
\label{EqnExpanded}
\left\|\GamHat_{S^cS} \GamHat_{SS}^{-1} \left(\GamHat_{SS} \betastar_S
- \gamhat_S\right)\right\|_\infty \le \left\|\Gamma_{S^cS}
\Gamma_{SS}^{-1} \left(\GamHat_{SS} \betastar_S -
\gamhat_S\right)\right\|_\infty \\
+ \left\|\left(\GamHat_{S^cS} \GamHat_{SS}^{-1} - \Gamma_{S^cS}
\Gamma_{SS}^{-1}\right) \left(\GamHat_{SS} \betastar_S -
\gamhat_S\right) \right\|_\infty,
\end{multline}
and the first term is bounded above by $C \sqrt{\frac{\log p}{n}}$, by
inequality~\eqref{EqnBunny}. For the second term, we write
\begin{align}
\label{EqnHyena}
\left\|\left(\GamHat_{S^cS} \GamHat_{SS}^{-1} - \Gamma_{S^cS}
\Gamma_{SS}^{-1}\right) \left(\GamHat_{SS} \betastar_S -
\gamhat_S\right)\right\|_\infty & \le \max_{j \in S^c}
\left\|e_j^T\left(\GamHat_{S^cS} \GamHat_{SS}^{-1} - \Gamma_{S^cS}
\Gamma_{SS}^{-1}\right)\right\|_2 \cdot \left\|\GamHat_{SS}
\betastar_S - \gamhat_S\right\|_2 \notag \\
& \le \max_{j \in S^c} \left\|e_j^T \left(\GamHat_{S^cS}
\GamHat_{SS}^{-1} - \Gamma_{S^cS} \Gamma_{SS}^{-1}\right)\right\|_2
\cdot C\sqrt{\frac{k \log p}{n}},
\end{align}
where we have again used inequality~\eqref{EqnBunny} in the second
inequality. Furthermore,
\begin{multline}
\label{EqnCroc}
\max_{j \in S^c} \left\|e_j^T \left(\GamHat_{S^cS} \GamHat_{SS}^{-1} -
\Gamma_{S^cS} \Gamma_{SS}^{-1}\right)\right\|_2 \le \max_{j \in S^c}
\left\|e_j^T\left(\GamHat_{S^cS} -
\Gamma_{S^cS}\right)\left(\GamHat_{SS}^{-1} -
\Gamma_{SS}^{-1}\right)\right\|_2 \\
+ \max_{j \in S^c} \left\|e_j^T \Gamma_{S^cS} \left(\GamHat_{SS}^{-1}
- \Gamma_{SS}^{-1}\right)\right\|_2 + \max_{j \in S^c} \left\|e_j^T
\left(\GamHat_{S^cS} - \Gamma_{S^cS}\right)\Gamma_{SS}^{-1}\right\|_2.
\end{multline}
The terms in inequality~\eqref{EqnCroc} are bounded as
\begin{align*}
\max_{j \in S^c} \left\|e_j^T\left(\GamHat_{S^cS} -
\Gamma_{S^cS}\right)\left(\GamHat_{SS}^{-1} -
\Gamma_{SS}^{-1}\right)\right\|_2 & \le \max_{j \in S^c} \left\|e_j^T
\left(\GamHat_{S^cS} - \Gamma_{S^cS}\right)\right\|_2 \cdot
\opnorm{\GamHat_{SS}^{-1} - \Gamma_{SS}^{-1}}_2, \\
\max_{j \in S^c} \left\|e_j^T \Gamma_{S^cS} \left(\GamHat_{SS}^{-1} -
\Gamma_{SS}^{-1}\right)\right\|_2 & \le \opnorm{\Gamma_{S^cS}} \cdot
\opnorm{\GamHat_{SS}^{-1} - \Gamma_{SS}^{-1}}_2, \\
\max_{j \in S^c} \left\|e_j^T \left(\GamHat_{S^cS} -
\Gamma_{S^cS}\right)\Gamma_{SS}^{-1}\right\|_2 & \le \max_{j \in S^c}
\left\|e_j^T \left(\GamHat_{S^cS} - \Gamma_{S^cS}\right)\right\|_2
\cdot \opnorm{\Gamma_{SS}^{-1}}_2.
\end{align*}
Combining the bounds with inequalities~\eqref{EqnDevOneHalf}
and~\eqref{EqnDevZero}, we conclude from inequality~\eqref{EqnCroc}
that
\begin{equation}
\label{EqnPiggy}
\max_{j \in S^c} \left\|e_j^T \left(\GamHat_{S^cS} \GamHat_{SS}^{-1} -
\Gamma_{S^cS} \Gamma_{SS}^{-1}\right)\right\|_2 \le
\max\left\{\sqrt{\frac{k}{n}}, \sqrt{\frac{\log p}{n}}\right\},
\end{equation}
with probability at least $1 - c \exp(-c' \min\{k, \log p\})$, under
the scaling $n \succsim \max\{k, \log p\}$. Plugging
inequality~\eqref{EqnPiggy} into inequality~\eqref{EqnHyena} and
combining with inequality~\eqref{EqnBunny}, we conclude that
\begin{equation*}
\left\|\GamHat_{S^cS} \GamHat_{SS}^{-1} - \left(\GamHat_{SS}
\betastar_S - \gamhat_S\right)\right\|_\infty \precsim
\sqrt{\frac{\log p}{n}} + \max\left\{\sqrt{\frac{k^2}{n}},
\sqrt{\frac{k \log p}{n}}\right\} \cdot \sqrt{\frac{\log p}{n}},
\end{equation*}
with probability at least $1 - c \exp(-c' \min\{k, \log p\})$. The
desired result then follows under the scaling $n \succsim k \cdot
\max\{k, \log p\}$.
\end{proof}

It is easy to check that the bounds~\eqref{EqnDevMaster}
and~\eqref{EqnDevThree} hold when $(\GamHat, \gamhat)$ are defined by
equation~\eqref{EqnGamCorrupted} and $X, W$, and $\epsilon$ are
sub-Gaussian. Hence, strict dual feasibility holds by
Propositions~\ref{PropJerk} and~\ref{PropLinCase}, and the PDW
technique succeeds.

Turning to $\ell_\infty$-bounds, note that
\begin{equation*}
\left\|\left(\Qhat_{SS}\right)^{-1} \nabla
\Loss_\numobs(\betastar)_S\right\|_\infty = \left\|\GamHat_{SS}^{-1}
\left(\GamHat_{SS} \betastar_S - \gamhat_S\right)\right\|_\infty.
\end{equation*}
We have the following result:
\begin{prop*}
\label{PropLinEllInf}
Under the same conditions as Proposition~\ref{PropLinCase}, we have
\begin{equation}
\label{EqnBlackberry}
\left\|\left(\Qhat_{SS}\right)^{-1} \nabla
\Loss_\numobs(\betastar)_S\right\|_\infty \precsim \sqrt{\frac{\log p}{n}},
\end{equation}
with probability at least $1 - c \exp(-c' \min\{k, \log p\})$.
\end{prop*}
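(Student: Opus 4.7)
The plan relies on the observation that, for the quadratic loss~\eqref{EqnLossLinear}, the Hessian $\nabla^2 \Loss_\numobs(\beta) \equiv \GamHat$ is constant in $\beta$, so $\Qhat_{SS} = \GamHat_{SS}$, and using $\betastar_{S^c} = 0$ we have $\nabla \Loss_\numobs(\betastar)_S = \GamHat_{SS}\betastar_S - \gamhat_S$. Consequently, the quantity to control is simply $\|\GamHat_{SS}^{-1}(\GamHat_{SS}\betastar_S - \gamhat_S)\|_\infty$, and the strategy is to replace $\GamHat_{SS}^{-1}$ by its population analogue $\Gamma_{SS}^{-1}$ via the split
\begin{equation*}
\GamHat_{SS}^{-1}(\GamHat_{SS}\betastar_S - \gamhat_S) = \Gamma_{SS}^{-1}(\GamHat_{SS}\betastar_S - \gamhat_S) + \left(\GamHat_{SS}^{-1} - \Gamma_{SS}^{-1}\right)(\GamHat_{SS}\betastar_S - \gamhat_S),
\end{equation*}
and then to bound each piece using estimates already proved inside Proposition~\ref{PropLinCase}.

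For the first (main) term, I would apply inequality~\eqref{EqnBunny} directly with $A$ taken to be $\Gamma_{SS}^{-1}$, embedded as a $p \times p$ matrix by padding with zero blocks. This embedding preserves both $\opnorm{\cdot}_2$ and the $\ell_\infty$-norm of the output, so the estimate yields a bound of order $\opnorm{\Gamma_{SS}^{-1}}_2\sqrt{\log p / n}$, which is $\precsim \sqrt{\log p/n}$ by the standing assumption that $\opnorm{\Gamma^{-1}}_2$ is uniformly bounded. This piece already matches the claimed rate.

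For the second (perturbation) term, I would chain the elementary inequalities $\|Ax\|_\infty \le \|Ax\|_2 \le \opnorm{A}_2\,\|x\|_2$ with the two ingredients proved in the previous proposition: inequality~\eqref{EqnDevZero} (taking $t \asymp \sqrt{k/n}$) delivers $\opnorm{\GamHat_{SS}^{-1} - \Gamma_{SS}^{-1}}_2 \precsim \sqrt{k/n}$, and applying inequality~\eqref{EqnBunny} with $A = I$, combined with the dimension bound $\|\cdot\|_2 \le \sqrt{k}\,\|\cdot\|_\infty$ on $\real^S$, gives $\|\GamHat_{SS}\betastar_S - \gamhat_S\|_2 \precsim \sqrt{k\log p/n}$. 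Multiplying these two factors produces a bound of order $(k/\sqrt{n})\sqrt{\log p/n}$, which is absorbed into the target rate $\sqrt{\log p/n}$ precisely when $n \succsim k^2$.

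The main (and really only) obstacle is ensuring that the perturbation term does not dominate the main term: the product has an extra factor of $k/\sqrt{n}$ relative to the first piece, so some quadratic-in-$k$ sample-size scaling is unavoidable. Fortunately, the hypothesis $n \succsim k \cdot \max\{k, \log p\}$ already carried over from Proposition~\ref{PropLinCase} supplies exactly what is needed, so no new assumption is imposed. A union bound over the two high-probability events controlling $\opnorm{\GamHat_{SS}^{-1} - \Gamma_{SS}^{-1}}_2$ and the deviation in inequality~\eqref{EqnBunny} yields the advertised probability $1 - c\exp(-c'\min\{k,\log p\})$, completing the proof.
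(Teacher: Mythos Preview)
Your proposal is correct and essentially identical to the paper's own proof: the paper uses the same decomposition into $\Gamma_{SS}^{-1}(\GamHat_{SS}\betastar_S - \gamhat_S)$ plus $(\GamHat_{SS}^{-1}-\Gamma_{SS}^{-1})(\GamHat_{SS}\betastar_S - \gamhat_S)$, bounds the first term via~\eqref{EqnBunny} with $A=\Gamma_{SS}^{-1}$, and bounds the second via~\eqref{EqnDevZero} and~\eqref{EqnBunny}, obtaining $\sqrt{k}\cdot\sqrt{k/n}\cdot\sqrt{\log p/n} + \sqrt{\log p/n}$ and invoking $n\ge k^2$. The only cosmetic difference is that the paper extracts the factor $\sqrt{k}$ via $\opnorm{\cdot}_\infty \le \sqrt{k}\,\opnorm{\cdot}_2$ on the matrix, whereas you extract it via $\|\cdot\|_2 \le \sqrt{k}\,\|\cdot\|_\infty$ on the vector; the resulting bound is the same.
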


\begin{proof}
	
We decompose
\begin{align*}
\left\|\GamHat_{SS}^{-1} \left(\GamHat_{SS} \betastar_S -
\gamhat_S\right)\right\|_\infty & \le \left\|\left(\GamHat_{SS}^{-1} -
\Gamma_{SS}^{-1}\right)\left(\GamHat_{SS} \betastar_S -
\gamhat_S\right)\right\|_\infty + \left\|\Gamma_{SS}^{-1}
\left(\GamHat_{SS} \betastar_S - \gamhat_S\right)\right\|_\infty \\
& \le \sqrt{k} \cdot \opnorm{\GamHat_{SS}^{-1} - \Gamma_{SS}^{-1}}_2
\cdot \left\|\GamHat_{SS} \betastar_S - \gamhat_S\right\|_\infty +
\left\|\Gamma_{SS}^{-1} \left(\GamHat_{SS} \betastar_S -
\gamhat_S\right)\right\|_\infty \\
& \precsim \sqrt{k} \cdot \sqrt{\frac{k}{n}} \cdot \sqrt{\frac{\log
    p}{n}} + \sqrt{\frac{\log p}{n}},
\end{align*}
where the last inequality holds with probability at least $1 - c
\exp(-c' \min{k, \log p})$, and we have used the
bounds~\eqref{EqnDevZero} and~\eqref{EqnBunny} to establish the last
inequality. Inequality~\eqref{EqnBlackberry} then follows under the
scaling $n \ge k^2$.
	
\end{proof}

The bound for $\|\betatil - \betastar\|_\infty$ follows by plugging
inequality~\eqref{EqnBlackberry} into inequality~\eqref{EqnHoneydew}
of Theorem~\ref{ThmEllInf}.

%%%%%%%%%%%%%%

\subsection{Proof of Corollary~\ref{CorGLMLoss}}
\label{AppCorGLMLoss}

Note that Corollary 2 of our previous work~\cite{LohWai13} establishes
the RSC condition~\eqref{EqnRSC} when $\psi''$ is bounded and the
$x_i$'s are again sub-Gaussian. We again proceed via the framework and
terminology of Appendix~\ref{AppDual}, particularly
Proposition~\ref{PropUnbiased}.  Taking derivatives of the loss
function~\eqref{EqnLossGLM}, we have
\begin{equation*}
\nabla \Loss_\numobs(\beta) = \frac{1}{n} \sum_{i=1}^n \left(\psi'(x_i^T
\beta) x_i - y_i x_i\right), \qquad \text{and} \qquad \nabla^2
\Loss_\numobs(\beta) = \frac{1}{n} \sum_{i=1}^n \psi''(x_i^T \beta) x_i
x_i^T.
\end{equation*}
To verify inequality~\eqref{EqnPlum1}, we may use straightforward
sub-Gaussian concentration techniques (cf.\ the proof of Corollary 2
in Loh and Wainwright~\cite{LohWai13}). For
inequality~\eqref{EqnGrape1}, note that
\begin{align*}
\Qhat - \nabla^2 \Loss_\numobs(\betastar) & = \int_0^1 \left\{\nabla^2
\Loss_n\left(\betastar + t(\betahat - \betastar)\right) - \nabla^2
\Loss_n(\betastar)\right\} dt \\
& = 
\int_0^1 \left\{\frac{1}{n} \sum_{i=1}^n \left(\psi''(x_i^T \left(\betastar + t(\betahat - \betastar)\right) - \psi''(x_i^T
\betastar)\right) x_i x_i^T \right\} dt \\
& = \int_0^1 \left\{t \cdot \frac{1}{n} \sum_{i=1}^n \psi'''(x_i^T \betahat_t) x_i^T (\betahat -
\betastar) \cdot x_i x_i^T \right\} dt,
\end{align*}
by the mean value theorem, where $\betahat_t$ lies on the line segment
between $\betastar$ and $\betastar + t(\betahat - \betastar)$.

For each pair $v, w \in \real^p$, we write
\begin{align}
\label{EqnAntelope}
\left|v^T \left(\Qhat - \nabla^2 \Loss_\numobs(\betastar)\right)
w\right| & = \left|\int_0^1 \left\{t \cdot \frac{1}{n} \sum_{i=1}^n
\psi'''(x_i^T \betahat_t) x_i^T (\betahat - \betastar) \cdot x_i^T v
\cdot x_i^T w \right\} dt \right| \notag \\
& \le \int_0^1 t \cdot \left|\frac{1}{n} \sum_{i=1}^n
\psi'''(x_i^T \betahat_t) x_i^T (\betahat - \betastar) \cdot x_i^T v
\cdot x_i^T w \right | dt \notag \\
& \le \int_0^1 t \left\{\sqrt{\frac{1}{n} \sum_{i=1}^n
  \left|\psi'''(x_i^T \betahat_t) x_i^T (\betahat -
  \betastar)\right|^2 (x_i^T v)^2} \cdot \sqrt{\frac{1}{n}
  \sum_{i=1}^n (x_i^T w)^2} \right\} dt \notag \\
& \le \kappa_3 \sqrt{\frac{1}{n} \sum_{i=1}^n \|x_{i, S}\|_2^2
  \|\betahat - \betastar\|_2^2 (x_i^T v)^2} \cdot \sqrt{\frac{1}{n}
  \sum_{i=1}^n (x_i^T w)^2},
\end{align}
where $x_{i,S}$ denotes the vector $x_i$ restricted to
$S$. Inequality~\eqref{EqnAntelope} follows by two applications of the
Cauchy-Schwarz inequality and the fact that $\supp(\betatil) \subseteq
S$. Furthermore, by Lemma~\ref{LemEll2Bounds} in
Appendix~\ref{AppSupporting}, we have
\begin{equation*}
\|\betahat - \betastar\|_2 \precsim
\sqrt{\frac{k \log p}{n}},
\end{equation*}
with probability at least $1 - c \exp(-c' \log p)$. By
Assumption~\ref{AsGLM}(i), we also have $\|x_{i,S}\|_2^2 \le
Mk$. Hence, inequality~\eqref{EqnAntelope} becomes
\begin{equation*}
\left|v^T \left(\Qhat - \nabla^2 \Loss_\numobs(\betastar)\right)
w\right| \precsim \kappa_3 \sqrt{\frac{M k^2 \log p}{n}} \cdot
\sqrt{\frac{1}{n} \sum_{i=1}^n (x_i^T v)^2} \cdot \sqrt{\frac{1}{n}
  \sum_{i=1}^n (x_i^T w)^2}.
\end{equation*}
In particular, taking a supremum over unit vectors $v, w \in \real^S$, we have
\begin{align}
\label{EqnYin}
\opnorm{\Qhat_{SS} - \nabla^2 \Loss_\numobs(\betastar)_{SS}}_2 &
\precsim \kappa_3 \sqrt{\frac{Mk^2 \log p}{n}} \cdot
\opnorm{\left(\frac{1}{n} \sum_{i=1}^n x_i x_i^T\right)_{SS}}_2 \notag
\\
& \precsim \kappa_3 \sqrt{\frac{Mk^2 \log p}{n}} \cdot
\left(\lambda_{\max}(\Sigma_x) + C\sqrt{\frac{k}{n}}\right) \notag \\
& \precsim \sqrt{\frac{k^2 \log p}{n}},
\end{align}
with probability at least $1 - c \exp(-c' k)$, where the second
inequality holds by a standard spectral norm bound on the sample
covariance matrix.

Furthermore, for $v, w \in \real^p$, the expression $v^T \nabla^2
\Loss_\numobs(\betastar) w = \frac{1}{n} \sum_{i=1}^n \psi''(x_i^T
\betastar) v^T x_i \cdot w^T x_i$ is an i.i.d.\ average of a product
of sub-Gaussian random variables $\psi''(x_i^T \betastar) v^T x_i$ and
$w^T x_i$, since the $x_i$'s are sub-Gaussian and $\psi''$ is
uniformly bounded by assumption.  Defining \mbox{$\Loss(\betastar)
  \defn \E\left[\Loss_\numobs(\betastar)\right]$,} a standard
discretization argument of $k$-dimensional unit sphere yields
\begin{equation}
\label{EqnYang}
\opnorm{\nabla^2 \Loss_\numobs(\betastar)_{SS} - \nabla^2
  \Loss(\betastar)_{SS}}_2 \precsim \sqrt{\frac{k}{n}},
\end{equation}
with probability at least $1 - c \exp(-c' k)$. Hence, by
inequalities~\eqref{EqnYin} and~\eqref{EqnYang}, we have
$\opnorm{\Qhat_{SS} - \nabla^2 \Loss(\betastar)_{SS}}_2 \precsim
\sqrt{\frac{k^2 \log p}{n}}$.  Applying Lemma~\ref{LemSpecInverse} in
Appendix~\ref{AppSupporting} yields
\begin{equation}
\label{EqnSquash}
\opnorm{\left(\Qhat_{SS}\right)^{-1} -
  \left(\nabla^2 \Loss(\betastar)_{SS}\right)^{-1}}_2 \precsim
\sqrt{\frac{k^2 \log p}{n}},
\end{equation}
as well. A similar argument shows that
\begin{equation*}
\max_{j \in S^c} \left\|e_j^T \left(\Qhat_{S^cS} - \nabla^2
\Loss_\numobs(\betastar)_{S^cS}\right)\right\|_2 \precsim
\sqrt{\frac{k^2 \log p}{n}},
\end{equation*}
and
\begin{equation*}
\max_{j \in S^c} \left\|e_j^T \left(\nabla^2 \Loss_\numobs(\betastar)_{S^cS}
- \nabla^2 \Loss(\betastar)\right)_{S^cS}\right\|_2 \precsim
\max\left\{\sqrt{\frac{k}{n}}, \sqrt{\frac{\log p}{n}}\right\},
\end{equation*}
with probability at least $1 - c \exp(-c' \min\{k, \log p\})$.  Putting
together the pieces, we find that
\begin{equation}
\label{EqnZucchini}
\max_{j \in S^c} \left\|e_j^T \left(\Qhat_{S^cS} - \nabla^2
\Loss(\betastar)_{S^cS}\right)\right\|_2 \precsim \sqrt{\frac{k^2 \log
    p}{n}},
\end{equation}
with high probability. Returning to the expression~\eqref{EqnGrape1},
we have the bound
\begin{align*}
\left \|\Qhat_{S^cS} \left(\Qhat_{SS}\right)^{-1} \nabla
\Loss_\numobs(\betastar)_S\right\|_\infty \leq \Term_1 + \Term_2
\end{align*}
where $\Term_1 \defn \left\|\nabla^2 \Loss(\betastar)_{S^cS}
\left(\nabla^2 \Loss(\betastar)_{SS}\right)^{-1} \nabla
\Loss(\betastar)_S\right\|_\infty$, and
\begin{align}
\label{EqnApricot}
\Term_2 & \defn \left\|\left\{\Qhat_{S^cS}
\left(\Qhat_{SS}\right)^{-1} - \nabla \Loss(\betastar)_{S^cS}
\left(\nabla^2 \Loss(\betastar)_{SS}\right)^{-1}\right\} \nabla
\Loss_\numobs(\betastar)_S\right\|_\infty.
\end{align}
Since the eigenspectrum of $\nabla^2 \Loss(\betastar)$ is bounded by
assumption, standard techniques (cf.\ the proofs of Corollary 2 in Loh
and Wainwright~\cite{LohWai13} and Lemma 6 in Negahban et
al.~\cite{NegRavWaiYu12}) guarantee that $\Term_1 \leq c
\sqrt{\frac{\log \pdim}{\numobs}}$, with probability at least $1-c_1
\exp(-c_2 \log \pdim)$.  Turning to the second term, we have
\begin{equation}
\label{EqnOneStar}
\Term_2 \le \max_{j \in S^c} \left\|e_j^T \left\{\Qhat_{S^cS}
\left(\Qhat_{SS}\right)^{-1} - \nabla \Loss(\betastar)_{S^cS}
\left(\nabla^2 \Loss(\betastar)_{SS}\right)^{-1}\right\} \right\|_2
\cdot \left\|\nabla \Loss_\numobs(\betastar)_S\right\|_2.
\end{equation}
Now,
\begin{equation}
\label{EqnTwoStar}
\|\nabla \Loss_\numobs(\betastar)_S\|_2 \le \sqrt{k} \cdot \|\nabla
\Loss_\numobs(\betastar)\|_\infty \precsim \sqrt{k} \cdot
\sqrt{\frac{\log p}{n}},
\end{equation}
and by the triangle inequality,
\begin{multline}
\label{EqnThreeStar}
\left\|e_j^T \left\{\Qhat_{S^cS} \left(\Qhat_{SS}\right)^{-1} - \nabla
\Loss(\betastar)_{S^cS} \left(\nabla^2
\Loss(\betastar)_{SS}\right)^{-1}\right\} \right\|_2 \\
\le \left\|e_j^T \nabla^2 \Loss(\betastar)_{S^cS} \Delta_1\right\|_2 +
\left\|e_j^T \Delta_2 \left(\nabla^2
\Loss(\betastar)_{SS}\right)^{-1}\right\|_2 + \left\|e_j^T \Delta_2
\Delta_1\right\|_2,
\end{multline}
where
\begin{align*}
\Delta_1 \defn \left(\Qhat_{SS}\right)^{-1} -
\left(\nabla^2 \Loss(\betastar)_{SS}\right)^{-1}, \qquad \text{and}
\qquad \Delta_2 \defn \Qhat_{S^cS} - \nabla^2
\Loss(\betastar)_{S^cS}.
\end{align*}
By the bounds~\eqref{EqnSquash} and~\eqref{EqnZucchini}, we have
\begin{equation*}
\opnorm{\Delta_1}_2, \; \max_{j \in S^c} \left\|e_j^T
\Delta_2\right\|_2 \precsim \sqrt{\frac{k^2 \log p}{n}},
\end{equation*}
w.h.p., so combined with inequalities~\eqref{EqnOneStar},
\eqref{EqnTwoStar}, and~\eqref{EqnThreeStar}, we have
\begin{equation*}
R \precsim \sqrt{k} \cdot \sqrt{\frac{\log p}{n}} \cdot
\sqrt{\frac{k^2 \log p}{n}}.
\end{equation*}
Hence, by Proposition~\ref{PropUnbiased}, strict dual feasibility
holds under the scaling $n \succsim k^3 \log p$.

Turning to $\ell_\infty$-bounds, we have
\begin{align*}
\left\|\left(\Qhat_{SS}\right)^{-1} \nabla
\Loss_\numobs(\betastar)_S\right\|_\infty & =
\left\|\left\{\left(\Qhat_{SS}\right)^{-1} - \left(\nabla^2
\Loss(\betastar)_{SS}\right)^{-1}\right\} \nabla
\Loss_\numobs(\betastar)_S\right\|_\infty \\
& \qquad + \left\|\left(\nabla^2 \Loss(\betastar)_{SS}\right)^{-1}
\nabla \Loss_\numobs(\betastar)_S\right\|_\infty \\
& \le \sqrt{k} \cdot \opnorm{\left(\Qhat_{SS}\right)^{-1} - \left(\nabla^2
  \Loss(\betastar)_{SS}\right)^{-1}}_2 \cdot \left\|\nabla
\Loss_\numobs(\betastar)_S\right\|_\infty \\
& \qquad + \left\|\left(\nabla^2 \Loss(\betastar)_{SS}\right)^{-1}
\nabla \Loss_\numobs(\betastar)_S\right\|_\infty \\
& \precsim \sqrt{k} \cdot \sqrt{\frac{k^2 \log p}{n}} \cdot
\sqrt{\frac{\log p}{n}} + \sqrt{\frac{\log p}{n}},
\end{align*}
using inequality~\eqref{EqnSquash} and the same sub-Gaussian
concentration result used to bound the term $T_1$ above. Hence,
\begin{equation*}
\left\|\left(\Qhat_{SS}\right)^{-1} \nabla
\Loss_n(\betastar)_S\right\|_\infty \precsim \sqrt{\frac{\log p}{n}},
\end{equation*}
under the scaling $n \precsim k^3 \log p$. Finally, note that inequality~\eqref{EqnSquash} implies
\begin{equation*}
\opnorm{\left(\Qhat_{SS}\right)^{-1} -
  \left(Q^*_{SS}\right)^{-1}}_\infty \le \sqrt{k}
\opnorm{\left(\Qhat_{SS}\right)^{-1} - \left(\nabla^2
  \Loss(\betastar)_{SS}\right)^{-1}}_2 \precsim \sqrt{\frac{k^3 \log
    p}{n}} \le c_\infty,
\end{equation*}
so
\begin{equation*}
\opnorm{\left(\Qhat_{SS}\right)^{-1}}_\infty \le
\opnorm{\left(\Qhat_{SS}\right)^{-1} -
  \left(Q^*_{SS}\right)^{-1}}_\infty +
\opnorm{\left(Q^*_{SS}\right)^{-1}}_\infty \le 2c_\infty.
\end{equation*}
Theorem~\ref{ThmEllInf} then implies the desired result.

%%%%%%%%%%%%%%%%%%%%%%%%%%%%%%%%%%%%%%%%%%%%%%%%%%%%%%%%%%%%%%%%%%

\subsection{Proof of Corollary~\ref{CorGlassoLoss}}
\label{AppCorGlassoLoss}

In this section, we provide the proof of
Corollary~\ref{CorGlassoLoss}. Corollary 3 of our previous
work~\cite{LohWai13} establishes the RSC condition~\eqref{EqnRSC}, with
$\alpha_1$ and $\alpha_2$ scaling as $\opnorm{\Thetastar}_2$ and
$\tau_1 = 0$. Our proof strategy deviates mildly from the framework
described in Section~\ref{SecPDW}, in that we provide a slightly
different way of constructing a matrix $\Thetahat_S$ such that
$\supp(\Thetahat_S) \subseteq S$ and $\Thetahat_S$ is a
zero-subgradient point of the restricted
program~\eqref{EqnRestrictedM}. However, subject to this minor
adjustment, the remainder of the primal-dual witness technique
proceeds as before.

We begin with a simple lemma establishing the convexity of the
program~\eqref{EqnGlassoLoss}:
\begin{lem*}
\label{LemGlassoConvex}
Suppose $\rho_\lambda$ is $\mu$-amenable. Then for the choice of
parameter $\kappa = \sqrt{\frac{2}{\mu}}$, the objective function in
the program~\eqref{EqnGlassoLoss} is strictly convex over the
constraint set.
\end{lem*}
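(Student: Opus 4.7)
The approach is to verify strict convexity directly from a Hessian calculation on the (convex) constraint set. The objective decomposes into three pieces with different convexity properties: the linear term $\tr(\Sigmahat \Theta)$ contributes nothing to the Hessian; the log-determinant barrier $-\log\det(\Theta)$ is strictly convex on $S_{++}^p$; and the separable regularizer $\sum_{j\neq k}\rho_\lambda(\Theta_{jk})$ is $\mu$-weakly convex by amenability. The strategy is to show that the strictly convex $-\log\det$ contribution strictly dominates the weakly concave part of the regularizer throughout the region $\opnorm{\Theta}_2 \leq \kappa$.

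For the $-\log\det$ piece, I would use the standard identity that its Hessian acts on a symmetric direction $\Delta$ as $\tr(\Theta^{-1}\Delta\,\Theta^{-1}\Delta)$. Diagonalizing $\Theta^{-1} = U D U^T$ and writing $\widetilde{\Delta} = U^T \Delta U$, this quantity equals $\sum_{i,j} D_{ii} D_{jj} \widetilde{\Delta}_{ij}^2$, which is bounded below by $\lambda_{\min}(\Theta^{-1})^2 \opnorm{\Delta}_F^2 = \opnorm{\Delta}_F^2 / \opnorm{\Theta}_2^2 \geq \opnorm{\Delta}_F^2 / \kappa^2$ uniformly over the constraint set. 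For the regularizer, property (v) of $\mu$-amenability gives $\rho_\lambda''(t) \geq -\mu$ at every point of twice-differentiability, so its directional Hessian is bounded below by $-\mu \sum_{j \neq k}\Delta_{jk}^2 \geq -\mu \opnorm{\Delta}_F^2$. Combining the two contributions yields a directional Hessian lower bound of the form $(1/\kappa^2 - \mu)\opnorm{\Delta}_F^2$ (possibly with tighter constants once the symmetric parameterization and the restriction of the regularizer to off-diagonal entries are accounted for), and the choice $\kappa = \sqrt{2/\mu}$ is calibrated precisely so that this bound is strictly positive for every nonzero symmetric $\Delta$.

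The main obstacle I expect is the nondifferentiability of $\rho_\lambda$ at $0$, which blocks a direct Hessian argument whenever some entry $\Theta_{jk}$ vanishes. I would circumvent this using the decomposition $\rho_\lambda(t) = \bigl[\rho_\lambda(t) + \tfrac{\mu}{2} t^2\bigr] - \tfrac{\mu}{2} t^2$, in which the bracketed term is convex (by $\mu$-amenability) and the subtracted quadratic is smooth. Strict convexity of the full objective then reduces to the sum of (i) convexity of the bracketed regularizer, which requires no differentiability, and (ii) strict convexity of the \emph{smooth} function $\tr(\Sigmahat \Theta) - \log\det(\Theta) - \tfrac{\mu}{2}\sum_{j\neq k}\Theta_{jk}^2$, whose Hessian lower bound follows from exactly the calculation above and is strictly positive on the constraint set. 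Finally, I would record that the feasible region is itself convex (intersection of $S_{++}^p$ with the spectral-norm ball), so the argument applies over a genuinely convex domain and yields uniqueness of the minimizer as a corollary.
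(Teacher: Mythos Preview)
Your approach is essentially the paper's: both bound the Hessian of $-\log\det(\Theta)$ from below via the spectral constraint, then absorb the weak concavity of the regularizer using the $\mu$-amenability decomposition $\rho_\lambda(t)=[\rho_\lambda(t)+\tfrac{\mu}{2}t^2]-\tfrac{\mu}{2}t^2$. The paper phrases the Hessian bound as $\lambda_{\min}\bigl((\Theta\otimes\Theta)^{-1}\bigr)=\lambda_{\max}(\Theta)^{-2}$, while you compute the equivalent directional quantity $\tr(\Theta^{-1}\Delta\,\Theta^{-1}\Delta)\ge \opnorm{\Delta}_F^2/\opnorm{\Theta}_2^2$; these are the same calculation.

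There is, however, a genuine arithmetic slip in your final claim. With $\kappa=\sqrt{2/\mu}$ you have $1/\kappa^2=\mu/2$, so your combined lower bound $(1/\kappa^2-\mu)\opnorm{\Delta}_F^2$ equals $-\tfrac{\mu}{2}\opnorm{\Delta}_F^2<0$, not something strictly positive; the same holds for your third-paragraph decomposition, since the second derivative of $-\tfrac{\mu}{2}\sum_{j\neq k}\Theta_{jk}^2$ contributes $-\mu\sum_{j\neq k}\Delta_{jk}^2$, and for an off-diagonal direction $\Delta$ this again overwhelms the $\mu/2$ curvature from the log-det term. The calibration that actually makes your (and the paper's) argument go through is $\kappa\le 1/\sqrt{\mu}$, giving $\lambda_{\min}(\nabla^2\Loss_n)\ge\mu$ and hence $\Loss_n(\Theta)-\tfrac{\mu}{2}\opnorm{\Theta}_F^2$ convex. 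Indeed the paper's own proof text writes $\sqrt{1/\mu}$ rather than the $\sqrt{2/\mu}$ appearing in the lemma statement, so this appears to be a typo in the paper; your hedging about ``tighter constants'' does not rescue the bound as stated, but correcting $\kappa$ to $1/\sqrt{\mu}$ does.
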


\begin{proof}
A simple calculation shows that for the graphical Lasso loss
\begin{equation*}
\Loss_\numobs(\Theta) = \tr(\Sigmahat \Theta) - \log \det(\Theta),
\end{equation*}
we have $\nabla^2 \Loss_\numobs(\Theta) = (\Theta \otimes
\Theta)^{-1}$. Note that $\nabla^2 \Loss_\numobs(\Theta)$ is a deterministic
quantity that does not depend on $\{x_i\}_{i=1}^n$. Hence,
\begin{equation*}
\lambda_{\min}\left(\nabla^2 \Loss_\numobs(\Theta)\right) =
\lambda_{\max}^{-1}\left(\Theta \otimes \Theta\right) =
\lambda_{\max}^{-2} (\Theta).
\end{equation*}
We see that for $\opnorm{\Theta}_2 \le \sqrt{\frac{1}{\mu}}$, we have
$\lambda_{\min}\left(\nabla^2 \Loss_\numobs(\Theta)\right) \ge
\frac{\mu}{2}$, so $\Loss_\numobs(\Theta) - \frac{\mu}{2}
\opnorm{\Theta}_F^2$ is convex. Further note that by assumption, the
quantity $\sum_{j \neq k} \rho_\lambda(\Theta_{jk}) + \frac{\mu}{2}
\opnorm{\Theta}_F^2$ is also convex. Hence, the overall objective
function is strictly convex over the feasible set, as claimed.
\end{proof}

In particular, if there exists a zero-subgradient point of the
composite objective function $\Loss_\numobs(\Theta) +
\rho_\lambda(\Theta)$ within the feasible set, it must be the unique
global minimum.  Our strategy is to construct such a zero-subgradient
point.  Let $S \defn \{(j,k) \, \mid \, j \le k, \Thetastar_{jk} \neq
0\}$ denote the support of $\Thetastar$, where we remove redundant
elements. We define the map $F: \real^{|S|} \rightarrow \real^{|S|}$
according to
\begin{equation*}
F(\VEC(\Delta_S)) \defn -\left(\Gamstar_{SS}\right)^{-1}
\left(\VEC\left(\Sigmahat_S - \left(\left(\Thetastar +
\Delta\right)^{-1}\right)_S\right)\right) + \VEC\left(\Delta_S\right),
\end{equation*}
where $\Delta \in \real^{p \times p}$ is the symmetric matrix agreeing
with $\Delta_S$ on $S$ and having 0's elsewhere, and $\Gamstar \defn
\Theta^{*-1} \otimes \Theta^{*-1} = \Sigmastar \otimes \Sigmastar$. We
analyze the behavior of $F$ over the $\ell_\infty$-ball
$\ball_\infty(r)$ of radius $r$ to be specified later. In particular,
note that for $\Delta_S \in \ball_\infty(r)$, we have
\begin{equation}
\label{EqnDeltaBd}
\opnorm{\Delta}_2 \le \opnorm{\Delta}_\infty \le dr,
\end{equation}
since $\Delta$ has at most $d$ nonzero entries per row. Hence, when
$dr < \lambda_{\min}(\Thetastar)$, we are guaranteed that the matrix
$(\Thetastar + \Delta)$ is invertible, making $F$ a continuous map. We
show that $F(\ball_\infty(r)) \subseteq \ball_\infty(r)$, so by
Brouwer's fixed point theorem~\cite{OrtRhe00}, the function must have
a fixed point, which we denote by $\Deltastar_S \in \ball_\infty(r)$.
Defining the constants $\kappa_\Gamma \defn
\opnorm{\left(\Gamstar_{SS}\right)^{-1}}_\infty$ and $\kappa_\Sigma
\defn \opnorm{\Sigmastar}_\infty$, this insight is summarized in the
following lemma:
\begin{lem*}
\label{LemBrouwer}
Let $r \defn 2c_0 \kappa_\Gamma \sqrt{\frac{\log p}{n}}$, where $c_0$
is a constant depending only on the sub-Gaussian parameter of the
$x_i$'s. Suppose
\begin{equation}
\label{EqnTwoSmiley}
d r \le \min\left\{\frac{1}{2} \lambda_{\min}(\Thetastar), \;
\frac{1}{2 \kappa_\Sigma}, \; \frac{1}{4 \kappa_\Gamma
  \kappa_\Sigma^3}\right\},
\end{equation}
and suppose the sample size satisfies $n \succsim \kappa_\Gamma^2 \log
p$. Then with probability at least $1 - c \exp(-c' \log p)$, there
exists $\Thetahat \in \real^{p \times p}$ such that
\begin{equation}
\label{EqnDeltaBd2}
\|\Thetahat - \Thetastar\|_{\max} \le r, \qquad \opnorm{\Thetahat -
  \Thetastar}_2 \le dr, \quad \mbox{and} \quad \Sigmahat_S -
\left(\Thetahat^{-1}\right)_S = 0.
\end{equation}
Furthermore, if $\rho_\lambda$ is $(\mu, \gamma)$-amenable and
$\min_{(j,k) \in S} |\Thetastar_{jk}| \ge \gamma\lambda + r$, we have
$\rho'_\lambda(\Thetahat_{jk}) = 0$, for all $(j,k) \in S$.
\end{lem*}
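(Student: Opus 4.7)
The plan is to prove the lemma by invoking Brouwer's fixed point theorem on the continuous self-map $F$ restricted to the ball $\ball_\infty(r)$, and then translate the resulting fixed point back into the required properties of $\Thetahat$. Observe first that under the bound $dr \le \tfrac{1}{2}\lambda_{\min}(\Thetastar)$ in \eqref{EqnTwoSmiley}, inequality~\eqref{EqnDeltaBd} gives $\opnorm{\Delta}_2 \le dr < \lambda_{\min}(\Thetastar)$ for every $\Delta_S \in \ball_\infty(r)$, so $\Thetastar + \Delta$ is strictly positive definite and $F$ is indeed continuous on $\ball_\infty(r)$.

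The key algebraic step is to unpack $F$ via the second-order expansion
\begin{equation*}
(\Thetastar + \Delta)^{-1} \; = \; \Sigmastar \; - \; \Sigmastar \Delta \Sigmastar \; + \; J(\Delta), \qquad J(\Delta) \defn \Sigmastar \Delta \Sigmastar \Delta (\Thetastar + \Delta)^{-1}.
\end{equation*}
Combining this with the Kronecker identity $\Gamstar \VEC(\Delta) = \VEC(\Sigmastar \Delta \Sigmastar)$ and the support restriction $\supp(\Delta) \subseteq S$, the linear pieces cancel and I obtain
\begin{equation*}
F(\VEC(\Delta_S)) \; = \; -\bigl(\Gamstar_{SS}\bigr)^{-1} \VEC\bigl((\Sigmahat - \Sigmastar)_S\bigr) \; + \; \bigl(\Gamstar_{SS}\bigr)^{-1} \VEC\bigl(J(\Delta)_S\bigr).
\end{equation*}
Taking $\|\cdot\|_\infty$, the goal reduces to bounding each summand by $r/2$. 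For the first, standard sub-Gaussian concentration for the sample covariance yields $\|\Sigmahat - \Sigmastar\|_{\max} \le c_0 \sqrt{\log p / n}$ with probability at least $1 - c\exp(-c'\log p)$ under $n \succsim \kappa_\Gamma^2 \log p$, so the choice $r = 2c_0 \kappa_\Gamma \sqrt{\log p / n}$ forces $\kappa_\Gamma \|\Sigmahat - \Sigmastar\|_{\max} \le r/2$.

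For the remainder, I use $\|M\|_{\max} \le \opnorm{M}_\infty$ together with the submultiplicativity of $\opnorm{\cdot}_\infty$ to write
\begin{equation*}
\|J(\Delta)\|_{\max} \; \le \; \opnorm{\Sigmastar}_\infty^2 \, \opnorm{\Delta}_\infty^2 \, \opnorm{(\Thetastar + \Delta)^{-1}}_\infty \; \le \; \kappa_\Sigma^2 (dr)^2 \cdot \opnorm{(\Thetastar + \Delta)^{-1}}_\infty,
\end{equation*}
where the last inequality uses row-sparsity of $\Delta$ to get $\opnorm{\Delta}_\infty \le d \|\Delta\|_{\max} \le dr$. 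To control the final operator norm, I factor $(\Thetastar + \Delta)^{-1} = (I + \Sigmastar \Delta)^{-1} \Sigmastar$ and, since $\opnorm{\Sigmastar \Delta}_\infty \le \kappa_\Sigma \cdot dr \le 1/2$ by the second clause of \eqref{EqnTwoSmiley}, a Neumann series gives $\opnorm{(I + \Sigmastar \Delta)^{-1}}_\infty \le 2$, hence $\opnorm{(\Thetastar + \Delta)^{-1}}_\infty \le 2\kappa_\Sigma$. Plugging in yields $\|J(\Delta)\|_{\max} \le 2\kappa_\Sigma^3 (dr)^2$, and the third clause of \eqref{EqnTwoSmiley} ensures $\kappa_\Gamma \|J(\Delta)\|_{\max} \le r/2$. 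Summing the two pieces shows $F(\ball_\infty(r)) \subseteq \ball_\infty(r)$.

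Brouwer's theorem then supplies a fixed point $\Deltastar_S \in \ball_\infty(r)$; defining $\Thetahat \defn \Thetastar + \Deltastar$ with $\Deltastar$ the symmetric extension of $\Deltastar_S$ by zeros on $S^c$, the fixed-point equation rearranges to $\Sigmahat_S - (\Thetahat^{-1})_S = 0$. The elementwise bound $\|\Thetahat - \Thetastar\|_{\max} \le r$ is immediate, and $\opnorm{\Thetahat - \Thetastar}_2 \le dr$ follows from the row-sparsity bound already used in \eqref{EqnDeltaBd}. Finally, for the additional claim under $(\mu,\gamma)$-amenability, the beta-min hypothesis $\min_{(j,k)\in S} |\Thetastar_{jk}| \ge \gamma\lambda + r$ combined with $\|\Deltastar\|_{\max} \le r$ yields $|\Thetahat_{jk}| \ge \gamma\lambda$ on $S$, so the unbiasedness property (vii) gives $\rho'_\lambda(\Thetahat_{jk}) = 0$. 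The main technical obstacle is the $\opnorm{\cdot}_\infty$-control of $(\Thetastar + \Delta)^{-1}$: unlike spectral bounds, this does not follow from the eigenvalue condition alone and requires the separate Neumann argument calibrated to $dr \le 1/(2\kappa_\Sigma)$, which is precisely why the second clause in \eqref{EqnTwoSmiley} appears.
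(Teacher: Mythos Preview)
Your overall strategy matches the paper's: rewrite $F$ so that the linear part cancels, split into a sampling-error term and a second-order remainder, bound each by $r/2$, and invoke Brouwer. The algebra in your decomposition is correct, and your treatment of the first term and of the final $(\mu,\gamma)$-amenability claim is fine.

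The gap is in your bound on the remainder $J(\Delta)$. You pass from $\|J(\Delta)\|_{\max}$ to $\opnorm{J(\Delta)}_\infty$ and then use submultiplicativity to get
\[
\|J(\Delta)\|_{\max} \;\le\; \kappa_\Sigma^2\,\opnorm{\Delta}_\infty^2 \cdot 2\kappa_\Sigma \;\le\; 2\kappa_\Sigma^3 (dr)^2.
\]
But to conclude $\kappa_\Gamma \|J(\Delta)\|_{\max} \le r/2$ from this you would need $4\kappa_\Gamma\kappa_\Sigma^3 d^2 r \le 1$, i.e., $d^2 r \le 1/(4\kappa_\Gamma\kappa_\Sigma^3)$, whereas the third clause of \eqref{EqnTwoSmiley} only gives $dr \le 1/(4\kappa_\Gamma\kappa_\Sigma^3)$. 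So the step ``the third clause of \eqref{EqnTwoSmiley} ensures $\kappa_\Gamma\|J(\Delta)\|_{\max}\le r/2$'' does not follow; your bound is off by a factor of $d$.

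The fix is to avoid the crude inequality $\|J(\Delta)\|_{\max}\le \opnorm{J(\Delta)}_\infty$ and instead peel off one copy of $\Delta$ at the $\|\cdot\|_{\max}$ level. Concretely, using H\"older's inequality entrywise,
\[
|e_j^T \Sigmastar\Delta\Sigmastar\Delta(\Thetastar+\Delta)^{-1} e_k|
\;\le\; \|e_j^T \Sigmastar\Delta\Sigmastar\|_1 \cdot \|\Delta(\Thetastar+\Delta)^{-1} e_k\|_\infty,
\]
and then $\|\Delta v\|_\infty \le \|\Delta\|_{\max}\|v\|_1$ together with your Neumann bound $\opnorm{(\Thetastar+\Delta)^{-1}}_\infty \le 2\kappa_\Sigma$ gives $\|J(\Delta)\|_{\max} \le 2\kappa_\Sigma^3\, d r^2$, with one factor $r$ rather than $dr$. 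This is exactly the bound the paper obtains (via the equivalent full Neumann series), and now the third clause of \eqref{EqnTwoSmiley} is precisely what is needed.
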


\begin{proof}
We first establish that $F(\ball_\infty(r)) \subseteq
\ball_\infty(r)$. Consider $\Delta_S \in \ball_\infty(r)$. We have
\begin{equation*}
F(\VEC(\Delta_S)) = - \left(\Gamstar_{SS}\right)^{-1}
\left\{\VEC\left(\Sigmahat_S - \Sigmastar_S\right) +
\VEC\left(\left(\Sigmastar - (\Thetastar +
\Delta)^{-1}\right)_S\right) - \Gamstar_{SS} \VEC(\Delta_S)\right\},
\end{equation*}
implying that
\begin{align}
\label{EqnSpruce}
\|F(\VEC(\Delta_S))\|_\infty & \le \kappa_\Gamma
\left\|\VEC\left(\Sigmahat_S - \Sigmastar_S\right) +
\VEC\left(\left(\Sigmastar - (\Thetastar +
\Delta)^{-1}\right)_S\right) - \Gamstar_{SS}
\VEC(\Delta_S)\right\|_\infty \notag \\
& \le \kappa_\Gamma \left\|\VEC\left(\Sigmahat_S -
\Sigmastar_S\right)\right\|_\infty + \kappa_\Gamma
\left\|\VEC\left(\left(\Sigmastar - (\Thetastar +
\Delta)^{-1}\right)_S\right) - \Gamstar_{SS}
\VEC(\Delta_S)\right\|_\infty.
\end{align}
For the first term, we have
\begin{equation}
\label{EqnVine}
\kappa_\Gamma \left\|\VEC\left(\Sigmahat_S -
\Sigmastar_S\right)\right\|_\infty \le c_0 \kappa_\Gamma
\sqrt{\frac{\log p}{n}} \defn \frac{r}{2},
\end{equation}
w.h.p., by the sub-Gaussian assumption on the $x_i$'s. Furthermore, by
the matrix expansion
\begin{equation}
\label{EqnMatInvExpand}
(A + \Delta)^{-1} - A^{-1} = \sum_{\ell=1}^\infty \left(-A^{-1}
\Delta\right)^\ell A^{-1},
\end{equation}
we have
\begin{align*}
\VEC\left(\left(\Sigmastar - (\Thetastar +
\Delta)^{-1}\right)_S\right) - \Gamstar_{SS} \VEC(\Delta_S) & =
\VEC\left(\left(\Sigmastar - (\Thetastar + \Delta)^{-1}\right)_S -
\left(\Sigmastar \Delta \Sigmastar\right)_S\right) \\
& = - \VEC\left(\left(\sum_{\ell=2}^\infty \left(- \Sigmastar
\Delta\right)^{\ell} \Sigmastar\right)_S\right).
\end{align*}
By the triangle inequality, we then have
\begin{equation}
\label{EqnCedar}
\left\|\VEC\left(\left(\Sigmastar - (\Thetastar +
\Delta)^{-1}\right)_S\right) - \Gamstar_{SS}
\VEC(\Delta_S)\right\|_\infty \le \max_{(j,k) \in S}
\sum_{\ell=2}^\infty \left|e_j^T (\Sigmastar \Delta)^{\ell} \Sigmastar
e_k\right|.
\end{equation}
By H\"{o}lder's inequality, we have
\begin{align*}
\left|e_j^T (\Sigmastar \Delta)^{\ell} \Sigmastar e_k\right| \; \le \;
\left\|e_j^T (\Sigmastar \Delta)^{\ell-1} \Sigmastar\right\|_1 \cdot
\left\|\Delta \Sigmastar e_k\right\|_\infty \; & \le
\opnorm{\Sigmastar (\Delta \Sigmastar)^{\ell-1}}_1 \cdot
\|\Delta\|_{\max} \cdot \|\Sigmastar e_k\|_1 \\
& \le \opnorm{\Sigmastar}_1^\ell \opnorm{\Delta}_1^{\ell-1} \cdot
\|\Delta\|_{\max} \cdot \opnorm{\Sigmastar}_1 \\
& = \opnorm{\Sigmastar}_\infty^{\ell+1}
\opnorm{\Delta}_\infty^{\ell-1} \cdot \|\Delta\|_{\max}.
\end{align*}
Using inequality~\eqref{EqnDeltaBd} and plugging back into
inequality~\eqref{EqnCedar}, we then have
\begin{align}
\label{EqnRose}
\left\|\VEC\left(\left(\Sigmastar - (\Thetastar +
\Delta)^{-1}\right)_S\right) - \Gamstar_{SS}
\VEC(\Delta_S)\right\|_\infty \le \sum_{\ell=2}^\infty
\kappa_{\Sigma}^{\ell+1} d^{\ell-1} r^\ell = \frac{\kappa_\Sigma^3
  dr^2}{1-\kappa_\Sigma dr} \; \le 2 \kappa_\Sigma^3 dr^2,
\end{align}
where the last inequality follows from our
assumption~\eqref{EqnTwoSmiley}. Combining
inequalities~\eqref{EqnSpruce}, \eqref{EqnVine}, and~\eqref{EqnRose},
and using the assumption that $2 \kappa_\Gamma \kappa_\Sigma^3 dr^2
\le \frac{r}{2}$, we find that $\|F(\VEC(\Delta_S))\|_\infty \le r$,
as desired. Applying Brouwer's fixed point theorem then yields the
required fixed point $\Deltastar_S$. Defining $\Thetahat \defn
\Thetastar + \Deltastar$, the third equality in
line~\eqref{EqnDeltaBd2} follows.  The operator norm bound in
inequality~\eqref{EqnDeltaBd2} follows from the same argument as in
inequality~\eqref{EqnDeltaBd}. Finally, by the triangle inequality and
the assumption that $\rho_\lambda$ is $(\mu, \gamma)$-amenable, we
have $\rho'_\lambda(\Thetahat_{jk}) = \rho'_\lambda(\Thetastar_{jk}) =
0$.
\end{proof}

Equipped with Lemma~\ref{LemBrouwer}, we now prove that $\Thetahat$ is
the (unique) global optimum of the program~\eqref{EqnGlassoLoss}. Note
that it suffices to show that $\Thetahat$ is a zero-subgradient point
of the objective function in the program~\eqref{EqnGlassoLoss}; since
the objective is strictly convex by Lemma~\ref{LemGlassoConvex},
$\Thetahat$ must be the global minimum. The same manipulations used in
Appendix~\ref{AppDual} and Proposition~\ref{PropUnbiased} reveal that
$\Thetahat$ is a zero-subgradient point satisfying dual feasibility,
if inequalities~\eqref{EqnPlum1} and~\eqref{EqnGrape1} hold. In fact,
we may take $\delta = 0$, since the convexity of the problem
immediately implies uniqueness of a stationary point if it exists. We
treat $\nabla^2 \Loss_\numobs(\Theta)$ as a $p^2 \times p^2$ matrix
operating on the $p$-dimensional vector $\VEC(\Theta)$. For the
graphical Lasso, we have $\nabla \Loss_\numobs(\Thetastar) = \Sigmahat
- \Sigmastar$, so we need to show that $\left\|\Sigmahat -
\Sigmastar\right\|_{\max} \le \frac{\lambda}{2}$.  It is
straightforward to see that by the sub-Gaussianity of the $x_i$'s,
setting $\lambda = c \sqrt{\frac{\log p}{n}}$ for a suitably large
constant is sufficient to satisfy inequality~\eqref{EqnPlum1}.

Turning to inequality~\eqref{EqnGrape1}, note that
\begin{align*}
& \opnorm{\Qhat - \nabla^2 \Loss_n(\Thetastar)}_\infty \\
& \qquad \qquad = \opnorm{\int_0^1 \left\{\left(\Thetastar +
    t(\Thetahat - \Thetastar)\right)^{-1} \otimes \left(\Thetastar +
    t(\Thetahat - \Thetastar)\right)^{-1} - \Theta^{*-1} \otimes
    \Theta^{*-1}\right\} dt}_\infty \\
& \qquad \qquad \le \int_0^1 \opnorm{\left(\Thetastar + t(\Thetahat -
    \Thetastar)\right)^{-1} \otimes \left(\Thetastar + t(\Thetahat -
    \Thetastar)\right)^{-1} - \Theta^{*-1} \otimes
    \Theta^{*-1}}_\infty dt.
\end{align*}
Furthermore, for $t \in [0,1]$, we have
\begin{equation*}
\opnorm{\left(\Thetastar + t(\Thetahat - \Thetastar)\right) -
  \Thetastar}_\infty = t \cdot \opnorm{\Thetahat - \Thetastar}_\infty
\le d \left\|\Thetahat - \Thetastar\right\|_{\max} \precsim d
\sqrt{\frac{\log p}{n}},
\end{equation*}
using Lemma~\ref{LemBrouwer}. Lemma~\ref{LemSpecInverse} in
Appendix~\ref{AppSupporting} then implies that
\begin{equation*}
\opnorm{\left(\Thetastar + t(\Thetahat - \Thetastar)\right)^{-1} -
  \Theta^{*-1}}_\infty \precsim d \sqrt{\frac{\log p}{n}}.
\end{equation*}
Hence, by Lemma~\ref{LemKronProd} in Appendix~\ref{AppSupporting}, we have
\begin{align*}
\opnorm{\Qhat - \nabla^2 \Loss_\numobs(\Thetastar)}_\infty \precsim \;
d \sqrt{\frac{\log p}{n}},
\end{align*}
implying the bounds
\begin{equation*}
\delta_1 \defn \opnorm{\Qhat_{S^cS} - \left(\nabla^2
  \Loss_\numobs(\Thetastar)\right)_{S^cS}}_\infty \precsim d
\sqrt{\frac{\log p}{n}},
\end{equation*}
and
\begin{equation*}
\opnorm{\Qhat_{SS} - \left(\nabla^2
  \Loss_\numobs(\Thetastar)\right)_{SS}}_\infty \precsim d
\sqrt{\frac{\log p}{n}}.
\end{equation*}
Applying Lemma~\ref{LemSpecInverse} in Appendix~\ref{AppSupporting}
yields
\begin{equation}
\label{EqnPoutine}
\delta_2 \defn \opnorm{\left(\Qhat_{SS}\right)^{-1} - \left(\nabla^2
  \Loss_\numobs(\Thetastar) \right)_{SS}^{-1}}_\infty \precsim d
\sqrt{\frac{\log p}{n}}.
\end{equation}
Next we define the pair $(R, \Xi)$ according to
\begin{equation*}
R \defn \Bigg\|\underbrace{\left\{\Qhat_{S^cS}
  \left(\Qhat_{SS}\right)^{-1} - \left(\nabla^2
  \Loss_\numobs(\Thetastar)\right)_{S^cS} \left(\nabla^2
  \Loss_\numobs(\Thetastar)\right)_{SS}^{-1} \right\}}_\Xi (\nabla
\Loss_\numobs(\Thetastar))_S\Bigg\|_\infty.
\end{equation*}
Note that we have $R \le \opnorm{\Xi}_\infty \cdot \left\|\left(\nabla
\Loss_\numobs(\Thetastar)\right)_S\right\|_{\max} \precsim
\opnorm{\Xi}_\infty \cdot \sqrt{\frac{\log p}{n}}$, and moreover,
\begin{align*}
\opnorm{\Xi}_\infty & \le \opnorm{\left(\Qhat_{S^cS} - \left(\nabla^2
  \Loss_\numobs(\Thetastar)\right)_{S^cS}\right)
  \left(\left(\Qhat_{SS}\right)^{-1} - \left(\nabla^2
  \Loss_\numobs(\Thetastar)\right)_{SS}^{-1}\right)}_\infty \\
& \quad + \opnorm{\left(\Qhat_{S^cS} - \left(\nabla^2
  \Loss_\numobs(\Thetastar)\right)_{S^cS}\right) \left(\nabla^2
  \Loss_\numobs(\Thetastar)\right)_{SS}^{-1}}_\infty \\
& \quad + \opnorm{\left(\nabla^2
  \Loss_\numobs(\Thetastar)\right)_{S^cS}
  \left(\left(\Qhat_{SS}\right)^{-1} - \left(\nabla^2
  \Loss_\numobs(\Thetastar)\right)_{SS}^{-1}\right)}_\infty \\
& \le \delta_1 \delta_2 + \delta_1 \cdot
\opnorm{\left(\Qhat_{SS}\right)^{-1}}_\infty + \delta_2 \cdot
\opnorm{\left(\nabla^2 \Loss_\numobs(\Thetastar)\right)_{S^cS}}_\infty
\\
& \precsim d \sqrt{\frac{\log p}{n}}.
\end{align*}
Combined with our earlier upper bound on $R$, we conclude that $R
\precsim \sqrt{\frac{\log p}{n}}$, under the scaling $n \succsim d^2
\log p$.

Next, observe that
\begin{align*}
\Bigg\|\left(\nabla^2 \Loss_\numobs(\Thetastar)\right)_{S^cS} &
\left(\nabla^2 \Loss_\numobs(\Thetastar)\right)_{SS}^{-1} \left(\nabla
\Loss_\numobs(\Thetastar)\right)_S\Bigg\|_\infty \\
& \le \opnorm{\left(\nabla^2 \Loss_\numobs(\Thetastar)\right)_{S^cS}
  \left(\nabla^2 \Loss_\numobs(\Thetastar)\right)_{SS}^{-1}}_\infty
\cdot \left\|\left(\nabla
\Loss_\numobs(\Thetastar)\right)_S\right\|_{\max} \\
& \le \opnorm{\left(\nabla^2
  \Loss_\numobs(\Thetastar)\right)_{S^cS}}_\infty \cdot
\opnorm{\left(\nabla^2
  \Loss_\numobs(\Thetastar)\right)_{SS}^{-1}}_\infty \cdot
\left\|\left(\nabla \Loss_\numobs(\Thetastar)\right)_S\right\|_{\max}
\\
& \precsim \sqrt{\frac{\log p}{n}}.
\end{align*}
Hence, the primal-dual witness technique succeeds. Note that by
inequality~\eqref{EqnPoutine}, we also have
\begin{equation*}
\opnorm{\left(\Qhat_{SS}\right)^{-1}}_\infty \le \opnorm{\left(\Qhat_{SS}\right)^{-1} - \left(\nabla^2 \Loss_n(\Thetastar)\right)_{SS}^{-1}}_\infty + \opnorm{\left(\nabla^2 \Loss_n(\Thetastar)\right)_{SS}^{-1}}_\infty \le 2c_\infty.
\end{equation*}
By Theorem~\ref{ThmEllInf}, we conclude that $\Thetahat$ is the unique
global minimum of the program~\eqref{EqnGlassoLoss} with the desired
properties.

Finally, let us prove the claimed bounds on the Frobenius and spectral
norms.  Note that $\opnorm{\Thetahat - \Thetastar}_2 \le
\opnorm{\Thetahat - \Thetastar}_F \le \sqrt{s} \; \|\Thetahat -
\Thetastar\|_{\max}$.  Furthermore, since $\Thetahat - \Thetastar$ is
a symmetric matrix, we also have
\begin{equation*}
\opnorm{\Thetahat - \Thetastar}_2 \le \opnorm{\Thetahat -
  \Thetastar}_\infty \le d \; \|\Thetahat - \Thetastar\|_{\max}.
\end{equation*}
The bounds then follow from our earlier bound on the elementwise
$\ell_\infty$-norm.

%%%%%%%%%%%%%%%%%%%%%%%%%%%%%%%%%%%%%%%%%%%%%%%%%%%%%%%%%%%%%%%%%%%%

\section{Proofs for Section~\ref{SecSims}}

In this section, we provide details of proofs for the results in
Section~\ref{SecSims}.

\subsection{Proof of Proposition~\ref{PropOpt}}
\label{AppPropOpt}

This proof is a fairly straightforward modification of the proof of
Theorem 3 in Loh and Wainwright~\cite{LohWai13}, so we provide only a
sketch of how the argument deviates from the proof supplied there.

The only substantial difference between the two settings is that
$\Lossbar_n(\beta) = \Loss_\numobs(\beta) - q_\lambda(\beta)$ rather
than $\Lossbar_n(\beta) = \Loss_\numobs(\beta) - \frac{\mu}{2}
\|\beta\|_2^2$, and the side constraint is slightly
tweaked. Nonetheless, we have $\|\beta\|_1 \le R$, for all $\beta$ in
the feasible region, which is the only property of the side constraint
needed for the proofs of Loh and
Wainwright~\cite{LohWai13}. Concerning the particular form of
$\Lossbar_n$, we simply need to establish for the RSC relations that
\begin{equation}
\label{EqnTurmeric}
\widebar{\scriptT}(\beta_1, \beta_2) \ge \scriptT(\beta_1, \beta_2) -
\frac{\mu}{2} \|\beta_1 - \beta_2\|_2^2
\end{equation}
still holds, where $\widebar{\scriptT}(\beta_1, \beta_2) \defn
\Lossbar_n(\beta_1) - \Lossbar_n(\beta_2) - \inprod{\nabla
  \Lossbar_n(\beta_2)}{\beta_1 - \beta_2}$.  Note that
\begin{equation}
\label{EqnCurry}
\widebar{\scriptT}(\beta_1, \beta_2) = \scriptT(\beta_1, \beta_2) -
q_\lambda(\beta_1) + q_\lambda(\beta_2) + \inprod{\nabla
  q_\lambda(\beta_2)}{\beta_1 - \beta_2}.
\end{equation}
By Lemma~\ref{LemSmiley}(b) in Appendix~\ref{AppAmenable}, we have
\begin{equation*}
q_\lambda(\beta_1) - \frac{\mu}{2} \|\beta_1\|_2^2 -
q_\lambda(\beta_2) + \frac{\mu}{2} \|\beta_2\|_2^2 - \inprod{\nabla
  q_\lambda(\beta_2) - \mu \beta_2}{\beta_1 - \beta_2} \le 0,
\end{equation*}
implying that
\begin{equation}
\label{EqnCumin}
q_\lambda(\beta_1) - q_\lambda(\beta_2) - \inprod{\nabla
  q_\lambda(\beta_2)}{\beta_1 - \beta_2} \le \frac{\mu}{2} \|\beta_1 -
\beta_2\|_2^2.
\end{equation}
Combining inequalities~\eqref{EqnCurry} and~\eqref{EqnCumin} yields
the required inequality~\eqref{EqnTurmeric}. Finally, note that by our
assumption and equation~\eqref{EqnCurry}, we have
$\widebar{\scriptT}(\beta_1, \beta_2) \le \scriptT(\beta_1, \beta_2)$,
so the RSM condition holds for $\Lossbar_n$ with the same parameter
$\alpha_3$.  The remaining arguments proceed as before.

%%%%%%%%%%%%%%%%%%%%%%%%%%%%%%%%%%%%%%%%%%%%%%%%%%%%%%%%%%%%%%%%%%%%%%%%%

\subsection{Proof of Corollary~\ref{CorOpt}}
\label{AppCorOpt}

We set $\delta \asymp \sqrt{\frac{k \log p}{n}} \cdot \|\betahat -
\betastar\|_2$ in Proposition~\ref{PropOpt}. Then
\begin{equation*}
\|\beta^t - \betahat\|_\infty \le \|\beta^t - \betahat\|_2 \precsim
\frac{1}{(\alpha - \mu)^{1/2}}\sqrt{\frac{k \log p}{n}} \cdot
\|\betahat - \betastar\|_2 \leq \frac{1}{(\alpha - \mu)^{1/2}} \cdot
\frac{k \log p}{n},
\end{equation*}
where the last inequality follows by the assumption of statistical
consistency for $\betahat$. Under the scaling $n \succsim k^2 \log p$,
the desired result follows.

%%%%%%%%%%%%%%%%%%%%%%%%%%%%%%%%%%%%%%%%%%%%%%%%%%%%%%%%%%%%%%%%%%%%%%%%

\subsection{Proof of Lemma~\ref{LemGam1}}
\label{AppLemGam1}

The computation of the incoherence parameter is straightforward. For
the spectral properties, note that $v^T \Gamma v = 1 + 2\theta v_{k+1}
\sum_{j=1}^k v_j$ for any vector $v \in \real^p$.  Consequently, we
have
\begin{align*}
\lambda_{\min}(\Gamma) & = 1 + 2\theta \cdot \min_{\|v\|_2 = 1}
\left\{v_{k+1} \sum_{j=1}^k v_j \right\}, \quad \text{and} \quad
\lambda_{\max}(\Gamma) & = 1 + 2\theta \cdot \max_{\|v\|_2 = 1}
\left\{v_{k+1} \sum_{j=1}^k \right\}.
\end{align*}
We may write 
\begin{equation*}
\max_{\|v\|_2 = 1} \{v_{k+1} \sum_{j=1}^k v_j \} = \max_{0 \le \alpha
  \le 1} \Big\{\alpha \cdot \max_{\stackrel{w \in \real^k,}{\|w\|_2^2
    \le 1 - \alpha^2}} \|w\|_1 \Big\} \le \max_{0 \le \alpha \le 1}
\alpha \cdot \sqrt{k} \sqrt{1-\alpha^2},
\end{equation*}
where we have used the fact that $\|w\|_1 \le \sqrt{k} \|w\|_2$. It is
easy to see that the final expression is maximized when $\alpha =
\frac{1}{\sqrt{2}}$, so $\lambda_{\max}(\Gamma) \le \theta
\sqrt{k}$. Equality is achieved for the vector \mbox{$(v_1, \dots,
  v_k, v_{k+1}) = \left(\frac{1}{\sqrt{2k}}, \cdots,
  \frac{1}{\sqrt{2k}}, \frac{1}{\sqrt{2}}\right)$.}  The
lower-eigenvalue bound follows by a similar argument, with equality
achieved when $(v_1, \dots, v_k, v_{k+1}) = \left(\frac{1}{\sqrt{2k}},
\cdots, \frac{1}{\sqrt{2k}}, \frac{-1}{\sqrt{2}}\right)$.

%%%%%%%%%%%%%%%%%%%%%%%%%%%%%%%%%%%%%%%%%%%%%%%%%%%%%%%%%%%%%%%%%%%%%%%%%%%

\section{Some useful auxiliary results}
\label{AppSupporting}

Finally, we provide some useful auxiliary results, which we employ in
the proofs of our main theorems. \\

\subsection{Properties of amenable regularizers}
\label{AppAmenable}

The following lemma is useful in various parts of our analysis. Part
(a) is based on Lemma 4 of Loh and Wainwright~\cite{LohWai13}.
\begin{lem*}
\label{LemSmiley}
Consider a $\mu$-amenable regularizer $\rho_\lambda$.  Then we have
\begin{enumerate}
\item[(a)] $|\rho'_\lambda(t)| \le \lambda$, for all $t \neq 0$, and
\item[(b)] The function $q_\lambda(t) - \frac{\mu}{2} t^2$ is concave
  and everywhere differentiable.
\end{enumerate}
\end{lem*}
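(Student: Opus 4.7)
\textbf{Proof proposal for Lemma~\ref{LemSmiley}.}

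The plan is to exploit each of the defining properties (i)--(vi) of $\mu$-amenability, handling the two parts largely independently. Both parts reduce, by the symmetry property (i), to analyzing $\rho_\lambda$ on $(0,\infty)$, which will streamline the bookkeeping.

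For part (a), I would first use properties (i) and (vi) together with $\rho_\lambda(0)=0$ to identify the limit $\lim_{t\to 0^+}\rho_\lambda(t)/t=\lambda$. Since property (iii) forces $t\mapsto\rho_\lambda(t)/t$ to be nonincreasing on $\real^+$, this yields the pointwise bound $\rho_\lambda(t)/t\le\lambda$ for all $t>0$. Differentiating the ratio and applying the same monotonicity (equivalently, writing $\frac{d}{dt}(\rho_\lambda(t)/t)\le 0$) gives $t\rho'_\lambda(t)-\rho_\lambda(t)\le 0$, so $\rho'_\lambda(t)\le\rho_\lambda(t)/t\le\lambda$. Property (ii) supplies $\rho'_\lambda(t)\ge 0$ for $t>0$, and by the symmetry (i) we have $\rho'_\lambda(-t)=-\rho'_\lambda(t)$, which extends $|\rho'_\lambda(t)|\le\lambda$ to all $t\neq 0$. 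The only mildly delicate point is the division by $t$ at the origin, handled by appealing directly to the limit in (vi).

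For part (b), set $g(t)\defn q_\lambda(t)-\tfrac{\mu}{2}t^2 = \lambda|t|-\rho_\lambda(t)-\tfrac{\mu}{2}t^2$. Differentiability everywhere except possibly at $0$ follows from (iv). At $t=0$, I would compute the one-sided derivatives: for $t>0$, $g'(t)=\lambda-\rho'_\lambda(t)-\mu t\to 0$ as $t\to 0^+$ by (vi), and by symmetry the same holds from the left. So $g$ is differentiable at $0$ with $g'(0)=0$.

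For concavity, on $(0,\infty)$ write $g(t)=\lambda t-\bigl(\rho_\lambda(t)+\tfrac{\mu}{2}t^2\bigr)$: the first summand is linear and the second is convex by (v), so $g$ is concave on $(0,\infty)$; symmetry (i) gives concavity on $(-\infty,0)$. The main (mild) obstacle is to patch these two half-line concavity statements into global concavity through the origin. I would do so by showing that the derivative $g'$ is nonincreasing across $t=0$: on $(0,\infty)$, concavity plus the limit $g'(t)\to 0$ as $t\to 0^+$ forces $g'(t)\le 0$ and nonincreasing; by the symmetry-induced identity $g'(t)=-g'(-t)$, on $(-\infty,0)$ we get $g'(t)\ge 0$ and still nonincreasing, with matching limit $0$ at $0^-$. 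Hence $g'$ is nonincreasing on all of $\real$, which together with differentiability at $0$ implies that $g$ is concave globally, completing the proof.
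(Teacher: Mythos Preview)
Your proposal is correct and follows essentially the same approach as the paper: for (a), both arguments bound $\rho'_\lambda(t)$ above by $\rho_\lambda(t)/t\le\lambda$ via the monotonicity in (iii) (you differentiate the ratio, the paper uses the equivalent secant inequality), and for (b), both decompose $g$ on each half-line using (v) and then patch across $0$ using (vi), with your version spelling out the monotonicity of $g'$ across the origin more explicitly than the paper does.
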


\begin{proof}
(a) Consider $0 < t \le s$. By condition (iii), we have
  $\frac{\rho_\lambda(s) - \rho_\lambda(t)}{s-t} \leq
  \frac{\rho_\lambda(t)}{t}$.  By conditions (iii) and (iv), we also
  have $\frac{\rho_\lambda(t)}{t} \le \lim_{u \rightarrow 0^+}
  \frac{\rho_\lambda(u)}{u} = \rho_\lambda'(0) \le \lambda$.  Putting
  together the pieces, we find that \mbox{$\rho'_\lambda(t) = \lim_{s
      \rightarrow t} \frac{\rho_\lambda(s) - \rho_\lambda(t)}{s-t} \le
    \lambda$.}  A similar argument holds when $t < 0$.\\

\noindent (b) If $t > 0$, we can write $q_\lambda(t) - \frac{\mu}{2}
t^2 = \lambda t - \rho_\lambda(t) - \frac{\mu}{2} t^2$, which is
concave since $\rho_\lambda(t) + \frac{\mu}{2}$ is convex, by
condition (v). Similarly, $q_\lambda(t)$ is concave for $t < 0$. At $t
= 0$, we have $q'_\lambda(0) = 0$, by condition (vi). Then
$q_\lambda(t) - \frac{\mu}{2} t^2$ is a differentiable function with
monotonically decreasing derivative, implying concavity of the
function.
\end{proof}

%%%%%%%%%%%%%%%%%%%%%%%%%%%%%%%%%%%%%%%%%%%%%%%%%%%%%%%%%%%%%%%%%%%%%%%%

\subsection{Bounds on $\ell_2$-errors of stationary points}
\label{AppLemEll2}

The following result is taken from Loh and Wainwright~\cite{LohWai13}.
It applies to any stationary point of the program $\min_{\|\beta\|_1
  \le R, \; \beta \in \Omega} \left\{\Loss_\numobs(\beta) +
\rho_\lambda(\beta)\right\}$, meaning a vector $\betatil$ such that
$\inprod{\nabla \Loss_\numobs(\betatil) + \nabla
  \rho_\lambda(\betatil)}{\beta - \betatil} \ge 0$ for all feasible
$\beta \in \real^p$.

\begin{lem*} [Theorem 1 of Loh and Wainwright~\cite{LohWai13}]
\label{LemEll2Bounds}
Suppose $\Loss_\numobs$ satisfies the RSC condition~\eqref{EqnRSC} and
$\rho_\lambda$ is $\mu$-amenable, with $\mu < 2 \alpha_1$. Suppose the
sample size satisfies the scaling $n \ge \frac{16R^2 \max(\tau_1^2,
  \tau_2^2)}{\alpha_2^2} \log p$, and $2 \cdot \max\left\{\|\nabla
\Loss_\numobs(\betastar)\|_\infty, \; \alpha_2 \sqrt{\frac{\log
    p}{n}}\right\} \le \lambda \le \frac{\alpha_2}{6R}$.  Then any
stationary vector $\betatil$ satisfies the bounds
\begin{equation*}
\|\betatil - \betastar\|_2 \le \frac{7 \lambda \sqrt{k}}{4\alpha_1 -
  2\mu}, \qquad \text{and} \qquad \|\betatil - \betastar\|_1 \le
\frac{28\lambda k}{2\alpha_1 - \mu}.
\end{equation*}
\end{lem*}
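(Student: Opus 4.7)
The plan is to adapt the stationary-point analysis of Loh--Wainwright \cite{LohWai13}. Let $\nuhat \defn \betatil - \betastar$; by feasibility of $\betastar$ and $\betatil$, we have $\|\nuhat\|_1 \le 2R$. First I would show that $\|\nuhat\|_2 \le 1$, so that the local RSC~\eqref{EqnLocalRSC} applies with $\Delta = \nuhat$. Arguing by contradiction, suppose $\|\nuhat\|_2 > 1$. The global RSC~\eqref{EqnL2Bd}, combined with the first-order stationarity $\langle \nabla \Loss_n(\betatil) + \nabla \rho_\lambda(\betatil), \betastar - \betatil\rangle \ge 0$ and the bound $\|\nabla \rho_\lambda\|_\infty \le \lambda$ from Lemma~\ref{LemSmiley}(a), yields
\begin{equation*}
\alpha_2 \|\nuhat\|_2 \le \Bigl( \tfrac{3\lambda}{2} + \tau_2 \sqrt{\tfrac{\log p}{n}} \Bigr) \|\nuhat\|_1 \le 2R \Bigl( \tfrac{3\lambda}{2} + \tau_2 \sqrt{\tfrac{\log p}{n}} \Bigr).
\end{equation*}
The hypotheses $\lambda \le \alpha_2/(6R)$ and $n \ge 16 R^2 \tau_2^2 / \alpha_2^2 \cdot \log p$ then force the right-hand side below $\alpha_2$, contradicting $\|\nuhat\|_2 > 1$.

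With $\|\nuhat\|_2 \le 1$, I would next derive a basic inequality. Combining the local RSC $\langle \nabla \Loss_n(\betatil) - \nabla \Loss_n(\betastar), \nuhat\rangle \ge \alpha_1\|\nuhat\|_2^2 - \tau_1 (\log p/n)\|\nuhat\|_1^2$ with stationarity and the coordinate-wise weak-convexity inequality $\rho_\lambda(s) \ge \rho_\lambda(t) + \rho_\lambda'(t)(s-t) - (\mu/2)(s-t)^2$ (a direct consequence of property~(v) of $\mu$-amenability), and then splitting $\rho_\lambda(\betastar) - \rho_\lambda(\betatil)$ coordinate by coordinate --- using $|\rho_\lambda'| \le \lambda$ on $S$, and the lower bound $\rho_\lambda(\nuhat_j) \ge \lambda|\nuhat_j| - (\mu/2)\nuhat_j^2$ on $S^c$ (which follows from weak convexity at the origin together with the selection property~(vi)) --- yields, after absorbing $2R\tau_1 (\log p/n) \le c\lambda$ into the tolerance term,
\begin{equation*}
(2\alpha_1 - \mu)\|\nuhat\|_2^2 \le \tfrac{7\lambda}{2}\|\nuhat_S\|_1 - \tfrac{\lambda}{2}\|\nuhat_{S^c}\|_1.
\end{equation*}
The negative $-\lambda \|\nuhat_{S^c}\|_1 / 2$ term yields the cone inequality $\|\nuhat_{S^c}\|_1 \le 7 \|\nuhat_S\|_1$, and hence $\|\nuhat\|_1 \le 8\sqrt{k}\|\nuhat\|_2$. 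Substituting the cone condition back in produces $(2\alpha_1 - \mu)\|\nuhat\|_2 \le 7\lambda\sqrt{k}/2$, which is the claimed $\ell_2$-bound; the $\ell_1$-bound $\|\nuhat\|_1 \le 28\lambda k/(2\alpha_1 - \mu)$ then follows by combining the two inequalities.

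The principal obstacle is the bookkeeping in the basic inequality: coaxing the denominator $2\alpha_1 - \mu$ (rather than the naive $\alpha_1 - \mu$) out of the nonconvex regularizer requires invoking weak-convexity of $\rho_\lambda$ only on $S^c$, where the $(\mu/2)\|\nuhat_{S^c}\|_2^2$ penalty must be absorbed into the RSC curvature term, while on $S$ relying on the sharper Lipschitz bound $|\rho_\lambda'| \le \lambda$ alone. A second subtlety is ensuring that the sample-size scaling $n \succsim R^2 \log p$ dominates the residual $\tau_1(\log p/n)\|\nuhat\|_1^2$ term strongly enough to preserve both the cone condition and the $\sqrt{k}$-scaling of the final bound.
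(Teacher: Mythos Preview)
The paper itself does not prove this lemma; it records it verbatim as Theorem~1 of \cite{LohWai13} and cites that reference for the argument. Your sketch is precisely the proof given there: first rule out $\|\nuhat\|_2 > 1$ via the far-range RSC branch~\eqref{EqnL2Bd} together with stationarity and $\|\nabla\rho_\lambda\|_\infty \le \lambda$; then combine the local RSC~\eqref{EqnLocalRSC}, stationarity, and the weak-convexity and selection properties of $\rho_\lambda$ to obtain a basic inequality with a negative $\|\nuhat_{S^c}\|_1$ term; read off the cone condition and the stated $\ell_2$- and $\ell_1$-bounds.

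One small correction to your commentary on the ``principal obstacle'': the denominator $2\alpha_1 - \mu$ (equivalently, curvature $\alpha_1 - \mu/2$) is not a consequence of restricting weak convexity to $S^c$. Property~(v) of $\mu$-amenability already carries the coefficient $\mu/2$, so invoking it on all coordinates yields at most $(\mu/2)\|\nuhat\|_2^2$ and hence the same constant; the proof in \cite{LohWai13} applies it globally. Restricting to $S^c$ is harmless but not what produces the factor of two.
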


\vspace*{.1in}

%%%%%%%%%%%%%%%%%%%%%%%%%%%%%%%%%%%%%%%%%%%%%%%%%%%%%%%%%%%%%%%%%%%%%%%%%%%

\subsection{Sufficient conditions for local minima}

The following lemma is a minor extension of results from Fletcher and
Watson~\cite{FleWat80}.  It applies to functions
$f \in C^2$ and $g \in C^1$, such that $g(x) - \frac{\kappa}{2}
\|x\|_2^2$ is concave, for some $\kappa \ge 0$.
\begin{lem*}
\label{LemSecondOrder}
Suppose $x^*$ is feasible for the program
\begin{equation}
\label{EqnNorm}
\min_x \big\{\underbrace{f(x) - g(x)}_{h(x)} + \lambda\|x\|_1\big\},
\qquad \mbox{s.t.} \quad \|x\|_1 \le R,
\end{equation}
and there exist $v^*, w^* \in \partial \|x^*\|_1$, $\mu^* \ge 0$
such that
\begin{subequations}
\begin{align}
\label{EqnCompSlack}
\mu^*(R - \|x^*\|_1) & = 0, \qquad \\
\label{EqnZeroGrad}
\nabla h(x^*) + \lambda v^* + \mu^* w^* & = 0, \qquad \mbox{and} \\
\label{EqnStrictConvex}
s^T \left(\nabla^2 f(x^*)\right) s & > \kappa, \qquad \mbox{for all $s
  \in G^*$},
\end{align}
\end{subequations}
where 
\begin{align*}
G^* \defn & \{s: \|s\|_1 = 1; \quad \sup_{w \in \partial \|x^*\|_1}
s^T w \le 0 \quad \mbox{if} \quad \|x^*\|_1 = R; \\
& \sup_{v \in \partial \|x^*\|_1} s^T (\nabla h(x^*) + \lambda v) = 0;
\quad \mu^* \sup_{w \in \partial \|x^*\|_1} s^T w = 0\}.
\end{align*}
Then $x^*$ is an isolated local minimum of the
program~\eqref{EqnNorm}.
\end{lem*}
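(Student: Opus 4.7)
The plan is to prove this by contradiction. Suppose $x^*$ is not an isolated local minimum: then there exists a sequence $\{x_k\}$ of feasible points ($\|x_k\|_1 \le R$) with $x_k \to x^*$, $x_k \ne x^*$, and $h(x_k) + \lambda\|x_k\|_1 \le h(x^*) + \lambda\|x^*\|_1$. Set $t_k \defn \|x_k - x^*\|_1$ and $s_k \defn (x_k - x^*)/t_k$, and, by compactness of the unit $\ell_1$-sphere in $\real^p$, pass to a subsequence along which $s_k \to s^*$ with $\|s^*\|_1 = 1$. The contradiction will come from showing, simultaneously, that $s^* \in G^*$ and that $s^{*T}\nabla^2 f(x^*) s^* \le \kappa$, which clashes with the hypothesis~\eqref{EqnStrictConvex}.

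For the second-order bound, I would combine the Taylor expansion of $f \in C^2$ with the first-order inequality
\[
g(x_k) - g(x^*) \le \inprod{\nabla g(x^*)}{x_k - x^*} + \tfrac{\kappa}{2}\|x_k - x^*\|_2^2,
\]
which holds because $g - \tfrac{\kappa}{2}\|\cdot\|_2^2$ is concave and $C^1$. Substituting the decomposition $\nabla h(x^*) = -\lambda v^* - \mu^* w^*$ from~\eqref{EqnZeroGrad} into the hypothesis $h(x_k) - h(x^*) + \lambda(\|x_k\|_1 - \|x^*\|_1) \le 0$, the result rearranges to
\[
\lambda\bigl(\|x_k\|_1 - \|x^*\|_1 - \inprod{v^*}{x_k - x^*}\bigr) - \mu^*\inprod{w^*}{x_k - x^*} + \tfrac{1}{2}(x_k - x^*)^T\bigl(\nabla^2 f(x^*) - \kappa I\bigr)(x_k - x^*) \le o(\|x_k - x^*\|_2^2).
\]
The first bracketed term is non-negative by the subgradient inequality for $v^* \in \partial\|x^*\|_1$. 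The $\mu^*$ term is also non-negative: when $\mu^* > 0$, complementary slackness~\eqref{EqnCompSlack} forces $\|x^*\|_1 = R$, so $\|x_k\|_1 \le R = \|x^*\|_1$, and combining this with $\|x_k\|_1 \ge \|x^*\|_1 + \inprod{w^*}{x_k - x^*}$ gives $\inprod{w^*}{x_k - x^*} \le 0$. Dividing by $t_k^2$ and using $\|x_k - x^*\|_2 \le t_k$, the limit along the subsequence yields $s^{*T}\nabla^2 f(x^*) s^* \le \kappa \|s^*\|_2^2 \le \kappa\|s^*\|_1^2 = \kappa$.

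For membership in $G^*$, I would revisit the hypothesis divided only by $t_k$. By Lipschitz continuity of $\|\cdot\|_1$ and $\|x_k - (x^* + t_k s^*)\|_1 = t_k\|s_k - s^*\|_1 = o(t_k)$, the difference quotient $(\|x_k\|_1 - \|x^*\|_1)/t_k$ converges to the one-sided directional derivative $\sup_{v \in \partial\|x^*\|_1} s^{*T} v$. This yields $\sup_v s^{*T}(\nabla h(x^*) + \lambda v) \le 0$. For the reverse inequality, \eqref{EqnZeroGrad} gives $\nabla h(x^*) + \lambda v^* = -\mu^* w^*$, whence $\sup_v s^{*T}(\nabla h(x^*) + \lambda v) \ge -\mu^* s^{*T} w^*$; when $\mu^* > 0$, feasibility forces $\sup_v s^{*T} v \le 0$ in the limit, so $s^{*T} w^* \le 0$ and $-\mu^* s^{*T} w^* \ge 0$. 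Thus $\sup_v s^{*T}(\nabla h(x^*) + \lambda v) = 0$. Finally, expanding this equality via $\nabla h(x^*) = -\lambda v^* - \mu^* w^*$ gives $\mu^* s^{*T} w^* = \lambda\bigl(\sup_v s^{*T} v - s^{*T} v^*\bigr) \ge 0$; paired with $s^{*T} w^* \le 0$ (when $\mu^* > 0$), this forces $s^{*T} w^* = 0$, hence $\sup_w s^{*T} w = 0$ and $\mu^* \sup_w s^{*T} w = 0$. The auxiliary condition $\sup_w s^{*T} w \le 0$ in the case $\|x^*\|_1 = R$ follows from the same feasibility-limit argument.

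The main obstacle is the verification of $s^* \in G^*$: its defining conditions are precisely the ``zero directional-derivative'' conditions along $s^*$, and they must be teased out of a single first-order Taylor expansion together with the subgradient inequalities for $v^*$ and $w^*$ and the zero-gradient relation~\eqref{EqnZeroGrad}, without collapsing to weaker inequalities. A small technical subtlety is that the first-order terms after rearrangement are only non-negative rather than vanishing, so the quadratic analysis must tolerate their presence on the left-hand side; dropping them is valid precisely because of their sign. Once the two pieces are in place, combining $s^{*T}\nabla^2 f(x^*) s^* \le \kappa$ with the strict inequality $s^{*T}\nabla^2 f(x^*) s^* > \kappa$ guaranteed by~\eqref{EqnStrictConvex} for $s^* \in G^*$ produces the required contradiction, completing the proof.
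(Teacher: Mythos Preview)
Your proof is correct and follows essentially the same contradiction strategy as the paper: extract a limiting direction from a sequence of near-optimal points, show it lies in $G^*$, and derive a second-order inequality that clashes with~\eqref{EqnStrictConvex}. The only notable difference is that you normalize by the $\ell_1$-norm ($t_k=\|x_k-x^*\|_1$) rather than the $\ell_2$-norm used in the paper; this choice is actually more consistent with the stated definition of $G^*$ (which requires $\|s\|_1=1$) and lets you conclude $s^{*T}\nabla^2 f(x^*)s^*\le\kappa\|s^*\|_2^2\le\kappa$ directly, whereas the paper's $\ell_2$-normalized limit has $\|s\|_2=1$ and an implicit rescaling is needed to land in $G^*$ as written.
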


\begin{proof}
	
The proof of this lemma essentially follows the proof of Theorem 3 of
Fletcher and Watson~\cite{FleWat80}, except it allows for a composite
function $h = f-g$ in the objective that is not in $C^2$. Nonetheless,
we include a full proof for clarty and completeness.

Suppose for the sake of contradiction that $x^*$ is not an isolated
local minimum. Then there exists a sequence of feasible points
$\{x^{(k)}\} \rightarrow x^*$ with $\phi(x^{(k)}) \le \phi(x^*)$,
where
\begin{equation*}
\phi(x) \defn h(x) + \lambda \|x\|_1.
\end{equation*}
Let $s^{(k)} \defn \frac{x^{(k)} - x^*}{\|x^{(k)} - x\|_2}$; then
$\{s^{(k)}\}$ is a set of feasible directions. Since $\{s^{(k)}\}
\subseteq \ball_2(1)$, the set must possess a point of accumulation $s
\in \ball_2(1)$, and we may extract a convergent
subsequence. Relabeling the points as necessary, we assume that
$\{s^{(k)}\} \rightarrow s$. We show that $s \in G^*$.

Since the feasible region is closed, $s$ is also a feasible direction
at $x^*$. In particular, if $\|x^*\|_1 = R$, we must have
%\begin{subequations}
\begin{equation}
\label{EqnCoriander}
\sup_{w \in \partial \|x^*\|_1} s^T w \le 0.
\end{equation}
Together with equation~\eqref{EqnCompSlack}, this implies that
\begin{equation}
\label{EqnRosemary}
\mu^* \sup_{w \in \partial \|x^*\|_1} s^T w \le 0.
\end{equation}
Note that by equation~\eqref{EqnZeroGrad}, we have
\begin{equation}
\label{EqnCilantro}
s^T(\nabla h(x^*) + \lambda v^*) = - \mu^* s^T w^* \ge 0,
\end{equation}
%\end{subequations}
where the inequality follows from the fact that if $\mu^* \neq 0$, we
have $\|x^*\|_1 = R$, by equation~\eqref{EqnCompSlack}, Hence, $s^T w^* \le
0$, by inequality~\eqref{EqnCoriander}.

By the definition of the subgradient, we have
\begin{equation*}
\|x^{(k)} - x^*\|_2 \cdot s^{(k)T} v \le \bigg \|x^* + \|x^{(k)} -
x^*\|_2 s^{(k)} \bigg \|_1 - \|x^*\|_1 = \|x^{(k)}\|_1 - \|x^*\|_1,
\end{equation*}
for all $v \in \partial \|x^*\|_1$ and all $k$. In particular, $s^T v
= \lim \limits_{k \rightarrow \infty} s^{(k)T} v \le \lim_{k
  \rightarrow \infty} \frac{\|x^{(k)}\|_1 - \|x^*\|_1}{\|x^{(k)} -
  x^*\|_2}$, so
\begin{equation}
\label{EqnParsley}
\sup_{v \in \partial \|x^*\|_1} s^T v \le \lim_{k \rightarrow \infty}
\frac{\|x^{(k)}\|_1 - \|x^*\|_1}{\|x^{(k)} - x^*\|_2}.
\end{equation}
Furthermore,
\begin{equation}
\label{EqnSage}
s^T \nabla h(x^*) = \lim_{k \rightarrow \infty} s^{(k)T} \nabla h(x^*)
= \lim_{k \rightarrow \infty} \frac{\inprod{x^{(k)} - x^*}{\nabla
    h(x^*)}}{\|x^{(k)} - x^*\|_2} = \lim_{k \rightarrow \infty}
\frac{h(x^{(k)}) - h(x^*)}{\|x^{(k)} - x^*\|_2},
\end{equation}
since $x^{(k)} \rightarrow x^*$ and $f \in C^1$. Combining
inequality~\eqref{EqnParsley} with equation~\eqref{EqnSage}, we
conclude that
\begin{equation*}
\sup_{v \in \partial \|x^*\|_1} s^T(\nabla h(x^*) + \lambda v) \le
\lim_{k \rightarrow \infty} \frac{\phi(x^{(k)}) - \phi(x^*)}{\|x^{(k)}
  - x^*\|_2} \le 0,
\end{equation*}
where the second inequality follows from the assumption $\phi(x^{(k)})
\le \phi(x^*)$. Hence, using inequality~\eqref{EqnCilantro}, we
conclude that
\begin{equation}
\label{EqnDill}
\sup_{v \in \partial \|x^*\|_1} s^T (\nabla h(x^*) + \lambda v) = s^T
(\nabla h(x^*) + \lambda v^*) = 0,
\end{equation}
and by equation~\eqref{EqnZeroGrad}, we have $\mu^* s^T w^* = 0$, as
well. Together with inequality~\eqref{EqnRosemary}, this implies that
\begin{equation}
\label{EqnThyme}
\mu^* \sup_{w \in \partial \|x^*\|_1} s^T w = 0.
\end{equation}
Combining inequalities~\eqref{EqnCoriander}, \eqref{EqnDill},
and~\eqref{EqnThyme}, we therefore conclude that $s \in G^*$.

Now note that
\begin{equation*}
\phi(x^{(k)}) = h(x^{(k)}) + \lambda\|x^{(k)}\|_1 \ge h(x^{(k)}) +
\lambda x^{(k)T} v^* + \mu^*(x^{(k)T} w^* - R),
\end{equation*}
using the fact that $\mu^* \ge 0$ and $x^{(k)T} w^* \le \|x^{(k)}\|_1
\le R$. Noting that $\phi(x^*) = h(x^*) + \lambda x^{*T} v^* +
\mu^*(x^{*T} w^* - R)$, we have
\begin{align}
\label{EqnCraisin}
0 & \ge \phi(x^{(k)}) - \phi(x^*) \notag \\
& \ge h(x^{(k)}) - h(x^*) + \inprod{\lambda v^* + \mu^* w^*}{x^{(k)} -
  x^*} \notag \\
& = h(x^{(k)}) - h(x^*) - \inprod{\nabla h(x^*)}{x^{(k)} - x^*} \notag
\\
& = \left(f(x^{(k)}) - f(x^*) - \inprod{\nabla f(x^*)}{x^{(k)} -
  x^*}\right) - \left(g(x^{(k)}) - g(x^*) - \inprod{\nabla
  g(x^*)}{x^{(k)} - x^*}\right).
\end{align}
By the concavity of $g(x) - \frac{\kappa}{2} \|x\|_2^2$, we have
$g(x^{(k)}) - g(x^*) - \inprod{\nabla g(x^*)}{x^{(k)} - x^*} \le
\frac{\kappa}{2} \|x^{(k)} - x^*\|_2^2$.  Combining with
inequality~\eqref{EqnCraisin} and using Taylor's theorem, we then have
\begin{equation*}
0 \ge \frac{1}{2} (x^{(k)} - x^*)^T \nabla^2 f(x^*) (x^{(k)} - x^*) -
\frac{\kappa}{2} \|x^{(k)} - x^*\|_2^2 + o(\|x^{(k)} - x^*\|_2^2).
\end{equation*}
Dividing through by $\|x^{(k)} - x^*\|_2^2$ and taking a limit as $k
\rightarrow \infty$, we obtain the bound $\frac{1}{2} s^T
\left(\nabla^2 f(x^*)\right) s - \frac{\kappa}{2} \leq 0$,
contradicting the assumption~\eqref{EqnStrictConvex}. Hence, we
conclude that $x^*$ must indeed be an isolated local minimum.

\end{proof}

%%%%%%%%%%%%%%%%%%%%%%%%%%%%%%%%%%%%%%%%%%%%%%%%%%%%%%%%%%%%%%%%%%%%%%%

\subsection{Some matrix-theoretic lemmas}

Here, we collect some useful lemmas on matrices and Kronecker products.

\begin{lem*}
\label{LemSpecInverse}
Let $A, B \in \real^{p \times p}$ be invertible. For any matrix norm
$\opnorm{\cdot}$, we have
\begin{equation}
\label{EqnPeach} \
\opnorm{A^{-1} - B^{-1}} \le \frac{\opnorm{A^{-1}}^2 \opnorm{A-B}}{1 -
  \opnorm{A^{-1}} \opnorm{A-B}}.
	\end{equation}
In particular, if $\opnorm{A^{-1}} \opnorm{A-B} \leq 1/2$, then
$\opnorm{A^{-1} - B^{-1}} = \order\left(\opnorm{A^{-1}}^2
\opnorm{A-B}\right)$.
\end{lem*}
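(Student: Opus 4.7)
My plan is to start from the standard resolvent-type identity
\[
A^{-1} - B^{-1} \; = \; A^{-1}(B - A)B^{-1},
\]
which is verified by left-multiplying both sides by $A$ and right-multiplying by $B$. Once this identity is in hand, the bound follows from submultiplicativity of the matrix norm and the triangle inequality.

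Concretely, I would first apply submultiplicativity to the identity to obtain
\[
\opnorm{A^{-1} - B^{-1}} \;\le\; \opnorm{A^{-1}} \cdot \opnorm{A - B} \cdot \opnorm{B^{-1}}.
\]
Then I would split $\opnorm{B^{-1}}$ via the triangle inequality as
\[
\opnorm{B^{-1}} \;\le\; \opnorm{A^{-1}} + \opnorm{B^{-1} - A^{-1}},
\]
substitute, and collect terms involving $\opnorm{A^{-1} - B^{-1}}$ on the left-hand side. Under the implicit assumption $\opnorm{A^{-1}}\opnorm{A - B} < 1$ (which makes the right-hand side of~\eqref{EqnPeach} positive; otherwise the inequality is vacuous), the coefficient on the left is strictly positive, and dividing yields the claimed bound. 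The second assertion, that $\opnorm{A^{-1} - B^{-1}} = \order(\opnorm{A^{-1}}^2 \opnorm{A - B})$ when $\opnorm{A^{-1}}\opnorm{A - B} \le 1/2$, then follows immediately by bounding the denominator $1 - \opnorm{A^{-1}}\opnorm{A-B} \ge 1/2$.

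There is no real obstacle here: the proof is a one-identity-plus-triangle-inequality calculation. The only minor subtlety is ensuring submultiplicativity of the norm, but in all applications in this paper (spectral, Frobenius, and $\ell_\infty$-operator norms), this holds automatically. An alternative, equivalent route would be to expand $B^{-1} = \sum_{\ell=0}^{\infty}(-A^{-1}(B-A))^\ell A^{-1}$ as a Neumann series and bound termwise, but the resolvent-identity approach is cleaner and avoids having to argue invertibility of $B$ from that of $A$ separately, since invertibility of both matrices is already part of the hypothesis.
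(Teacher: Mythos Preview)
Your proof is correct but takes a different route from the paper. The paper uses precisely the Neumann-series approach you mention as an alternative: it writes $B^{-1}-A^{-1}=\sum_{\ell=1}^\infty(-A^{-1}(B-A))^\ell A^{-1}$, bounds each term by $\opnorm{A^{-1}}^{\ell+1}\opnorm{A-B}^\ell$ via submultiplicativity, and sums the geometric series. Your approach instead uses the single resolvent identity $A^{-1}-B^{-1}=A^{-1}(B-A)B^{-1}$, bounds $\opnorm{B^{-1}}\le\opnorm{A^{-1}}+\opnorm{A^{-1}-B^{-1}}$, and solves the resulting linear inequality in $\opnorm{A^{-1}-B^{-1}}$. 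Both arguments require the same submultiplicativity hypothesis and the same implicit condition $\opnorm{A^{-1}}\opnorm{A-B}<1$ for the bound to be nontrivial. Your version is slightly more self-contained in that it uses the assumed invertibility of $B$ directly rather than recovering it from convergence of the series; the paper's version makes the dependence on higher-order terms more explicit. Either way the computation is essentially the same length and yields the identical bound.
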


\begin{proof}
We use the matrix expansion~\eqref{EqnMatInvExpand}, where $\Delta =
B-A$. By the triangle inequality and multiplicativity of the matrix
norm, we then have
\begin{align*}
\opnorm{A^{-1} - B^{-1}} & \le \sum_{\ell=1}^\infty \opnorm{A^{-1}}^2
\opnorm{A-B}^\ell \; = \; \frac{\opnorm{A^{-1}}^2 \opnorm{A-B}}{1 -
  \opnorm{A^{-1}} \opnorm{A-B}},
\end{align*}
as claimed.
\end{proof}

\begin{lem*}
\label{LemKronInfty}
For any matrices $A$ and $B$, we have $\opnorm{A \otimes B}_\infty =
\opnorm{A}_\infty \cdot \opnorm{B}_\infty$.
\end{lem*}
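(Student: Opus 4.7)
The plan is to exploit the well-known characterization of the $\ell_\infty$-operator norm as the maximum absolute row sum, namely $\opnorm{M}_\infty = \max_i \sum_j |M_{ij}|$, together with the explicit block structure of the Kronecker product. Because the argument reduces to a short algebraic identity rather than an inequality in each direction, I expect no real obstacles: everything will come out as an equality on the first pass.

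First I would index the rows of $A \otimes B$ by pairs $(i,k)$, where $i$ ranges over rows of $A$ and $k$ over rows of $B$, and similarly the columns by $(j,\ell)$, with the entry in position $((i,k),(j,\ell))$ equal to $A_{ij} B_{k\ell}$. Then for any fixed pair $(i,k)$, the corresponding absolute row sum in $A \otimes B$ is
\begin{equation*}
\sum_{j,\ell} |A_{ij} B_{k\ell}| \; = \; \Bigl(\sum_j |A_{ij}|\Bigr) \Bigl(\sum_\ell |B_{k\ell}|\Bigr),
\end{equation*}
where the factorization is immediate from $|A_{ij} B_{k\ell}| = |A_{ij}| \, |B_{k\ell}|$ and the distributive law for finite sums.

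Next I would take the maximum over $(i,k)$. Since the two factors depend only on $i$ and only on $k$, respectively, the maximum of their product separates:
\begin{equation*}
\max_{i,k} \Bigl(\sum_j |A_{ij}|\Bigr) \Bigl(\sum_\ell |B_{k\ell}|\Bigr) \; = \; \Bigl(\max_i \sum_j |A_{ij}|\Bigr) \Bigl(\max_k \sum_\ell |B_{k\ell}|\Bigr) \; = \; \opnorm{A}_\infty \cdot \opnorm{B}_\infty.
\end{equation*}
By the characterization recalled at the start, the left-hand side equals $\opnorm{A \otimes B}_\infty$, which gives the claimed identity and completes the proof. The only subtlety worth checking is that nonnegativity of all summands is what allows the max of the product to split into the product of maxes; once that is observed, the argument is essentially mechanical.
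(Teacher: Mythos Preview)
Your proof is correct and is essentially identical to the paper's own argument: both use the row-sum characterization of $\opnorm{\cdot}_\infty$, index rows and columns of $A\otimes B$ by pairs, factor the absolute row sum as $\bigl(\sum_j |A_{ij}|\bigr)\bigl(\sum_\ell |B_{k\ell}|\bigr)$, and then split the maximum over the pair into a product of maxima. The only difference is that you make the nonnegativity justification for splitting the max explicit, which the paper leaves implicit.
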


\begin{proof}
Using the definition of the Kronecker product, we have
\begin{align*}
\opnorm{A \otimes B}_\infty = \max_{s,t} \sum_{u,v} |A_{su} B_{tv}| \;
= \max_{s,t} \sum_{u,v} |A_{su}| \; |B_{tv}| & = \max_{s,t}
\left(\sum_u |A_{su}|\right) \; \left(\sum_v |B_{tv}|\right) \\
& = \opnorm{A}_\infty \cdot \opnorm{B}_\infty,
\end{align*}
as claimed.
\end{proof}

\begin{lem*}
\label{LemKronProd}
Let $A$ and $B$ be matrices of the same dimension. Then
\begin{equation*}
  \opnorm{A \otimes A - B \otimes B}_\infty \le \opnorm{A -
    B}_\infty^2 + 2 \min\{\opnorm{A}_\infty, \opnorm{B}_\infty\}
  \cdot \opnorm{A-B}_\infty.
\end{equation*}
\end{lem*}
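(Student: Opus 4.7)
The plan is to reduce the lemma to Lemma~\ref{LemKronInfty} (multiplicativity of $\opnorm{\cdot}_\infty$ under Kronecker products) via a telescoping identity. The natural first step is to add and subtract a mixed Kronecker product so that each resulting term has $A-B$ as one of its factors, after which the multiplicativity result immediately produces an expression in $\opnorm{A-B}_\infty$, $\opnorm{A}_\infty$, and $\opnorm{B}_\infty$.

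Concretely, I will use the symmetric three-term telescoping identity
\begin{equation*}
A \otimes A - B \otimes B \; = \; (A-B) \otimes (A-B) \; + \; (A-B) \otimes B \; + \; B \otimes (A-B),
\end{equation*}
which is easily verified by expanding $(A-B)\otimes A + B\otimes (A-B)$. Applying the triangle inequality for $\opnorm{\cdot}_\infty$ together with Lemma~\ref{LemKronInfty} to each of the three summands yields
\begin{equation*}
\opnorm{A \otimes A - B \otimes B}_\infty \;\le\; \opnorm{A-B}_\infty^2 + 2\,\opnorm{B}_\infty\,\opnorm{A-B}_\infty.
\end{equation*}
The same argument with the roles of $A$ and $B$ interchanged (i.e., using $A \otimes A - B \otimes B = (A-B)\otimes(A-B) + A \otimes (A-B) + (A-B) \otimes A$, up to a sign adjustment) gives the corresponding bound with $\opnorm{A}_\infty$ in place of $\opnorm{B}_\infty$. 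Taking the smaller of the two bounds produces the claimed minimum.

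There is no real obstacle here: the only subtlety is choosing the three-term decomposition (rather than the more common two-term split $A\otimes(A-B) + (A-B)\otimes B$), since that two-term split would yield a weaker bound $(\opnorm{A}_\infty + \opnorm{B}_\infty)\opnorm{A-B}_\infty$ rather than the desired one. Alternatively, one could start from the two-term split and then bound $\opnorm{A}_\infty \le \opnorm{B}_\infty + \opnorm{A-B}_\infty$ (or vice versa) via the triangle inequality to recover exactly the same final estimate; either route is routine.
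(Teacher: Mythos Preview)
Your proof is correct and essentially identical to the paper's: the paper uses the same three-term identity $A\otimes A - B\otimes B = (A-B)\otimes(A-B) + B\otimes(A-B) + (A-B)\otimes B$, applies the triangle inequality together with Lemma~\ref{LemKronInfty}, and then invokes symmetry to replace $\opnorm{B}_\infty$ by $\opnorm{A}_\infty$ and take the minimum.
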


\begin{proof}
Note that $A \otimes A - B \otimes B = (A-B) \otimes (A-B) + B \otimes
(A-B) + (A-B) \otimes B$.  By the triangle inequality and
Lemma~\ref{LemKronInfty}, we then have
\begin{equation*}
\opnorm{A \otimes A - B \otimes B}_\infty \le \opnorm{A-B}_\infty^2 +
2 \opnorm{B}_\infty \cdot \opnorm{A-B}_\infty.
\end{equation*}
By symmetry, the same bound holds with the roles of $A$ and $B$
reversed, from which the claim follows.
\end{proof}

%%%%%%%%%%%

\bibliography{new_refs}

\end{document}